\theoremstyle{plain}
\newtheorem{thm}{Theorem}[section]
\newtheorem{prop}[thm]{Proposition}
\newtheorem{lem}[thm]{Lemma}
\newtheorem{cor}[thm]{Corollary}
\theoremstyle{definition}
\newtheorem{dfn}[thm]{Definition}
\newtheorem{rem}[thm]{Remark}
\numberwithin{equation}{section}
\renewenvironment{proof}[1][\proofname]{\par
  \pushQED{\qed}%
  \normalfont \topsep6\p@\@plus6\p@\relax
  \trivlist
  \item[\hskip\labelsep
	\bfseries
    #1\@addpunct{.}]\ignorespaces
}{%
  \popQED\endtrivlist\@endpefalse
}
\DeclareMathOperator{\Hom}{Hom}
\DeclareMathOperator{\End}{End}
\DeclareMathOperator{\Aut}{Aut}
\DeclareMathOperator{\Ker}{Ker}
\DeclareMathOperator{\Ima}{Im}
\DeclareMathOperator{\Res}{Res}
\DeclareMathOperator{\ev}{ev}
\DeclareMathOperator{\id}{id}
\DeclareMathOperator{\ad}{ad}
\DeclareMathOperator{\ch}{ch}
\DeclareMathOperator{\Ann}{Ann}
\newcommand{\ve}{\varepsilon}
\newcommand{\bbZ}{\mathbb{Z}}
\newcommand{\bbC}{\mathbb{C}}
\newcommand{\frakS}{\mathfrak{S}}
\newcommand{\frg}{\mathfrak{g}}
\newcommand{\frh}{\mathfrak{h}}
\newcommand{\frt}{\mathfrak{t}}
\newcommand{\frs}{\mathfrak{s}}
\newcommand{\tor}{\mathfrak{g}_{\mathrm{tor}}}
\newcommand{\aff}{\mathfrak{g}_{\mathrm{aff}}}
\newcommand{\affh}{\mathfrak{h}_{\mathrm{aff}}}
\newcommand{\affn}{\mathfrak{n}_{\mathrm{aff}}}
\newcommand{\affnbar}{\bar{\mathfrak{n}}_{\mathrm{aff}}}
\newcommand{\torh}{\mathfrak{a}_{\mathrm{tor}}}
\newcommand{\torn}{\mathfrak{n}_{\mathrm{tor}}}
\newcommand{\tornbar}{\bar{\mathfrak{n}}_{\mathrm{tor}}}
\newcommand{\affI}{I_{\mathrm{aff}}}
\newcommand{\affQ}{Q_{\mathrm{aff}}}
\newcommand{\affDelta}{\Delta_{\mathrm{aff}}}
\newcommand{\affF}{\mathcal{F}_{\mathrm{aff}}}
\newcommand{\calH}{\mathcal{H}}
\newcommand{\glob}{W_{\mathrm{glob}}}
\newcommand{\loc}{W_{\mathrm{loc}}}
\newcommand{\bbV}{\mathbb{V}}
\newcommand{\bfv}{\mathbf{v}}
\title{Level one Weyl modules for toroidal Lie algebras}
\author{Ryosuke Kodera}
\date{}
\let\@old@@maketitle=\@maketitle
\def\@maketitle{%
\footnotetext{%
\hspace*{-1em}\hspace*{-\footnotesep}%
E-mail address: kodera@math.s.chiba-u.ac.jp
}
\@old@@maketitle
}
\begin{document}
\maketitle

\begin{abstract}
We identify level one global Weyl modules for toroidal Lie algebras with certain twists of modules constructed by Moody-Eswara Rao-Yokonuma via vertex operators for type ADE and by Iohara-Saito-Wakimoto and Eswara Rao for general type.
The twist is given by an action of $\mathrm{SL}_{2}(\mathbb{Z})$ on the toroidal Lie algebra.
As a byproduct, we obtain a formula for the character of the level one local Weyl module over the toroidal Lie algebra and that for the graded character of the level one graded local Weyl module over an affine analog of the current Lie algebra.
\end{abstract}

\section{Introduction}

\subsection{Motivation}\label{subsection:1.1}

We study global/local Weyl modules for toroidal Lie algebras and an affine analog of current Lie algebras.
The notion of Weyl modules for affine Lie algebras has been introduced by Chari-Pressley in \cite{MR1850556} as a family of integrable highest weight modules with a universal property.
Later Chari-Loktev initiated in \cite{MR2271991} to study Weyl modules for current Lie algebras in a graded setting.
The graded characters of local Weyl modules for current Lie algebras have been studied by many authors.
Now they are known to coincide with Macdonald polynomials specialized at $t=0$, a.k.a.\ $q$-Whittaker functions (Chari-Loktev~\cite{MR2271991}, Fourier-Littelmann~\cite{MR2323538}, Naoi~\cite{MR2855081}, Sanderson~\cite{MR1771615}, Ion~\cite{MR1953294}, Lenart-Naito-Sagaki-Schilling-Shimozono~\cite{MR3674171}).

Toroidal Lie algebras are natural generalization of affine Lie algebras.
For a finite-dimensional simple Lie algebra $\frg$, the corresponding toroidal Lie algebra $\tor$ is defined as the universal central extension of the double loop Lie algebra $\frg \otimes \bbC[s^{\pm 1}, t^{\pm 1}]$ with the degree operators.
We can also consider a Lie algebra $\tor^+$ which is defined by replacing $\bbC[s^{\pm 1}, t^{\pm 1}]$ with $\bbC[s, t^{\pm 1}]$.
See Section~\ref{subsection:toroidal} for precise definitions.
We expect that the characters of Weyl modules for $\tor$ and $\tor^+$ produce a very interesting class of special functions.
In this article, we study the first nontrivial example: the Weyl module associated with the level one dominant integral weight.

A big difference between the toroidal and the affine Lie algebra is the structure of their centers.
The toroidal Lie algebra without the degree operators has an infinite-dimensional center, while the center of the affine Lie algebra is one-dimensional.
The Weyl modules are examples of modules over the toroidal Lie algebra on which the action of the center does not factor a finite-dimensional quotient.
We note that Chari-Le have studied in \cite{MR2017585} local Weyl modules for a quotient of the toroidal Lie algebra.
The resulting quotient is an extension of the double loop Lie algebra by a two-dimensional center with the degree operators.
In particular, the Weyl modules considered in this article are possibly bigger than those studied in \cite{MR2017585} (See \ref{subsection:1.3} below).

\subsection{Outline}\label{subsection:1.2}

Let us summarize contents and results of the article.
In Section~\ref{section:Preliminaries}, we introduce the main object: the toroidal Lie algebra $\tor$.
We also introduce an affine analog of the current Lie algebra which is denoted by $\tor^+$.
Then we recall their basic properties.
Among other things, a certain automorphism of $\tor$ will play an important role.
The ring $\bbC[s^{\pm 1}, t^{\pm 1}]$ admits an $\mathrm{SL}_2(\mathbb{Z})$-action by the coordinate change.
This action naturally induces automorphisms of $\tor$.
We denote by $S$ the automorphism corresponding to the $S$-transformation.

In Section~\ref{section:Weyl modules}, we define the global and the local Weyl modules following \cite{MR1850556}, \cite{MR2271991}, \cite{MR2102326}, \cite{MR2718936}, \cite{MR2017585}.
The global Weyl module $\glob(\Lambda)$ for $\tor$ is attached to each dominant integral weight $\Lambda$ of the affine Lie algebra. 
We identify the endomorphism ring of $\glob(\Lambda)$ with a symmetric Laurent polynomial ring $A(\Lambda)$ in Proposition~\ref{prop:endomorphism} and define the local Weyl module $\loc(\Lambda,\mathbf{a})$ for each maximal ideal $\mathbf{a}$ of $A(\Lambda)$.
The argument is similar to known one for the affine and the current Lie algebras.
The global/local Weyl modules $\glob^+(\Lambda)$ and $\loc^+(\Lambda,\mathbf{a})$ for $\tor^+$ are similarly defined.
We prove in Proposition~\ref{prop:weight} a finiteness property for weight spaces of the Weyl modules.
By this property, the characters of the local Weyl modules are well-defined.
This result has been established for the case of the affine Lie algebra in \cite{MR1850556} and for a quotient of the toroidal Lie algebra in \cite{MR2017585}. 
We remark that we need to investigate the action of the infinite-dimensional center, which is not treated in \cite{MR2017585}.
Then we turn to a special case where $\Lambda$ is of level one.
By the diagram automorphism, we can reduce the general level one case to that for the basic level one weight $\Lambda_0$.
Therefore we only consider the case of $\Lambda_0$ in the sequel.
We give an upper bound for the graded character of the level one local Weyl module $\loc^+(\Lambda_0,0)$ over $\tor^+$ in Proposition~\ref{prop:upper_bound}.

In Section~\ref{section:Vertex operator construction}, we prove an isomorphism between the level one global Weyl module $\glob(\Lambda_0)$ over the toroidal Lie algebra $\tor$ and the twist of a module $\bbV(0)$ by the automorphism $S^{-1}$, where $\bbV(0)$ has been constructed in works of Moody-Eswara Rao-Yokonuma~\cite{MR1066569}, Iohara-Saito-Wakimoto~\cite{MR1688100} and Eswara Rao \cite{MR3076215}.
This is our main theorem.

\begin{thm}[Theorem~\ref{thm:main}]
We have an isomorphism
\[
	\glob(\Lambda_0) \stackrel{\cong}{\longrightarrow} (S^{-1})^*\bbV(0)
\]
of $\tor$-modules.
\end{thm}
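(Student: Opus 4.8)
The plan is to construct the map from the universal property of the global Weyl module and then to prove bijectivity by a character count against Proposition~\ref{prop:upper_bound}. First I would identify inside $(S^{-1})^{*}\bbV(0)$ the vector that should correspond to the cyclic generator of $\glob(\Lambda_0)$. By construction $\bbV(0)$ is a highest weight module for the affine subalgebra of $\tor$ sitting in one of the two loop directions, with cyclic vacuum vector $\bfv$. The automorphism $S$ interchanges the two loop variables $s$ and $t$ (up to sign), hence interchanges the two affine subalgebras of $\tor$; consequently, in the twist $(S^{-1})^{*}\bbV(0)$ the vector $\bfv$ becomes highest weight for the affine subalgebra entering the definition of $\glob(\Lambda_0)$. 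Explicitly, I would verify that $\bfv$ is annihilated by $\affn$, is an $\affh$-eigenvector of weight $\Lambda_0$, and satisfies the integrability relations $f_i^{\langle \Lambda_0, \alpha_i^{\vee}\rangle + 1}\bfv = 0$ for all simple roots $i$; at level one these reduce to $f_i\bfv = 0$ for $i \ne 0$ and $f_0^{2}\bfv = 0$.

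Granting these relations, the universal property of $\glob(\Lambda_0)$ produces a $\tor$-module homomorphism $\Phi \colon \glob(\Lambda_0) \to (S^{-1})^{*}\bbV(0)$ sending the generator to $\bfv$. Since $\bfv$ generates $\bbV(0)$ over $\tor$, which is built into the vertex operator construction, $\Phi$ is surjective, and the whole problem reduces to injectivity.

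For injectivity I would work over the ring $A(\Lambda_0) = \End_{\tor}(\glob(\Lambda_0))$ of Proposition~\ref{prop:endomorphism}, a Laurent polynomial ring in one variable. The $A(\Lambda_0)$-action on $(S^{-1})^{*}\bbV(0)$ is realized by a family of commuting loop operators on $\bbV(0)$, and $\Phi$ is $A(\Lambda_0)$-linear for this identification. Specializing at a maximal ideal $\mathbf{a}$ gives a surjection $\loc(\Lambda_0, \mathbf{a}) \twoheadrightarrow \bbV(0)_{\mathbf{a}}$ onto the corresponding fiber, whence $\ch \loc(\Lambda_0, \mathbf{a}) \ge \ch \bbV(0)_{\mathbf{a}}$. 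The explicit Fock space realization of $\bbV(0)$ lets me compute $\ch \bbV(0)_{\mathbf{a}}$ directly, while Proposition~\ref{prop:upper_bound}, together with the comparison between the local Weyl module over $\tor$ and the graded local Weyl module over $\tor^{+}$, supplies the reverse inequality $\ch \loc(\Lambda_0, \mathbf{a}) \le \ch \bbV(0)_{\mathbf{a}}$. Equality then forces each fiberwise map $\Phi_{\mathbf{a}}$ to be an isomorphism; since the target $(S^{-1})^{*}\bbV(0)$ is a free $A(\Lambda_0)$-module in each weight space (manifest from the Fock space structure), a surjection whose kernel vanishes on every fiber must itself have zero kernel, so $\Phi$ is an isomorphism.

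The main obstacle is the exact character bookkeeping: one must track precisely how the vertex operators, the degree operators, and the infinite-dimensional center of $\tor$ transform under $S$, so that the Fock space character of the fiber $\bbV(0)_{\mathbf{a}}$ matches the combinatorial upper bound term by term. I expect the verification of the integrability relation $f_0^{2}\bfv = 0$ (equivalently, local $\mathfrak{sl}_2$-nilpotency on $\bbV(0)$) and the control of the center under the twist to be the two most delicate points.
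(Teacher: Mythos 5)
Your proposal is correct and follows essentially the same route as the paper: verify the defining relations for $\bfv$ to get a surjection $\glob(\Lambda_0)\to(S^{-1})^*\bbV(0)$ via the universal property, match $\ch_p\loc(\Lambda_0,a)$ against $\ch_p\bbV_a$ using the upper bound of Proposition~\ref{prop:upper_bound} together with Propositions~\ref{prop:character} and \ref{prop:independent} and the Fock-space character of the fiber, and then pass from the fiberwise isomorphisms to the global one using freeness of the target over $A(\Lambda_0)$ (the paper phrases this last step as a splitting plus Nakayama's lemma). The delicate points you flag are exactly the ones the paper handles in Lemmas~\ref{lem:highest}--\ref{lem:surjection}.
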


As a byproduct, we prove that the upper bound in Proposition~\ref{prop:upper_bound} indeed gives the characters of the level one local Weyl modules (see Section~\ref{subsection:Characters} for the definition of $\ch_p$ and $\ch_{p,q}$).
\begin{cor}[Corollary~\ref{cor:character}]
We have
\[
	\ch_{p} \loc(\Lambda_0,a) = \ch_{p} \loc^+(\Lambda_0,a) = \ch_p L(\Lambda_0) \left( \prod_{n>0} \dfrac{1}{1-p^n} \right)
\]
for $a \in \bbC^{\times}$ and
\[
	\ch_{p,q} \loc^+(\Lambda_0,0) = \ch_p L(\Lambda_0) \left( \prod_{n>0} \dfrac{1}{1-p^n q} \right).
\]
Here $L(\Lambda_0)$ is the level one integrable irreducible module of the affine Lie algebra with highest weight $\Lambda_0$. 
\end{cor}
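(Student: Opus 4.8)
The plan is to obtain the asserted characters by squeezing them between the upper bound of Proposition~\ref{prop:upper_bound} and a lower bound read off from the main theorem. First I would use Theorem~\ref{thm:main} to determine the character of the global Weyl module. Because $\glob(\Lambda_0)$ is isomorphic to $(S^{-1})^{*}\bbV(0)$ and $\bbV(0)$ is constructed explicitly by vertex operators, its character and, crucially, its module structure over the endomorphism ring $A(\Lambda_0)$ are accessible: the vertex construction exhibits $\bbV(0)$ as a free $A(\Lambda_0)$-module whose generic fibre is $L(\Lambda_0)$ tensored with the bosonic Fock space of one family of Heisenberg modes, contributing the factor $\prod_{n>0}(1-p^{n})^{-1}$. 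The twist by $S^{-1}$ only interchanges the two degree operators, so I would track it carefully as a change of grading in order to match the normalizations of $\ch_{p}$ and $\ch_{p,q}$.

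Granting freeness, for $a\in\bbC^{\times}$ the local Weyl module $\loc(\Lambda_0,a)=\glob(\Lambda_0)\otimes_{A(\Lambda_0)}\bbC_{a}$, where $\bbC_{a}$ denotes the residue field at the point $a$, has character equal to the $A(\Lambda_0)$-rank of its weight spaces, namely $\ch_{p}L(\Lambda_0)\prod_{n>0}(1-p^{n})^{-1}$. The identification $\loc(\Lambda_0,a)\cong\loc^{+}(\Lambda_0,a)$ for such $a$ follows by observing that localizing the coordinate ring away from the origin inverts $s$, so that the $\tor$- and $\tor^{+}$-actions coincide on the fibre. This settles the first displayed equality.

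For the graded module at the origin I would regard the modules $\loc^{+}(\Lambda_0,a)$ as the fibres of a single family over the spectrum of the endomorphism ring of $\glob^{+}(\Lambda_0)$ and invoke upper semicontinuity of fibre dimension: each weight space of the special fibre $\loc^{+}(\Lambda_0,0)$ has dimension at least that of a generic fibre $\loc^{+}(\Lambda_0,a)$ with $a\neq 0$, whose character has just been computed. Hence the ungraded character of $\loc^{+}(\Lambda_0,0)$ is bounded below by $\ch_{p}L(\Lambda_0)\prod_{n>0}(1-p^{n})^{-1}$, which is exactly the value of the bound in Proposition~\ref{prop:upper_bound} after the substitution $q=1$. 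The two bounds therefore coincide and all intermediate inequalities become equalities. The graded statement follows formally: the bound of Proposition~\ref{prop:upper_bound} dominates $\ch_{p,q}\loc^{+}(\Lambda_0,0)$ coefficientwise, and since they agree after setting $q=1$, every coefficient is already equal, which yields the second displayed equality.

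The step I expect to be the main obstacle is making the specialization exact: one must show that the weight spaces of $\glob(\Lambda_0)$ are free (or at least torsion-free) over $A(\Lambda_0)$, and one must pin down precisely how the $A(\Lambda_0)$-action sits inside the vertex operator realization of $\bbV(0)$ once it has been twisted by $S^{-1}$, since this is what turns the bare character of $\bbV(0)$ into an exact character for the fibres rather than a mere inequality. Once $\bbV(0)$ is understood as a free $A(\Lambda_0)$-module with the Fock space as generic fibre and the grading has been transported correctly through $S^{-1}$, the degeneration to the origin and the passage to the graded character are purely formal.
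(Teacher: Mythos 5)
Your overall architecture --- squeezing the characters between the upper bound of Proposition~\ref{prop:upper_bound} and the lower bound coming from the vertex operator realization via Theorem~\ref{thm:main}, then deducing the $(p,q)$-statement from the $q=1$ specialization --- is the paper's route, and the semicontinuity argument you substitute for the paper's translation automorphism $\tau_a$ (Proposition~\ref{prop:independent}) is a workable variant for generic $a$. But there is a genuine gap where you claim that the identification $\loc(\Lambda_0,a)\cong\loc^{+}(\Lambda_0,a)$ for $a\in\bbC^{\times}$ ``follows by observing that localizing the coordinate ring away from the origin inverts $s$.'' Neither local Weyl module is obtained from the other by localization: $\glob^{+}(\Lambda_0)$ and $\glob(\Lambda_0)$ are presented by different relations ($e_{i,k}v=0$ only for $k\ge 0$ in the former, for all $k\in\bbZ$ in the latter, together with $c_sv=d_sv=0$), so a priori one has only a surjection $\loc^{+}(\Lambda_0,a)\twoheadrightarrow\Res\loc(\Lambda_0,a)$ of $\tor^{+}$-modules --- and even that surjection is not free of charge, since it requires $\loc(\Lambda_0,a)=U(\tor^{+})v_{\Lambda_0,a}$, which is Proposition~\ref{prop:weight}~(iii) and rests on the spanning results of Lemma~\ref{lem:single}. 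Without the upper bound $\ch_p\loc^{+}(\Lambda_0,a)\le\ch_pL(\Lambda_0)\prod_{n>0}(1-p^n)^{-1}$, obtained in the paper by combining Propositions~\ref{prop:independent} and~\ref{prop:upper_bound}, you cannot exclude that $\loc^{+}(\Lambda_0,a)$ is strictly larger than $\loc(\Lambda_0,a)$; ruling this out is precisely part of what the corollary asserts, so the localization heuristic begs the question.

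The fix is to replace your claimed isomorphism by the inequality $\ch_p\loc^{+}(\Lambda_0,a)\ge\ch_p\loc(\Lambda_0,a)$ of Proposition~\ref{prop:character} and let the same squeeze you already deploy for the graded statement close both inequalities at once. Note also that your semicontinuity step only controls $\ch_p\loc^{+}(\Lambda_0,a)$ for generic $a$, whereas the corollary is asserted for every $a\in\bbC^{\times}$; either the automorphism $s\mapsto s+a$ of Proposition~\ref{prop:independent} or running the squeeze separately for each $a$ is still needed to cover all points.
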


\subsection{Related works}\label{subsection:1.3}

Let us give two comments regarding other works.
The first one is for \cite{MR2017585} mentioned earlier.
In \cite{MR2017585}, Chari-Le have studied local Weyl modules for some quotients of $\tor$ and $\tor^+$.
They have proved that the level one local Weyl modules in their setting are irreducible and are isomorphic to the evaluation modules \cite[Theorem~4]{MR2017585}.
Hence we see by our results that the level one local Weyl modules for $\tor$ and $\tor^+$ are bigger than those studied in \cite{MR2017585}.
We remark that one of our results (Proposition~\ref{prop:upper_bound}) gives an alternative proof of \cite[Theorem~4]{MR2017585}.

The second one is for \cite{MR3908899}.
In \cite[Theorem~3.8]{MR3908899}, Tsymbaliuk has proved that the level one Fock representation of Saito-Takemura-Uglov \cite{MR1603798} and Feigin-Jimbo-Miwa-Mukhin \cite{MR3023228} over the quantum toroidal algebra of type A is isomorphic to a twist of the vertex representation of Saito \cite{MR1617066}.
Here the twist is given by an automorphism analogous to $S^{-1}$ which has been constructed by Miki \cite{MR1693755}.
This result motivated the present work.
In the situation of \cite{MR3908899}, both the Fock and the vertex representations are known to be irreducible and hence it can be checked by comparing their highest weights to show the isomorphism.
Thus, although the calculation of $S^{-1}$ in the quantum toroidal case is much more involved, the argument to show the isomorphism is simple.
It is an interesting problem to establish results analogous to this article for quantum toroidal algebras and affine Yangians.

\subsection*{Acknowledgments}
The author is grateful to Ryo Sato who pointed out that the result of \cite{MR3076215} can be used to improve this work.
He also would like to thank Yoshihisa Saito and Kentaro Wada for helpful discussion. 
This work was supported by JSPS KAKENHI Grant Number 17H06127 and 18K13390.

\section{Preliminaries}\label{section:Preliminaries}

\subsection{Simple Lie algebras}

Let $\frg$ be a finite-dimensional simple Lie algebra over $\bbC$ with a fixed Cartan subalgebra $\frh$.
We also fix a Borel subalgebra containing $\frh$.
The index set of simple roots is denoted by $I$.
Let $\alpha_i$ ($i \in I$) be simple roots.
We denote by $\Delta$, $\Delta^+$, $\Delta^-$ the sets of roots, positive roots, negative roots, respectively.
Let $\frg_{\alpha}$ ($\alpha \in \Delta)$ be the corresponding root space and put $\frg_0 = \frh$.
The highest root is denoted by $\theta$.

Let $(\,,\,)$ be a nondegenerate invariant symmetric bilinear form on $\frg$.
We denote by the same letter the bilinear form on $\frh^*$ induced from $(\,,\,)$ and normalize them by $(\theta,\theta)=2$.
Put $d_i = (\alpha_i,\alpha_i)/2$.
We fix Chevalley generators $e_i, f_i, h_i$ ($i \in I$) so that $(e_i,f_i)=d_i^{-1}$ and $h_i = [e_i,f_i]$.
We also fix root vectors $e_{\theta} \in \frg_{\theta}$ and $f_{\theta} \in \frg_{-\theta}$ so that $(e_{\theta},f_{\theta})=1$.
We denote by $h_{\alpha} \in \frh$ the coroot corresponding to $\alpha \in \Delta$.
The root lattice $Q$ is defined by $Q=\bigoplus_{i \in I} \bbZ \alpha_i$.

\subsection{Toroidal Lie algebras}\label{subsection:toroidal}

The universal central extension of the Lie algebra $\frg \otimes \bbC[s^{\pm 1},t^{\pm 1}]$ is given by
\[
	\frg \otimes \bbC[s^{\pm 1},t^{\pm 1}] \oplus \Omega_{\bbC[s^{\pm 1},t^{\pm 1}]} / \Ima d.
\]
Here $\Omega_A$ for a commutative $\bbC$-algebra $A$ denotes the module of differentials, and $d \colon A \to \Omega_A$ the differential map.
The Lie bracket is given by
\[
	[x \otimes a, y \otimes b] = [x,y] \otimes ab + (x,y) (da)b.
\]
See \cite[Section~2]{MR1066569} for details.

We put
\[
	c(k,l) = \begin{cases}
		s^k t^{l-1} dt & \text{if } k \neq 0,\\
		s^{-1} t^l ds & \text{if } k = 0
	\end{cases}
\]
for $(k,l) \in \bbZ^2 \setminus \{(0,0)\}$ and $c_s = s^{-1} ds$, $c_t = t^{-1} dt$. 
Then $\Omega_{\bbC[s^{\pm 1},t^{\pm 1}]} / \Ima d$ has a $\bbC$-basis $c(k,l)$ with $(k,l) \in \bbZ^2 \setminus \{(0,0)\}$, $c_s$, $c_t$.
We can explicitly describe the Lie bracket as follows:
\begin{equation}
	\begin{split}
		&[x \otimes s^k t^l, y \otimes s^m t^n] \\
		&= \begin{cases}
			[x,y] \otimes s^{k+m} t^{l+n} + (x,y) \dfrac{lm-kn}{k+m} c(k+m,l+n) & \text{if } k+m \neq 0,\\
			[x,y] \otimes t^{l+n} + (x,y) k c(0,l+n) & \text{if } k+m = 0 \text{ and } l+n \neq 0,\\
			[x,y] \otimes 1 + (x,y) ( k c_s + l c_t ) & \text{if } k+m = 0 \text{ and } l+n = 0.
		\end{cases}\label{eq:bracket}
	\end{split}
\end{equation}
We add the degree operators $d_s$, $d_t$ to this central extension and define the toroidal Lie algebra $\tor$ by
\[
	\tor = \frg \otimes \bbC[s^{\pm 1},t^{\pm 1}] \oplus \bigoplus_{(k,l) \in \bbZ^2 \setminus \{(0,0)\}} \bbC c(k,l) \oplus \bbC c_s \oplus \bbC c_t \oplus \bbC d_s \oplus \bbC d_t,
\]
where the additional commutation relations are as follows:
\begin{gather*}
	[d_s, x \otimes s^k t^l] = k x \otimes s^k t^l, \quad [d_t, x \otimes s^k t^l] = l x \otimes s^k t^l, \\
	[d_s, c(k,l)] = k c(k,l), \quad [d_t, c(k,l)] = l c(k,l),\\
	[d_s,c_s]=[d_t,c_s]=[d_s,c_t]=[d_t,c_t]=[d_s,d_t]=0.
\end{gather*}

\begin{rem}
Note that we have
\[
	c(k,l) = \begin{cases}
		(-k/l) s^{k-1} t^{l} ds & \text{if } k \neq 0,\\
		s^{-1} t^l ds & \text{if } k = 0
	\end{cases}
\]
for $l \neq 0$.
In particular, $c(k+1,l)$ is a nonzero multiple of $s^{k} t^{l} ds$ if $l \neq 0$. 
We will use this fact throughout the article.
\end{rem}

Let $\tor'$ be the Lie subalgebra of $\tor$ without $d_s$:
\[
	\tor' = \frg \otimes \bbC[s^{\pm 1},t^{\pm 1}] \oplus \bigoplus_{(k,l) \in \bbZ^2 \setminus \{(0,0)\}} \bbC c(k,l) \oplus \bbC c_s \oplus \bbC c_t \oplus \bbC d_t.
\]
We also consider the following Lie subalgebra $\tor^+$ of $\tor$:
\[
	\tor^+ = \frg \otimes \bbC[s,t^{\pm 1}] \oplus \bigoplus_{\substack{k \geq 1\\l \in \bbZ}} \bbC c(k,l) \oplus \bbC c_t \oplus \bbC d_t.
\]
The Lie algebra $\tor^+$ is the semidirect product of the universal central extension of $\frg \otimes \bbC[s,t^{\pm 1}]$ and the one-dimensional abelian Lie algebra $\bbC d_t$.
It is an affine analog of the current Lie algebra $\frg \otimes \bbC[s]$ and has a $\bbZ_{\geq 0}$-graded Lie algebra structure by assigning
\[
	\deg (x \otimes s^k t^l) = k \ (x \in \frg),\quad \deg c(k,l) = k \ (k \geq 1, l \in \bbZ),\quad \deg c_t = \deg d_t = 0.
\]

\begin{rem}
Later we will study graded $\tor^+$-modules.
It is equivalent to considering modules of $\tor^+ \oplus \bbC d_s$. 
\end{rem}

The toroidal Lie algebra $\tor$ contains two Lie subalgebras $\aff^{(s)}$ and $\aff^{(t)}$ isomorphic to the affine Lie algebra associated with $\frg$:
\[
	\aff^{(s)} = \frg \otimes \bbC[s^{\pm 1}] \oplus \bbC c_s \oplus \bbC d_s, \quad \aff^{(t)} = \frg \otimes \bbC[t^{\pm 1}] \oplus \bbC c_t \oplus \bbC d_t.
\]
Note that $\tor^+$ contains $\aff^{(t)}$.
We have
\[
	\tor = \left(\aff^{(t)}\right)' \otimes \bbC[s^{\pm 1}] \oplus \bigoplus_{\substack{k \in \bbZ\\l \neq 0}} \bbC c(k,l) \oplus \bbC c_s \oplus \bbC d_s \oplus \bbC d_t,
\]
\[
	\tor^+ = \left(\aff^{(t)}\right)' \otimes \bbC[s] \oplus \bigoplus_{\substack{k \geq 1\\l \neq 0}} \bbC c(k,l) \oplus \bbC d_t,
\]
where $\left(\aff^{(t)}\right)' = \frg \otimes \bbC[t^{\pm 1}] \oplus \bbC c_t$.
Here, the elements $c(k,0)=s^k t^{-1} dt$ are regarded as $c_t \otimes s^k \in \left(\aff^{(t)}\right)' \otimes s^k$.

\begin{rem}\label{rem:CL}
Chari-Le~\cite{MR2017585} have studied a version of toroidal Lie algebras which is the quotient of $\tor$ modulo the elements $c(k,l)$ with $l \neq 0$, namely, it is equal to
\[
	\frg \otimes \bbC[s^{\pm 1},t^{\pm 1}] \oplus \bigoplus_{k \neq 0} \bbC c(k,0) \oplus \bbC c_s \oplus \bbC c_t \oplus \bbC d_s \oplus \bbC d_t
	=\left(\aff^{(t)}\right)' \otimes \bbC[s^{\pm 1}] \oplus \bbC c_s \oplus \bbC d_s \oplus \bbC d_t
\]
as a $\bbC$-vector space.
\end{rem}

We introduce presentations of $\tor$ and $\tor^+$.
Put $\affI = I \sqcup \{0\}$.
Let $(a_{ij})_{i,j \in \affI}$ be the Cartan matrix of $\aff^{(t)}$ and set $d_0 = 1$.
\begin{dfn}
Let $\frt$ be the Lie algebra generated by $e_{i,k}$, $f_{i,k}$, $h_{i,k}$ ($i \in \affI$, $k \in \bbZ$), $c_s$, $d_s$, $d_t$ subject to the following defining relations:
\begin{gather*}
	c_s :\text{central}, \quad [h_{i,k},h_{j,l}]=d_j^{-1} a_{ij} k \delta_{k+l,0} c_s, \quad [e_{i,k},f_{j,l}]=\delta_{ij} \left( h_{i,k+l} + d_i^{-1} k \delta_{k+l,0} c_s \right),\\
	[h_{i,k},e_{j,l}] = a_{ij} e_{j,k+l}, \quad [h_{i,k},f_{j,l}] = -a_{ij} f_{j,k+l},\\
	[e_{i,k},e_{i,l}] = 0, \quad [f_{i,k},f_{i,l}] = 0,\\
	(\ad e_{i,0})^{1-a_{ij}} e_{j,k} = 0, \quad (\ad f_{i,0})^{1-a_{ij}} f_{j,k} = 0, \quad (i \neq j)\\
	[d_s, e_{i,k}] = k e_{i,k}, \quad [d_s, f_{i,k}] = k f_{i,k}, \quad [d_s, h_{i,k}] = k h_{i,k},\\
	[d_t, e_{i,k}] = \delta_{i,0} e_{i,k}, \quad [d_t, f_{i,k}] = -\delta_{i,0} f_{i,k}, \quad [d_t, h_{i,k}] = 0,\\
	[d_s,d_t]=0.
\end{gather*}
\end{dfn}

\begin{dfn}
Let $\frs$ be the Lie algebra generated by $e_{i,k}$, $f_{i,k}$, $h_{i,k}$ ($i \in \affI$, $k \in \bbZ_{\geq 0}$), $d_t$ subject to the following defining relations:
\begin{gather*}
	[h_{i,k},h_{j,l}]=0, \quad [e_{i,k},f_{j,l}]=\delta_{ij} h_{i,k+l},\\
	[h_{i,k},e_{j,l}] = a_{ij} e_{j,k+l}, \quad [h_{i,k},f_{j,l}] = -a_{ij} f_{j,k+l},\\
	[e_{i,k},e_{i,l}] = 0, \quad [f_{i,k},f_{i,l}] = 0,\\
	(\ad e_{i,0})^{1-a_{ij}} e_{j,k} = 0, \quad (\ad f_{i,0})^{1-a_{ij}} f_{j,k} = 0, \quad (i \neq j)\\
	[d_t, e_{i,k}] = \delta_{i,0} e_{i,k}, \quad [d_t, f_{i,k}] = -\delta_{i,0} f_{i,k}, \quad [d_t, h_{i,k}] = 0.
\end{gather*}
\end{dfn}

\begin{thm}[\cite{MR1066569} Proposition~3.5, \cite{MR4014633} Proposition~4.4]
We have an isomorphism of Lie algebras $\frt \to \tor$ such that
\begin{gather*}
	e_{i,k} \mapsto \begin{cases}
		e_i \otimes s^k & \text{if } i \in I, \\
		f_{\theta} \otimes s^k t & \text{if } i =0,
	\end{cases}\quad 
	f_{i,k} \mapsto \begin{cases}
		f_i \otimes s^k & \text{if } i \in I, \\
		e_{\theta} \otimes s^k t^{-1} & \text{if } i =0,
	\end{cases}\\
	h_{i,k} \mapsto \begin{cases}
		h_i \otimes s^k & \text{if } i \in I, \\
		-h_{\theta} \otimes s^k + s^k t^{-1} dt & \text{if } i =0,
	\end{cases}\quad c_s \mapsto c_s,\quad d_s \mapsto d_s,\quad d_t \mapsto d_t.
\end{gather*}
Moreover this restricts to an isomorphism $\frs \to \tor^+$.
\end{thm}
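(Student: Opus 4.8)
The plan is to define the map $\phi\colon \frt \to \tor$ on generators by the displayed formulas and to verify in three stages that it is a well-defined isomorphism, after which the statement for $\frs$ and $\tor^+$ follows by restricting the same construction to the subalgebra generated by the nonnegative-mode generators.

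First I would check that $\phi$ is a homomorphism, i.e.\ that the images of the generators satisfy the defining relations of $\frt$. The relations involving $d_s$ and $d_t$ are immediate from the action of the degree operators, since $e_i\otimes s^k$, $f_\theta\otimes s^kt$, and the others are homogeneous of the indicated bidegrees. The remaining relations reduce to the bracket formula \eqref{eq:bracket}, and the only delicate points are the central terms: one must confirm $[h_{i,k},h_{j,l}]=d_j^{-1}a_{ij}k\delta_{k+l,0}c_s$ and $[e_{i,k},f_{j,l}]=\delta_{ij}(h_{i,k+l}+d_i^{-1}k\delta_{k+l,0}c_s)$. For $i,j\in I$ these follow from the identity $(h_i,h_j)=d_j^{-1}a_{ij}$ for the induced form on $\frh$, and for the index $0$ they follow from $[f_\theta,e_\theta]=-h_\theta$ and $(e_\theta,f_\theta)=1$ together with the normalizations $(\theta,\theta)=2$ and $d_0=1$; a short computation then gives $[e_{0,k},f_{0,-k}]=h_{0,0}+kc_s$, as required. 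The Serre relations are inherited from those of $\frg$ and of $\aff^{(t)}$.

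Next I would prove surjectivity. The elements $e_i\otimes s^k$ and $f_i\otimes s^k$ ($i\in I$, $k\in\bbZ$) generate $\frg\otimes\bbC[s^{\pm1}]$, and bracketing these with $f_\theta\otimes s^kt$ and $e_\theta\otimes s^kt^{-1}$ lets one climb in the $t$-degree to produce all of $\frg\otimes\bbC[s^{\pm1},t^{\pm1}]$. For the center, $c_s$ is a generator, $c_t=\phi(h_{0,0})+h_\theta\otimes1$ lies in the image, every $c(k,l)$ with $l\neq0$ arises as a bracket using the observation that $c(k+1,l)$ is a nonzero multiple of $s^kt^l\,ds$, and $c(k,0)=c_t\otimes s^k$ is recovered from $h_{0,k}$ together with $h_\theta\otimes s^k$; finally $d_s,d_t$ are themselves generators. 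Hence $\phi$ is onto.

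The main obstacle is injectivity, which I would handle through the universal central extension. Write $\mathfrak g=\frg\otimes\bbC[s^{\pm1},t^{\pm1}]$; this Lie algebra is perfect, and as recalled at the beginning of this section its universal central extension $\mathfrak u$ is exactly $\tor$ with $d_s,d_t$ removed. Let $\frt_0\subseteq\frt$ be the subalgebra generated by the $e_{i,k},f_{i,k},h_{i,k}$ and $c_s$; it is perfect (each generator lies in its own derived algebra, and $c_s=\tfrac12 d_i[h_{i,1},h_{i,-1}]$), and the surjectivity argument shows $\phi$ maps it onto $\mathfrak u$. The crux is the lemma that the composite $\frt_0\xrightarrow{\phi}\mathfrak u\to\mathfrak g$ exhibits $\frt_0$ as a \emph{central} extension of $\mathfrak g$; this is where the real work lies, since one must use the bigrading and the relations to rule out any noncentral element of the kernel. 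Granting it, the universal property of $\mathfrak u$ produces a morphism $\mathfrak u\to\frt_0$ over $\mathfrak g$ whose composite with $\phi|_{\frt_0}$ is an endomorphism of $\mathfrak u$ over $\mathfrak g$, hence the identity by uniqueness; therefore $\phi|_{\frt_0}\colon\frt_0\to\mathfrak u$ is injective and so an isomorphism. Finally $\frt=\frt_0\oplus\bbC d_s\oplus\bbC d_t$, because the brackets of $d_s,d_t$ with the remaining generators stay in $\frt_0$ and their images lie outside $\mathfrak u$; this upgrades the isomorphism to all of $\frt$. The restriction $\frs\to\tor^+$ then follows by the identical argument, replacing $\bbC[s^{\pm1},t^{\pm1}]$ by $\bbC[s,t^{\pm1}]$ and the index range by $k\geq0$, so that $c_s$ and $d_s$ drop out and one uses instead the universal central extension of $\frg\otimes\bbC[s,t^{\pm1}]$.
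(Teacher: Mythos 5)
The paper does not actually prove this theorem: it is imported wholesale from \cite{MR1066569} (Proposition~3.5) and \cite{GRW} (Proposition~4.4), so there is no internal proof to compare against and I can only judge your argument on its own terms. Your architecture is the standard one and is essentially that of the cited references: verify the relations to obtain a homomorphism $\phi$, generate everything to get surjectivity, and reduce injectivity to the universal property of the universal central extension $\mathfrak{u}$ of $\frg\otimes\bbC[s^{\pm 1},t^{\pm 1}]$ applied to the perfect subalgebra $\frt_0$. The relation checks (including the central terms, which you compute correctly), the surjectivity argument, the perfectness of $\frt_0$, and the formal splitting argument $\phi\circ\psi=\id_{\mathfrak{u}}$ are all sound.

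The genuine gap is the step you isolate and then ``grant'': that the kernel of the composite $\frt_0\to\mathfrak{u}\to\frg\otimes\bbC[s^{\pm 1},t^{\pm 1}]$ is central in $\frt_0$. This is not a routine verification to be deferred --- it is the actual mathematical content of the theorem. A priori the abstractly presented $\frt_0$ could be strictly larger than $\mathfrak{u}$, with the excess sitting over the double loop algebra noncentrally, and nothing in your first two stages excludes this. Ruling it out requires a Gabber--Kac-style analysis: equip $\frt$ with its weight decomposition with respect to a suitable Cartan (the span of the $h_{i,0}$, $c_s$, $d_s$, $d_t$), use the Serre relations and the local nilpotency of $\ad e_{i,0}$, $\ad f_{i,0}$ to show that the weight spaces attached to ``real'' weights $\beta+k\delta_s+l\delta_t$ ($\beta\in\Delta$) are at most one-dimensional and map isomorphically onto $\frg_{\beta}\otimes s^k t^l$, conclude that the kernel is concentrated in purely imaginary weights, and only then deduce centrality because the bracket of a kernel element with any generator $e_{j,m}$ or $f_{j,m}$ lands in a real-weight component of the kernel, which vanishes. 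That analysis occupies the bulk of the proofs in \cite{MR1066569} and \cite{GRW}. As written, your argument establishes that $\phi$ is a well-defined surjection but not that it is injective, and the same gap propagates to the claim about $\frs\to\tor^+$.
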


Under the isomorphism, the elements $e_{i,0}, f_{i,0}, h_{i,0}$ are in the Lie subalgebra $\aff^{(t)}$ and identified with its Chevalley generators.
We sometimes denote them by $e_{i}, f_{i}, h_{i}$.
Note that $e_{i,k}$, $f_{i,k}$, $h_{i,k}$ ($i \in I$, $k \in \bbZ$), $c_s$, $d_s$ generate the Lie subalgebra $\aff^{(s)}$ of $\frt \cong \tor$.

We introduce notions for the affine Lie algebra $\aff^{(t)}$.
Let $\affn^{(t)}$ be the Lie subalgebra of $\aff^{(t)}$ generated by $e_i$ ($i \in \affI$), and $\affnbar^{(t)}$ that generated by $f_i$ ($i \in \affI$).
Set
\[
	\affh^{(t)} = \frh \oplus \bbC c_t \oplus \bbC d_t.
\]
The generator of imaginary roots is denoted by $\delta$.
We put $\alpha_0 = -\theta + \delta$ so that $\alpha_i$ ($i \in \affI$) forms simple roots of $\aff^{(t)}$.
We denote by $\affDelta$, $\affDelta^+$ the sets of roots, positive roots, respectively.
Let $\left(\aff^{(t)}\right)_{\alpha}$ ($\alpha \in \affDelta)$ be the corresponding root space.
The coroot is defined by $h_{\beta+l\delta}=h_{\beta}+lc_t$ for $\beta \in \Delta \cup \{0\}$ and $l \in \bbZ$.
We set $\affQ = \bigoplus_{i \in \affI} \bbZ \alpha_i$ and $\affQ^+ = \sum_{i \in \affI} \bbZ_{\geq 0} \alpha_i$. 

We say that an element $\Lambda$ of $\Hom_{\bbC} (\affh^{(t)},\bbC)$ is a dominant integral weight of $\aff^{(t)}$ if $\langle h_i, \Lambda\rangle \in \bbZ_{\geq 0}$ holds for any $i \in \affI$.
In this article, they are further assumed to satisfy $\langle d_t, \Lambda\rangle =0$ for simplicity.
Define the fundamental weights $\Lambda_i$ ($i \in \affI$) by $\langle h_j , \Lambda_i \rangle = \delta_{ij}$ and $\langle d_t, \Lambda_i \rangle = 0$.
We denote by $L(\Lambda)$ the irreducible $\aff^{(t)}$-module with highest weight $\Lambda$.
We will use the symbol $L(\Lambda)^{(s)}$ for the irreducible $\aff^{(s)}$-module with highest weight $\Lambda$.

\subsection{Triangular decomposition}

Let $\torn$ be the Lie subalgebra of $\tor$ generated by $e_{i,k}$ ($i \in \affI$, $k \in \bbZ$), and $\tornbar$ that generated by $f_{i,k}$ ($i \in \affI$, $k \in \bbZ$).
Set
\begin{equation*}
	\begin{split}
		\torh &= \frh \otimes \bbC[s^{\pm 1}] \oplus \displaystyle\bigoplus_{k \neq 0} \bbC c(k,0) \oplus \bbC c_s \oplus \bbC c_t \oplus \bbC d_s \oplus \bbC d_t \\
		&= \left(\frh \oplus \bbC c_t\right) \otimes \bbC[s^{\pm 1}] \oplus \bbC c_s \oplus \bbC d_s \oplus \bbC d_t.
	\end{split}
\end{equation*}

\begin{prop}
We have
\[
	\torn = \affn^{(t)} \otimes \bbC[s^{\pm 1}] \oplus \displaystyle\bigoplus_{\substack{k \in \bbZ \\ l \geq 1}} \bbC c(k,l),\quad
	\tornbar = \affnbar^{(t)} \otimes \bbC[s^{\pm 1}] \oplus \displaystyle\bigoplus_{\substack{k \in \bbZ \\ l \leq -1}} \bbC c(k,l).
\]
\end{prop}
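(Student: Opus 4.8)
The plan is to prove each equality by two opposite inclusions; I would treat $\torn$ in detail, since the assertion for $\tornbar$ is entirely symmetric under $e_{i,k}\mapsto f_{i,k}$, $t\mapsto t^{-1}$, $l\mapsto-l$. Write $R=\affn^{(t)}\otimes\bbC[s^{\pm1}]\oplus\bigoplus_{k\in\bbZ,\,l\geq1}\bbC c(k,l)$ for the claimed right-hand side. Both $\torn$ and $R$ are graded by the $d_t$-eigenvalue, and the generators $e_{i,k}$ have $t$-degree $0$ for $i\in I$ and $t$-degree $1$ for $i=0$, so I would organize the whole argument by this grading. The representation-theoretic facts I rely on are that $\nil$ is isotropic for $(\,,\,)$ and is generated by the $e_i$, that the adjoint module is irreducible with lowest weight vector $f_\theta$ so that $\frg=\mathrm{span}\{(\ad x_1)\cdots(\ad x_r)f_\theta\mid x_i\in\nil\}$, and that $[\frg,\frg]=\frg$.

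For $\torn\subseteq R$ I would verify that $R$ is a Lie subalgebra containing every generator. The generators lie in $R$ because $e_i\otimes s^k\in\nil\otimes\bbC[s^{\pm1}]$ and $e_{0,k}=f_\theta\otimes s^k t\in\frg\otimes\bbC[s^{\pm1}]t$. Closedness under bracket I would check from \eqref{eq:bracket}, distinguishing cases by $t$-degree: two elements of $\nil\otimes\bbC[s^{\pm1}]$ bracket into $\nil\otimes\bbC[s^{\pm1}]$ with no central term (when $k+m\neq0$ the coefficient $lm-kn$ vanishes, and when $k+m=0$ the putative term $(x,y)kc_s$ vanishes because $(x,y)=0$ by isotropy of $\nil$); a degree-$0$ element and a degree-$l\geq1$ element bracket into $\frg\otimes\bbC[s^{\pm1}]t^l$ plus a multiple of $c(\ast,l)$; and two elements of positive degrees $l,n$ bracket into $\frg\otimes\bbC[s^{\pm1}]t^{l+n}$ plus a multiple of $c(\ast,l+n)$. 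In each case every central term produced has $t$-degree $\geq1$, so no $c_s$ or $c_t$ appears and $R$ is closed.

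For $R\subseteq\torn$ I would build up $R$ degree by degree. In degree $0$, iterated brackets of the $e_i\otimes s^k$ ($i\in I$) yield $e_\alpha\otimes s^p$ for all $\alpha\in\Delta^+$, $p\in\bbZ$, with no central term ever appearing, whence $\nil\otimes\bbC[s^{\pm1}]\subseteq\torn$. In degree $1$ I would first extract the central elements: for $p\neq0$ one has $[e_\theta\otimes s^m,f_\theta\otimes s^{p-m}t]=h_\theta\otimes s^p t-\tfrac{m}{p}c(p,1)$, so the difference of two such brackets for distinct $m$ is a nonzero multiple of $c(p,1)$; the case $p=0$ is handled the same way through the middle branch of \eqref{eq:bracket}. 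With all $c(k,1)$ now available, the brackets $[x\otimes s^m,f_\theta\otimes s^k t]$ ($x\in\nil$) can be cleaned of their central part, and iterating via irreducibility of the adjoint module gives $\frg\otimes\bbC[s^{\pm1}]t\subseteq\torn$; thus $V_1:=\frg\otimes\bbC[s^{\pm1}]t\oplus\bigoplus_k\bbC c(k,1)\subseteq\torn$. Finally I would induct on $l$: bracketing $V_1$ against $V_{l-1}$, the coefficient of $c(p,l)$ in $[e_\theta\otimes s^m t,f_\theta\otimes s^{p-m}t^{l-1}]$ equals $1-ml/p$, which is non-constant in $m$ since $l\geq2$, so differencing again produces every $c(k,l)$; then $[\frg,\frg]=\frg$ together with the central corrections yields $\frg\otimes\bbC[s^{\pm1}]t^l$. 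Summing over $l$ gives $R\subseteq\torn$.

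The main obstacle, and the only delicate point, is the bookkeeping of the central elements $c(k,l)$. On the one hand one must ensure the bracket never produces a central term of $t$-degree $\leq0$, so that $R$ is closed and in particular no $c_s$, $c_t$ leak in — this is exactly where isotropy of $\nil$ enters. On the other hand one must actually realize every $c(k,l)$ with $l\geq1$ inside $\torn$; the device that makes this work is to difference two brackets having the same non-central part but different central coefficients, and the computation reduces to checking that the relevant coefficient genuinely varies with the chosen $s$-degree (e.g.\ $1-ml/p$ is non-constant precisely because $l\neq0$).
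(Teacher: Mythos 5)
Your proof is correct, but it is organized differently from the paper's. You prove both inclusions head-on: closure of the right-hand side $R$ under the bracket (using isotropy of $\nil$ to rule out $c_s$, $c_t$ and central terms of nonpositive $t$-degree) gives $\torn\subseteq R$, and an explicit degree-by-degree generation argument (differencing brackets with the same non-central part to harvest each $c(k,l)$, then using irreducibility of the adjoint module and $[\frg,\frg]=\frg$) gives $R\subseteq\torn$. The paper only carries out the analogue of your second step — it checks $\torn\supseteq\torn'$ and $\tornbar\supseteq\tornbar'$ directly from the bracket formula \eqref{eq:bracket} — and then obtains the reverse inclusions for free from a sandwich argument: the sum $\tornbar+\torh+\torn$ is direct (each piece sits in positive, zero, or negative $\affQ$-weight spaces), while $\tor=\tornbar'\oplus\torh\oplus\torn'$ by inspection, so the two decompositions must coincide componentwise. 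The paper's route buys economy, since one never has to verify that $\torn'$ is a subalgebra; your route is self-contained, makes explicit exactly where the isotropy of $\nil$ prevents $c_s$ and $c_t$ from leaking into $\torn$, and supplies the generation details (the non-constancy in $m$ of the coefficients $-m/p$ and $1-ml/p$) that the paper compresses into ``we see by the formula of the Lie bracket.'' Both arguments are sound.
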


\begin{proof}
Denote by $\torn'$ and $\tornbar'$ the right-hand sides.
Then we see by the formula of the Lie bracket (\ref{eq:bracket}) that $\torn \supset \torn'$ and $\tornbar \supset \tornbar'$.
We also see that $\tornbar + \torh + \torn = \tornbar \oplus \torh \oplus \torn$.
Since we have $\tor = \tornbar' \oplus \torh \oplus \torn'$, the assertion holds.
\end{proof}

In this article, we call
\[
	\tor = \tornbar \oplus \torh \oplus \torn
\]
the triangular decomposition of $\tor$.

In $\tor^+$, the elements $e_{i,k}$ ($i \in \affI$, $k \in \bbZ_{\geq 0}$) generate 
\[
	\torn \cap \tor^+ = \affn^{(t)} \otimes \bbC[s] \oplus \displaystyle\bigoplus_{\substack{k \geq 1 \\ l \geq 1}} \bbC c(k,l),
\]
and $f_{i,k}$ ($i \in \affI$, $k \in \bbZ_{\geq 0}$) generate 
\[
	\tornbar \cap \tor^+ = \affnbar^{(t)} \otimes \bbC[s] \oplus \displaystyle\bigoplus_{\substack{k \geq 1 \\ l \leq -1}} \bbC c(k,l).
\]
Further set
\[
	\torh' = \torh \cap \tor' = \left(\frh \oplus \bbC c_t\right) \otimes \bbC[s^{\pm 1}] \oplus \bbC c_s \oplus \bbC d_t.
\]

\subsection{Automorphisms}\label{subsection:auto}

Let $S$ be the ring automorphism of $\bbC[s^{\pm 1},t^{\pm 1}]$ defined by $s \mapsto t$, $t \mapsto s^{-1}$.
It naturally induces a Lie algebra automorphism of $\tor$ which is denoted by the same letter $S$.
Later we will rather use its inverse $S^{-1}$.
It corresponds to the assignment $s \mapsto t^{-1}$, $t \mapsto s$.
In particular we have
\[
	S^{-1}(c(k,l)) = \begin{cases}
		(k/l) c(l,-k) & \text{ if } k,l \neq 0,\\
		-c(l,0) & \text{ if } k=0,\\
		c(0,-k) & \text{ if } l=0,
	\end{cases}
	\quad S^{-1}(c_s) = -c_t, \quad S^{-1}(c_t) = c_s.
\]

We introduce Lie algebra automorphisms $T_0$ and $T_{\theta}$ of $\tor$ by
\[
	T_0 = \exp\ad e_0 \circ \exp\ad (-f_0) \circ \exp\ad e_0,
\]
\[
	T_{\theta} = \exp\ad e_{\theta} \circ \exp\ad (-f_{\theta}) \circ \exp\ad e_{\theta}.
\]
We can regard them as automorphisms of $\tor^+$ by restriction.

\begin{lem}\label{lem:induction}
We have $e_{\theta} \otimes s^k t^l = T_0 T_{\theta} (e_{\theta} \otimes s^k t^{l+2})$.
\end{lem}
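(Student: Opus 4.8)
The plan is to apply the two reflections in turn, using that each $T_{\bullet}$ is the triple exponential of an $\mathfrak{sl}_2$-triple and that the vector $e_{\theta} \otimes s^k t^{l+2}$ lies in a small finite-dimensional orbit on which the exponentials terminate.

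First I would evaluate $T_{\theta}$. The triple $\{e_{\theta}, f_{\theta}, h_{\theta}\}$ lies in $\frg \otimes 1$, i.e.\ in bidegree $(0,0)$, so by (\ref{eq:bracket}) bracketing $e_{\theta}$ or $f_{\theta}$ against any $x \otimes s^k t^m$ produces no central term: the coefficient of $c(\cdots)$ always carries a factor equal to the $s$- or $t$-degree of $e_{\theta}, f_{\theta}$, which is zero. Hence $\ad e_{\theta}$ and $\ad f_{\theta}$ act on each fiber $\frg \otimes s^k t^m$ exactly as on $\frg$, so $T_{\theta}$ restricts fiberwise to the usual finite $\mathfrak{sl}_2$-reflection. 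The classical identity $\tau(e_{\theta}) = -f_{\theta}$ then gives $T_{\theta}(e_{\theta} \otimes s^k t^{l+2}) = -f_{\theta} \otimes s^k t^{l+2}$.

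Next I would evaluate $T_0$ on $f_{\theta} \otimes s^k t^m$ with $m = l+2$, where $e_0 = f_{\theta} \otimes t$, $f_0 = e_{\theta} \otimes t^{-1}$ and $h_0 = -h_{\theta} + c_t$. A direct check with (\ref{eq:bracket}) shows $\ad e_0 (f_{\theta} \otimes s^k t^m) = 0$ (both $[f_{\theta}, f_{\theta}]$ and $(f_{\theta}, f_{\theta})$ vanish) and $\ad h_0(f_{\theta} \otimes s^k t^m) = 2\, f_{\theta} \otimes s^k t^m$, so $f_{\theta} \otimes s^k t^m$ is a highest-weight vector of weight $2$ for $\{e_0, f_0, h_0\}$. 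Applying $\ad f_0$ once gives $h_{\theta} \otimes s^k t^{m-1}$ plus a central multiple of $c(k, m-1)$, and applying it again sends the first summand to $-2\, e_{\theta} \otimes s^k t^{m-2}$ and kills the central summand. As $\ad f_0^3$ vanishes, the orbit is the $3$-dimensional irreducible, and the standard reflection formula $T_0(v) = \frac{1}{2}\, \ad f_0^2(v)$ on a weight-$2$ highest-weight vector yields $T_0(f_{\theta} \otimes s^k t^m) = -e_{\theta} \otimes s^k t^{m-2}$. Combining the two steps, $T_0 T_{\theta}(e_{\theta} \otimes s^k t^{l+2}) = T_0(-f_{\theta} \otimes s^k t^{l+2}) = e_{\theta} \otimes s^k t^l$.

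The only delicate point is the bookkeeping of the central elements and of the several branches of (\ref{eq:bracket}) that are triggered when $k = 0$ or when an $s$- or $t$-degree sum vanishes (for instance $m = 1$, where $\ad f_0(f_{\theta} \otimes t)$ picks up an extra term $-c_t$). The main thing to check is that in every branch the central contributions are uniformly irrelevant: they are either annihilated at the next application of $\ad f_0$ because they are central, or their coefficient vanishes because $(e_{\theta}, h_{\theta}) = 0$ together with a zero leading degree factor. This is what makes the final formula independent of $(k,l)$. Conceptually, $T_{\theta}$ and $T_0$ are the reflections $s_{\theta}$ and $s_0$ of the affine $\mathfrak{sl}_2$ attached to $\theta$ inside $\aff^{(t)}$, and $T_0 T_{\theta}$ is the translation lowering the $t$-degree of the $\theta$-root space by $2$; the computation above is the explicit form of this translation in the central extension.
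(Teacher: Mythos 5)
Your proof is correct and follows essentially the same route as the paper: first compute $T_{\theta}(e_{\theta}\otimes s^{k}t^{l+2})=-f_{\theta}\otimes s^{k}t^{l+2}$ fiberwise (no central terms arise since $e_{\theta},f_{\theta}$ sit in bidegree $(0,0)$), then evaluate $T_{0}$ on the result. The only difference is one of packaging: where the paper expands the three exponentials in $T_{0}$ term by term and lets the central contributions cancel at the end, you shortcut this by noting that $f_{\theta}\otimes s^{k}t^{l+2}$ is a weight-$2$ highest-weight vector for the triple $(e_{0},h_{0},f_{0})$ with $(\ad f_{0})^{3}$ vanishing on it, so $T_{0}(v)=\tfrac12(\ad f_{0})^{2}v=-e_{\theta}\otimes s^{k}t^{l}$; this is a valid and slightly cleaner way to see that the central terms appearing at the intermediate stage (including the exceptional branches at $k=0$ or $l=-1$) never affect the answer.
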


\begin{proof}
By a direct calculation.
We use the following:
\begin{align*}
	T_{\theta} (e_{\theta} \otimes s^k t^{l+2}) &= - f_{\theta} \otimes s^k t^{l+2},\\
	\exp\ad e_0 (f_{\theta} \otimes s^k t^{l+2}) &= f_{\theta} \otimes s^k t^{l+2},\\
	\exp\ad (-f_0) (f_{\theta} \otimes s^k t^{l+2}) &= f_{\theta} \otimes s^k t^{l+2} - (h_{\theta} \otimes s^k t^{l+1}-s^kt^ldt) - e_{\theta} \otimes s^k t^{l},\\
	\exp\ad e_0 (h_{\theta} \otimes s^k t^{l+1}) &= h_{\theta} \otimes s^k t^{l+1} + 2 f_{\theta} \otimes s^k t^{l+2},\\
	\exp\ad e_0 (e_{\theta} \otimes s^k t^{l}) &= e_{\theta} \otimes s^k t^{l} - h_{\theta} \otimes s^k t^{l+1} + s^k t^l dt - f_{\theta} \otimes s^k t^{l+2}.
\end{align*}
\end{proof}

Let $M$ be a module of $\mathcal{A}=\tor,$ $\tor',$ or $\tor^+$ and assume that $M$ is integrable as a $\aff^{(t)}$-module.
Then $T_0, T_{\theta} \in \Aut M$ are similarly defined.
Moreover they satisfy
\[
	T_0(xv) = T_0(x)T_0(v), \quad T_{\theta}(xv) = T_{\theta}(x)T_{\theta}(v)
\]
for $x \in \mathcal{A}$ and $v \in M$.

The Lie algebra automorphism $\tau_a$ ($a \in \bbC$) of $\tor^+$ is induced from the map $s \mapsto s+a$.

\subsection{Characters}\label{subsection:Characters}

Let $M$ be a module of $\mathcal{A}=\tor,$ $\tor',$ or $\tor^+$ and regard it as a $\aff^{(t)}$-module by restriction.
For $\lambda \in \frh^*$ and $m \in \bbC$, let $M_{\lambda-m\delta}$ be the corresponding weight space.
In this article, we always assume that any $\aff^{(t)}$-module $M$ has the weight space decomposition and $M_{\lambda-m\delta}=0$ unless $m \in \bbZ$.

We define the $p$-character $\ch_p M$ of $M$ by
\[
	\ch_p M = \sum_{\substack{\lambda \in \frh^*\\ m \in \bbZ}} (\dim M_{\lambda-m\delta}) e^{\lambda} p^{m}
\]
if it is well-defined.
This is nothing but the ordinary $\aff^{(t)}$-character with $p=e^{-\delta}$.  
Let $M$ be a graded $\tor^+$-module and $M_{\lambda-m\delta} = \bigoplus_{n \in \bbZ} M_{\lambda-m\delta}[n]$ the decomposition of the weight space into graded pieces.
We define the $(p,q)$-character $\ch_{p,q} M$ of $M$ by
\[
	\ch_{p,q} M = \sum_{\substack{\lambda \in \frh^*\\ m,n \in \bbZ}} (\dim M_{\lambda-m\delta}[n]) e^{\lambda} p^{m} q^{n}
\] 
if it is well-defined.
For two formal sums 
\[
	f = \sum_{\substack{\lambda \in \frh^*\\ m \in \bbZ}} f_{\lambda,m} e^{\lambda} p^{m}, \quad g = \sum_{\substack{\lambda \in \frh^*\\ m \in \bbZ}} g_{\lambda,m} e^{\lambda} p^{m} \quad (f_{\lambda,m}, g_{\lambda,m} \in \bbZ),
\] 
we say $f \leq g$ if $f_{\lambda,m} \leq g_{\lambda,m}$ holds for all $\lambda$ and $m$.
We define an inequality $\leq$ for 
\[
	f = \sum_{\substack{\lambda \in \frh^*\\ m,n \in \bbZ}} f_{\lambda,m,n} e^{\lambda} p^{m}q^{n}, \quad g = \sum_{\substack{\lambda \in \frh^*\\ m,n \in \bbZ}} g_{\lambda,m,n} e^{\lambda} p^{m}q^{n} \quad (f_{\lambda,m,n}, g_{\lambda,m,n} \in \bbZ)
\] 
similarly.

\section{Weyl modules}\label{section:Weyl modules}

\subsection{Definitions of global/local Weyl modules}

\begin{dfn}
Let $\Lambda$ be a dominant integral weight of $\aff^{(t)}$.
The global Weyl module $\glob(\Lambda)$  for $\tor$ with highest weight $\Lambda$ is the $\tor$-module generated by $v_{\Lambda}$ subject to the following defining relations:
\begin{gather*}
	e_{i,k} v_{\Lambda} = 0\ (i \in \affI, k \in \bbZ),\quad h v_{\Lambda} = \langle h,\Lambda \rangle v_{\Lambda}\ (h \in \affh^{(t)}),\quad	f_i^{\langle h_i,\Lambda \rangle + 1} v_{\Lambda} = 0\ (i \in \affI), \label{eq:global1} \\
		c_s v_{\Lambda} = d_s v_{\Lambda} = 0. \label{eq:global2}
\end{gather*}
The global Weyl module $\glob^+(\Lambda)$ for $\tor^+$ with highest weight $\Lambda$ is the $\tor^+$-module generated by $v_{\Lambda}^+$ subject to the following defining relations:
\[
	e_{i,k} v_{\Lambda}^+ = 0\ (i \in \affI, k \in \bbZ_{\geq 0}),\quad h v_{\Lambda}^+ = \langle h,\Lambda \rangle v_{\Lambda}^+\ (h \in \affh^{(t)}),\quad 	f_i^{\langle h_i,\Lambda \rangle + 1} v_{\Lambda}^+ = 0\ (i \in \affI).
\]
\end{dfn}

We describe the endomorphism rings of $\glob(\Lambda)$ and $\glob^{+}(\Lambda)$.
The following argument is the same as in the case of the affine and the current Lie algebras.
We give some details for completeness.

\begin{lem}
We have an action $\varphi$ of $U(\torh')$ on each weight space $\glob(\Lambda)_{\Lambda-\beta}$ $(\beta \in \affQ^{+})$ defined by
\[
	\varphi(a) (X v_{\Lambda} ) = X (a v_{\Lambda})
\]
for $a \in U(\torh')$ and $X \in U(\tor')$.
\end{lem}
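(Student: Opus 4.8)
The plan is to \emph{define} $\varphi(a)$ not by a formula but as an honest $\tor'$-module endomorphism, so that well-definedness becomes automatic. Concretely, I would produce for each $a \in U(\torh')$ the unique $\tor'$-module endomorphism $\psi_a$ of $\glob(\Lambda)$ with $\psi_a(v_{\Lambda}) = a v_{\Lambda}$, and then set $\varphi(a) = \psi_a$. Since $\psi_a$ is $\tor'$-linear we automatically get $\psi_a(X v_{\Lambda}) = X\psi_a(v_{\Lambda}) = X a v_{\Lambda}$ for all $X \in U(\tor')$, which is exactly the desired formula and simultaneously shows it depends only on $X v_{\Lambda}$. Because every element of $\torh'$ has $\aff^{(t)}$-weight $0$, $\psi_a$ preserves each weight space $\glob(\Lambda)_{\Lambda-\beta}$. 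Thus everything reduces to two inputs: (A) that $\glob(\Lambda)$, viewed as a $\tor'$-module, is the \emph{universal} module generated by a vector satisfying the listed relations, and (B) that $a v_{\Lambda}$ again satisfies those relations, which is what the universal property needs to hand us $\psi_a$.

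For the preliminary reductions, I would first note that $c_s$ is central in $\tor'$ and kills the generator, so $c_s$ annihilates all of $\glob(\Lambda)$; by \eqref{eq:bracket} this forces the image of $U(\torh')$ in $\End \glob(\Lambda)$ to be commutative, which is what lets us speak of an \emph{action} and, at the end, upgrades $a \mapsto \psi_a$ from an anti-homomorphism into an algebra homomorphism. Next, since $d_s v_{\Lambda} = 0$ and $[d_s, \tor'] \subseteq \tor'$, we have $\glob(\Lambda) = U(\tor') v_{\Lambda}$, and I would check (A): as a $\tor'$-module $\glob(\Lambda)$ is presented by $v_{\Lambda}$ subject to $e_{i,k} v_{\Lambda} = 0$ $(i \in \affI,\ k \in \bbZ)$, $h v_{\Lambda} = \langle h, \Lambda\rangle v_{\Lambda}$ $(h \in \affh^{(t)})$, $f_i^{\langle h_i,\Lambda\rangle+1} v_{\Lambda} = 0$, and $c_s v_{\Lambda} = 0$. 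The only thing to verify here is that the extra generator $d_s$ of $\tor$ imposes no further $\tor'$-relation, which follows because $\ad d_s$ carries each of these defining elements into a scalar multiple of itself (or to $0$), hence preserves the left ideal they generate in $U(\tor')$.

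The heart of step (B) is the claim that each $a v_{\Lambda}$ with $a \in U(\torh')$ is again a highest weight vector of weight $\Lambda$, i.e. $\torn(a v_{\Lambda}) = 0$. I would prove this by induction on the PBW degree of $a$, using $[\torn, \torh'] \subseteq \torn$ and, crucially, the relation $e_{i,k} v_{\Lambda} = 0$ for \emph{all} $k \in \bbZ$: writing $a = h a'$ with $h \in \torh'$, commuting a generator $e_{i,k}$ past $a$ yields $h\,e_{i,k} a' v_{\Lambda}$, killed by induction, plus $[e_{i,k}, h]\,a' v_{\Lambda}$ with $[e_{i,k}, h] \in \torn$ and $a'$ of strictly smaller degree, again killed by induction. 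Combined with the fact that $a v_{\Lambda}$ has $\aff^{(t)}$-weight $\Lambda$ (so $h\cdot a v_{\Lambda} = \langle h,\Lambda\rangle a v_{\Lambda}$ for $h \in \affh^{(t)}$) and that $c_s a v_{\Lambda} = 0$, this verifies all the relations in (A) for $a v_{\Lambda}$ except the Serre-type one.

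That remaining relation,
\[
	f_i^{\langle h_i,\Lambda\rangle+1}\, a v_{\Lambda} = 0 \qquad (i \in \affI),
\]
is where I expect the real work to lie. Setting $w = a v_{\Lambda}$, the previous paragraph gives $e_i w = 0$ and $h_i w = \langle h_i,\Lambda\rangle w$, so I would appeal to $\mathfrak{sl}_2$-integrability for the triple $(e_i,f_i,h_i)$: the defining relation $f_i^{\langle h_i,\Lambda\rangle+1} v_{\Lambda} = 0$ together with the local nilpotence of $\ad e_i$ and $\ad f_i$ on $\tor'$ forces $\glob(\Lambda)$ to be integrable for this $\mathfrak{sl}_2$, by the standard bootstrapping argument used for affine and current Weyl modules. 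Granting that, $f_i$ acts locally nilpotently and the cyclic module on $w$ is the finite-dimensional irreducible of highest weight $\langle h_i,\Lambda\rangle$, whence $f_i^{\langle h_i,\Lambda\rangle+1} w = 0$. This integrability input is the main obstacle; everything else is the bookkeeping above. The universal property of (A) then supplies $\psi_a$, and commutativity of the $U(\torh')$-action (from $c_s = 0$) makes $a \mapsto \varphi(a) = \psi_a$ a genuine action preserving each $\glob(\Lambda)_{\Lambda-\beta}$.
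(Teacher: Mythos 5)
Your proposal is correct and follows essentially the same route as the paper: the paper's proof also reduces well-definedness to checking that $a v_{\Lambda}$ again satisfies the defining relations of $\glob(\Lambda)$ as a $\tor'$-module (citing \cite[3.4]{MR2718936} for that verification), which is exactly the content of your steps (A) and (B). You simply spell out the details the paper delegates to that reference, namely the induction giving $\torn (a v_{\Lambda})=0$ and the $\mathfrak{sl}_2$-integrability argument for the relations $f_i^{\langle h_i,\Lambda\rangle+1} a v_{\Lambda}=0$.
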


\begin{proof}
To see that the action is well-defined, we need to check that $X v_{\Lambda}=0$ implies $X (a v_{\Lambda})=0$.
By the same argument as \cite[3.4]{MR2718936}, we can show that if $v$ satisfies the relations 
\[
	e_{i,k} v = 0\ (i \in \affI, k \in \bbZ),\ h v = \langle h,\Lambda \rangle v\ (h \in \affh^{(t)}),\ f_i^{\langle h_i,\Lambda \rangle + 1} v = 0\ (i \in \affI),\ c_s v = 0,
\]
then so does $a v$.
This completes the proof.
\end{proof}

Let $\Ann v_{\Lambda}$ be the annihilator ideal of $U(\torh')$ and set
\[
	\tilde{A}(\Lambda) = U(\torh') / \Ann v_{\Lambda}.
\]
Since the action $\varphi$ of $\torh'$ factors through an abelian Lie algebra $\torh' / \bbC c_s \oplus \bbC d_t$, $\tilde{A}(\Lambda)$ is a commutative algebra.

\begin{lem}\label{lem:highest_weight_space}
The action map
\[
	\tilde{A}(\Lambda) \to \glob(\Lambda)_{\Lambda}, \quad a \mapsto a v_{\Lambda}
\]
gives an isomorphism of $\bbC$-vector spaces.
\end{lem}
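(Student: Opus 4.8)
The plan is to treat injectivity and surjectivity separately; injectivity is formal, while surjectivity is a standard highest-weight-space computation. By construction $\tilde{A}(\Lambda) = U(\torh')/\Ann v_{\Lambda}$, and $\Ann v_{\Lambda}$ is precisely the set of $a \in U(\torh')$ with $a v_{\Lambda} = 0$, i.e. the kernel of $a \mapsto a v_{\Lambda}$. Hence the induced map is well defined and injective. Its image lies in $\glob(\Lambda)_{\Lambda}$ because every generator of $\torh'$ preserves the $\aff^{(t)}$-weight: the elements $h \otimes s^{k}$ ($h \in \frh$), $c_{t} \otimes s^{k}$, $c_{s}$ and $d_{t}$ all commute with $\frh$ and have $d_{t}$-eigenvalue $0$. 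So the only content is the surjectivity statement $\glob(\Lambda)_{\Lambda} = U(\torh') v_{\Lambda}$.

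For surjectivity I would first invoke the triangular decomposition $\tor = \tornbar \oplus \torh \oplus \torn$ together with the PBW theorem to write $\glob(\Lambda) = U(\tor) v_{\Lambda} = U(\tornbar) U(\torh) U(\torn) v_{\Lambda}$. The relations $e_{i,k} v_{\Lambda} = 0$ ($i \in \affI$, $k \in \bbZ$) say that $\torn$ annihilates $v_{\Lambda}$, so $U(\torn) v_{\Lambda} = \bbC v_{\Lambda}$ and $\glob(\Lambda) = U(\tornbar) U(\torh) v_{\Lambda}$. Now I extract the weight-$\Lambda$ component: every element of $\torh$ acts by a weight-preserving operator, whereas $\tornbar = \affnbar^{(t)} \otimes \bbC[s^{\pm 1}] \oplus \bigoplus_{\substack{k \in \bbZ \\ l \leq -1}} \bbC c(k,l)$ is spanned by vectors of strictly negative $\aff^{(t)}$-weight (negative roots in the first summand, and $l\delta$ with $l \leq -1$, i.e. $-l\delta \in \affQ^{+}$, in the second). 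Since $\affQ^{+}$ is a pointed cone, any nonscalar monomial in $U(\tornbar)$ strictly lowers the weight, so the weight-$\Lambda$ part of $U(\tornbar) U(\torh) v_{\Lambda}$ comes only from the scalar part of $U(\tornbar)$. This yields $\glob(\Lambda)_{\Lambda} = U(\torh) v_{\Lambda}$.

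It then remains to replace $U(\torh)$ by $U(\torh')$, and this is where I expect the only real (if mild) obstacle to lie. The two subalgebras differ only by the degree operator $d_{s}$, namely $\torh = \torh' \oplus \bbC d_{s}$, and here the defining relation $d_{s} v_{\Lambda} = 0$ is used. Choosing a PBW basis of $U(\torh)$ in which $d_{s}$ is the largest element, every element of $U(\torh)$ is a sum of monomials of the form $P\, d_{s}^{\,n}$ with $P \in U(\torh')$; applying such a monomial to $v_{\Lambda}$ annihilates all terms with $n \geq 1$, whence $U(\torh) v_{\Lambda} = U(\torh') v_{\Lambda}$. One should note that although $\torh$ is not abelian --- it carries a Heisenberg bracket $[h \otimes s^{k}, h' \otimes s^{-k}] = (h,h') k c_{s}$ --- this does not interfere, since $c_{s}$ is central and both $c_{s}$ and $d_{s}$ annihilate $v_{\Lambda}$. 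Combining with the injectivity established above yields the asserted isomorphism $\tilde{A}(\Lambda) \cong \glob(\Lambda)_{\Lambda}$.
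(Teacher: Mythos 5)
Your proof is correct and follows the same route as the paper: injectivity and well-definedness are immediate from the definition of $\tilde{A}(\Lambda)$ as $U(\torh')/\Ann v_{\Lambda}$, and surjectivity reduces to the identity $\glob(\Lambda)_{\Lambda}=U(\torh')v_{\Lambda}$. The paper simply asserts this identity, whereas you supply the standard justification (PBW with the triangular decomposition, $\torn v_{\Lambda}=0$, strict weight-lowering by $\tornbar$, and $d_{s}v_{\Lambda}=0$ to pass from $\torh$ to $\torh'$), all of which is accurate.
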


\begin{proof}
The well-definedness and the injectivity immediately follow from the definition of $\tilde{A}(\Lambda)$.
The surjectivity holds since we have $\glob(\Lambda)_{\Lambda} = U(\torh') v_{\Lambda}$.
\end{proof}

\begin{lem}
The natural map
\[
	\tilde{A}(\Lambda) \to \End_{\tor'} \glob(\Lambda), \quad a \mapsto \varphi(a)
\]
gives an isomorphism of $\bbC$-algebras.
\end{lem}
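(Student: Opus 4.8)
The plan is to exploit the fact that $\glob(\Lambda)$ is cyclic and that a $\tor'$-endomorphism is therefore completely determined by where it sends $v_{\Lambda}$, which is forced into the highest weight space $\glob(\Lambda)_{\Lambda}$. The first structural input I would record is that $\glob(\Lambda) = U(\tor') v_{\Lambda}$: since $\tor = \tor' \oplus \bbC d_s$ and $d_s v_{\Lambda} = 0$, the degree operator $d_s$ produces nothing new, so $v_{\Lambda}$ already generates the module over $\tor'$. Consequently any $\psi \in \End_{\tor'} \glob(\Lambda)$ is determined by the single value $\psi(v_{\Lambda})$. The second input is Lemma~\ref{lem:highest_weight_space}, which identifies $\glob(\Lambda)_{\Lambda}$ with $U(\torh') v_{\Lambda}$.

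Granting these, I would first check that $a \mapsto \varphi(a)$ is a well-defined algebra homomorphism into $\End_{\tor'}\glob(\Lambda)$. That each $\varphi(a)$ is a well-defined $\tor'$-linear map is precisely the content of the construction of the action $\varphi$; and if $a \in \Ann v_{\Lambda}$ then $\varphi(a)(X v_{\Lambda}) = X(a v_{\Lambda}) = 0$, so $\varphi$ descends to $\tilde A(\Lambda)$. It is multiplicative: for $a,b \in U(\torh')$ and $X \in U(\tor')$ one computes $\varphi(a)\varphi(b)(X v_{\Lambda}) = X b a\, v_{\Lambda}$ and $\varphi(ab)(X v_{\Lambda}) = X a b\, v_{\Lambda}$, and these agree since $(ab-ba) v_{\Lambda} = 0$ by commutativity of $\tilde A(\Lambda)$, while $\varphi(1) = \id$. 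Injectivity is immediate: evaluating at $v_{\Lambda}$ gives $\varphi(a)(v_{\Lambda}) = a v_{\Lambda}$, so $\varphi(a) = 0$ forces $a \in \Ann v_{\Lambda}$, i.e.\ $a = 0$ in $\tilde A(\Lambda)$.

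For surjectivity I would take $\psi \in \End_{\tor'}\glob(\Lambda)$ and use that $\affh^{(t)} = \frh \oplus \bbC c_t \oplus \bbC d_t$ is contained in $\tor'$, so $\psi$ commutes with all weight operators and hence preserves weight spaces. In particular $\psi(v_{\Lambda}) \in \glob(\Lambda)_{\Lambda} = U(\torh') v_{\Lambda}$ by Lemma~\ref{lem:highest_weight_space}, so $\psi(v_{\Lambda}) = a v_{\Lambda}$ for some $a \in U(\torh')$. Since $\glob(\Lambda) = U(\tor') v_{\Lambda}$ and both $\psi$ and $\varphi(a)$ are $\tor'$-endomorphisms agreeing on the generator $v_{\Lambda}$, they coincide, giving $\psi = \varphi(a)$.

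I do not expect a serious obstacle here, since the two preceding lemmas do the real work; the only point requiring care is conceptual rather than technical. It is essential that one works with $\tor'$ rather than $\tor$: dropping $d_s$ is exactly what allows an endomorphism to shift the $s$-degree and thereby reach all of $U(\torh')$, while retaining $\affh^{(t)} \subset \tor'$ still guarantees preservation of the $\affh^{(t)}$-weights needed to pin $\psi(v_{\Lambda})$ into the highest weight space. Making sure that $v_{\Lambda}$ cannot be moved out of $\glob(\Lambda)_{\Lambda}$, and that this space is exactly $U(\torh') v_{\Lambda}$, is the crux, and both facts are already available from the earlier results.
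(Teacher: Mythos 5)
Your proposal is correct and follows essentially the same route as the paper: the paper packages the argument as two injective maps $\tilde{A}(\Lambda) \hookrightarrow \End_{\tor'}\glob(\Lambda) \hookrightarrow \glob(\Lambda)_{\Lambda}$ whose composite $a \mapsto a v_{\Lambda}$ is bijective by Lemma~\ref{lem:highest_weight_space}, which is exactly your injectivity-plus-surjectivity argument in a slightly more compressed form. The points you spell out explicitly (weight preservation forcing $\psi(v_{\Lambda})$ into the highest weight space, and cyclicity over $\tor'$) are precisely the facts the paper leaves implicit.
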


\begin{proof}
By the definition of $\tilde{A}(\Lambda)$, we have a natural injective algebra homomorphism
\[
	\tilde{A}(\Lambda) \to \End_{\tor'} \glob(\Lambda), \quad a \mapsto \varphi(a).
\]
We also have a natural $\bbC$-linear map
\[
	\End_{\tor'} \glob(\Lambda) \to \glob(\Lambda)_{\Lambda}, \quad f \mapsto f(v_{\Lambda})
\]
and this is injective since $\glob(\Lambda)$ is generated by $v_{\Lambda}$.
The composite of the maps
\[
	\tilde{A}(\Lambda) \hookrightarrow \End_{\tor'} \glob(\Lambda) \hookrightarrow \glob(\Lambda)_{\Lambda}
\]
is given by $a \mapsto a v_{\Lambda}$.
Since this map is bijective by Lemma~\ref{lem:highest_weight_space}, the two injective maps are bijective.
\end{proof}

Write $\Lambda = \sum_{i \in \affI} m_i \Lambda_i$ with the fundamental weights $\Lambda_i$ and $m_i \in \bbZ_{\geq 0}$.
We define $A(\Lambda)$ by
\[
	A(\Lambda) = \bigotimes_{i \in \affI} \bbC[z_{i,1}^{\pm 1}, \ldots, z_{i,m_i}^{\pm 1}]^{\frakS_{m_i}},
\]	
the symmetric Laurent polynomial algebra associated with $\Lambda$.

\begin{prop}
The assignment
\[
	\sum_{m=1}^{m_i} z_{i,m}^k \mapsto h_{i,k}
\]
gives an isomorphism $A(\Lambda) \cong \tilde{A}(\Lambda)$ of $\bbC$-algebras.
\end{prop}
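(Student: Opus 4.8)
The plan is to produce a surjection $A(\Lambda) \to \tilde{A}(\Lambda)$ and then prove it is injective, following the argument for loop algebras of finite-dimensional simple Lie algebras; the only structural change is that here the ``horizontal'' algebra is the affine Lie algebra $\aff^{(t)}$, so every index $i \in \affI$ is treated on the same footing.

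First I would observe that $\tilde{A}(\Lambda)$ is generated by the images of the $h_{i,k}$ ($i \in \affI$, $k \in \bbZ$). Indeed, $c_s v_\Lambda = 0$ and $d_t v_\Lambda = \langle d_t, \Lambda\rangle v_\Lambda = 0$, so these act as zero, while the remaining summand $(\frh \oplus \bbC c_t) \otimes \bbC[s^{\pm 1}]$ of $\torh'$ is spanned by the $h_{i,k}$: the elements $h_i$ ($i \in I$) together with $h_0 = -h_\theta + c_t$ form a basis of $\frh \oplus \bbC c_t$, the index $0$ contributing the central direction through $c(k,0) = c_t \otimes s^k$. Since the target is commutative, the assignment $\sum_m z_{i,m}^k \mapsto h_{i,k}$ defines a homomorphism from the free commutative algebra on the power sums, and by the generation statement its image is all of $\tilde{A}(\Lambda)$.

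Next I would show this homomorphism factors through $A(\Lambda)$. For each $i \in \affI$ the elements $e_{i,k}, f_{i,k}, h_{i,k}$ span a loop $\mathfrak{sl}_2$ (with $c_s$ central, acting by zero on $v_\Lambda$), so Garland's relations apply verbatim. Combining $e_{i,k} v_\Lambda = 0$, $h_{i,0} v_\Lambda = m_i v_\Lambda$ and the integrability relation $f_i^{m_i+1} v_\Lambda = 0$, one deduces that the elementary-symmetric counterparts $\Lambda_{i,r}$ of the $h_{i,k}$ satisfy $\Lambda_{i,r} v_\Lambda = 0$ for $r > m_i$, and that $\Lambda_{i,m_i}$ acts invertibly (using the negative powers of $s$). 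These are precisely the relations defining the symmetric Laurent polynomial algebra $A(\Lambda)$, so the surjection $A(\Lambda) \to \tilde{A}(\Lambda)$ is well-defined.

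The main work is \emph{injectivity}, and this is where I expect the real difficulty. The strategy is to realize every point of $\mathrm{Spec}\, A(\Lambda)$ by an honest module: for each family of scalars $a_{i,r} \in \bbC^\times$ I would build a $\tor$-module with a vector $w$ of weight $\Lambda$ satisfying the Weyl-module relations and on which $h_{i,k}$ acts by $\sum_r a_{i,r}^k$. The natural candidate is the tensor product, over all $(i,r)$, of the evaluation at $s = a_{i,r}$ of the fundamental integrable $\aff^{(t)}$-module $L(\Lambda_i)$; on the tensor product of highest weight vectors the operator $h_{j,k}$ acts by $\sum_r a_{j,r}^k$, as required. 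Granting this, any $P \in A(\Lambda)$ in the kernel satisfies $P v_\Lambda = 0$ in $\glob(\Lambda)$, hence $P w = 0$, i.e.\ $P$ vanishes at $(a_{i,r})$; as these points are Zariski dense the kernel must be zero. The delicate step, specific to the toroidal setting, is to verify that such evaluation constructions are compatible with the full central extension of $\tor$ --- in particular that the central elements $c(k,l)$ with $l \neq 0$ can be accommodated (for instance acting by zero) so that one really obtains $\tor$-modules with the asserted highest weight data.
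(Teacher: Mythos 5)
Your proposal is correct and follows essentially the same route as the paper: well-definedness and surjectivity via the Garland-type relations of Chari--Pressley, and injectivity by mapping the global Weyl module onto tensor products of evaluation modules $\ev_{a_{i,m}}^{*}L(\Lambda_i)$ (the paper likewise defines $\ev_a$ by first killing $c_s$ and the $c(k,l)$ with $l\neq 0$, which is exactly the compatibility point you flag). The only cosmetic difference is that the paper phrases the density argument through a maximal ideal $\mathfrak{m}$ avoiding a given nonzero element rather than Zariski density of the points $(a_{i,r})$.
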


\begin{proof}
The well-definedness and the surjectivity of the map is proved in the same way as \cite[Proposition~1.1 (i), (iv), (v)]{MR1850556}.

We follow the argument in \cite[5.6]{MR3384485} to show the injectivity.
Take a nonzero element $a$ of $A(\Lambda)$ and fix a maximal ideal $\mathfrak{m}$ which does not contain $a$.
Assume that $\glob(\Lambda) \otimes_{A(\Lambda)} A(\Lambda) / \mathfrak{m}$ is nonzero.
Then the image of $a$ in $A(\Lambda) / \mathfrak{m}$ acts on $\glob(\Lambda) \otimes_{A(\Lambda)} A(\Lambda) / \mathfrak{m}$ by a nonzero scaler.
Hence we conclude that $a$ acts on $\glob(\Lambda)$ nontrivially and the map $A(\Lambda) \to \tilde{A}(\Lambda) \cong \End_{\tor'}\glob(\Lambda)$ is shown to be injective.

Thus it is enough to show that $\glob(\Lambda) \otimes_{A(\Lambda)} A(\Lambda) / \mathfrak{m}$ is nonzero.
We denote by $\bar{p}_{k}^{(i)}$ ($i \in \affI$, $k \in \bbZ$) the image of the power some function $p_{k}^{(i)} = \sum_{m=1}^{m_i} z_{i,m}^k$ in $A(\Lambda)/\mathfrak{m}$.
We can choose a set of nonzero complex numbers $\{ a_{i,m} \}$ satisfying
\[
	\sum_{m=1}^{m_i} a_{i,m}^k = \bar{p}_{k}^{(i)}
\]
under an identification $A(\Lambda)/\mathfrak{m} \cong \bbC$.
For each $a \in \bbC^{\times}$, we have the evaluation map
\[
	\ev_a \colon \tor' \to \aff^{(t)} 
\]
defined as the composite of
\[
	\tor' \to \tor' / \bigoplus_{\substack{k \in \bbZ\\ l \neq 0}} \bbC c(k,l) \oplus \bbC c_s \cong \left( \aff^{(t)} \right)' \otimes \bbC[s^{\pm 1}] \oplus \bbC d_t
\]
and the evaluation at $s=a$.
Then we have a nonzero $\tor'$-module homomorphism
\[
	\glob(\Lambda) \otimes_{A(\Lambda)} A(\Lambda) / \mathfrak{m} \to \bigotimes_{i \in \affI} \bigotimes_{m=1}^{m_i} \ev_{a_{i,m}}^{*}L(\Lambda_i)
\]
assigning $v_{\Lambda} \otimes 1$ to the tensor product of highest weight vectors.
This proves the assertion.
\end{proof}

We have a completely analogous story for the global Weyl module $\glob^+(\Lambda)$ over $\tor^+$ if we replace $A(\Lambda)$ with
\[
	A^+(\Lambda) = \bigotimes_{i \in \affI} \bbC[z_{i,1}, \ldots, z_{i,m_i}]^{\frakS_{m_i}}.
\]
We can summarize the discussion so far as follows.

\begin{prop}\label{prop:endomorphism}
We have $\End_{\tor'} \glob(\Lambda) \cong A(\Lambda)$ and $\End_{\tor^+} \glob^+(\Lambda) \cong A^+(\Lambda)$.
\end{prop}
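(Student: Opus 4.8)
The plan is to read both isomorphisms off the chain of identifications already assembled above, so that the proof is essentially bookkeeping. For the first assertion I would compose the two isomorphisms established immediately before the statement: the endomorphism lemma provides $\End_{\tor'} \glob(\Lambda) \cong \tilde{A}(\Lambda)$ through $a \mapsto \varphi(a)$, while the preceding proposition provides $\tilde{A}(\Lambda) \cong A(\Lambda)$ through the correspondence $h_{i,k} \leftrightarrow \sum_{m=1}^{m_i} z_{i,m}^{k}$. Composing these gives $\End_{\tor'} \glob(\Lambda) \cong A(\Lambda)$, with nothing further to verify.

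For the second assertion I would rerun the entire preceding discussion verbatim, replacing $\tor$ by $\tor^+$, the generator $v_{\Lambda}$ by $v_{\Lambda}^+$, and the subalgebra $\torh'$ by its non-negative analog $(\frh \oplus \bbC c_t) \otimes \bbC[s] \oplus \bbC d_t$ (recall that $\tor^+$ carries no $c_s$). Concretely, I would first set up the action of this abelian subalgebra on each weight space $\glob^+(\Lambda)_{\Lambda - \beta}$ with $\beta \in \affQ^+$, then identify the quotient $\tilde{A}^+(\Lambda) = U(\cdots) / \Ann v_{\Lambda}^+$ successively with the highest weight space $\glob^+(\Lambda)_{\Lambda}$ and with $\End_{\tor^+} \glob^+(\Lambda)$, and finally match $\tilde{A}^+(\Lambda)$ with the power-sum generators. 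The only structural difference is that the elements $h_{i,k}$ now occur only for $k \geq 0$, so the power sums $p_{k}^{(i)} = \sum_{m=1}^{m_i} z_{i,m}^{k}$ range over $k \in \bbZ_{\geq 0}$ and generate the genuine symmetric polynomial ring $A^+(\Lambda)$ rather than its Laurent counterpart.

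The one point I would watch is the injectivity of $A^+(\Lambda) \to \tilde{A}^+(\Lambda)$, since this is the only step in the $\tor$ argument that rested on an actual construction---the evaluation modules---rather than on formal manipulation. A maximal ideal $\mathfrak{m}$ of $A^+(\Lambda)$ corresponds to an unordered tuple $\{ a_{i,m} \}$ of complex numbers in which, unlike the Laurent case, some $a_{i,m}$ may equal $0$. I would therefore invoke the evaluation homomorphisms $\ev_a \colon \tor^+ \to \aff^{(t)}$ for every $a \in \bbC$, including $a = 0$: these are well-defined precisely because $\tor^+$ involves only non-negative powers of $s$, so specializing $s = 0$ makes sense and lands in $\aff^{(t)}$. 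Granting this, the nonzero homomorphism $\glob^+(\Lambda) \otimes_{A^+(\Lambda)} A^+(\Lambda)/\mathfrak{m} \to \bigotimes_{i \in \affI} \bigotimes_{m=1}^{m_i} \ev_{a_{i,m}}^{*} L(\Lambda_i)$ sending $v_{\Lambda}^+ \otimes 1$ to the tensor product of highest weight vectors shows the left-hand side is nonzero, and injectivity follows exactly as before. Accommodating the value $a_{i,m} = 0$ is the only genuine, and fairly minor, obstacle; every other step transfers mechanically.
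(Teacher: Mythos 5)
Your proposal is correct and follows essentially the same route as the paper, which obtains the first isomorphism by composing the preceding lemmas and simply declares the $\tor^+$ case a ``completely analogous story'' with $A^+(\Lambda)$ in place of $A(\Lambda)$. Your explicit treatment of the evaluation maps $\ev_a$ at $a=0$ (possible since $\tor^+$ involves only non-negative powers of $s$) supplies a detail the paper leaves implicit, but it is the same argument.
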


For a maximal ideal $\mathbf{a}$ of $A = A(\Lambda)$ or $A^+(\Lambda)$, we denote by $\bbC_{\mathbf{a}}$ the corresponding one-dimensional module $A/\mathbf{a}$.

\begin{dfn}
We call
\[
	\loc(\Lambda,\mathbf{a}) = \glob(\Lambda) \otimes_{A(\Lambda)} \bbC_{\mathbf{a}}, \quad \loc^+(\Lambda,\mathbf{a}) = \glob^+(\Lambda) \otimes_{A^+(\Lambda)} \bbC_{\mathbf{a}}
\]
the local Weyl modules for $\tor'$ and $\tor^+$, respectively.
\end{dfn}
We denote the images of $v_{\Lambda}$ and $v_{\Lambda}^+$ in the local Weyl modules by $v_{\Lambda,\mathbf{a}}$ and $v_{\Lambda,\mathbf{a}}^+$.

\begin{rem}
The global/local Weyl modules for $\tor$ and $\tor^+$ can be regarded as a sort of highest weight modules with respect to their triangular decompositions:
\[
	\tor = \tornbar \oplus \torh \oplus \torn, \quad \tor^+ = \left( \tornbar \cap \tor^+ \right) \oplus \left( \torh \cap \tor^+ \right) \oplus \left( \torn\cap \tor^+ \right).
\]
\end{rem}

\subsection{Finiteness of weight spaces}\label{subsection:finiteness_property}

The goal of this subsection is to prove the following.

\begin{prop}\label{prop:weight}
\begin{enumerate}
\item
Every weight space $\glob(\Lambda)_{\Lambda-\beta}$ is finitely generated over $A(\Lambda)$.
Every weight space $\loc(\Lambda,\mathbf{a})_{\Lambda-\beta}$ is finite-dimensional.
\item
Every weight space $\glob^+(\Lambda)_{\Lambda-\beta}$ is finitely generated over $A^+(\Lambda)$.
Every weight space $\loc^+(\Lambda,\mathbf{a})_{\Lambda-\beta}$ is finite-dimensional.
\item
We have $\loc(\Lambda,\mathbf{a}) = U(\tor^+) v_{\Lambda,\mathbf{a}}$. 
\end{enumerate}
\end{prop}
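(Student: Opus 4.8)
The plan is to establish the finite generation statements in (i) and (ii) first, since the finite dimensionality of the local weight spaces follows from them by base change, and then to deduce (iii) separately. I would begin with the triangular decomposition $\tor=\tornbar\oplus\torh\oplus\torn$ and the endomorphism identification of Proposition~\ref{prop:endomorphism}. Because $\torn v_{\Lambda}=0$, $d_s v_{\Lambda}=0$, and $U(\torh')v_{\Lambda}=A(\Lambda)v_{\Lambda}$, and because $\tornbar$ is generated as a Lie algebra by the $f_{i,k}$, the weight space $\glob(\Lambda)_{\Lambda-\beta}$ is spanned over $A(\Lambda)=\varphi(U(\torh'))$ by the vectors $f_{i_1,k_1}\cdots f_{i_r,k_r}v_{\Lambda}$, where the multiset $\{i_1,\dots,i_r\}\subset\affI$ realizes $\beta=\sum_{i\in\affI}n_i\alpha_i$ (so the number of factors $r=\sum_i n_i=\mathrm{ht}(\beta)$ is fixed by $\beta$) and $k_1,\dots,k_r\in\bbZ$; in the $\tor^+$ case the same holds with $k_j\geq 0$ and $A(\Lambda)$ replaced by $A^+(\Lambda)$. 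Thus the only unbounded data are the $s$-modes $k_j$, and since each $f_{i,k}$ lies in $\tor'$ and hence commutes with the $\varphi$-action of $A(\Lambda)$, finite generation is equivalent to bounding these modes modulo $A(\Lambda)$.

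I would organize the bounding by induction on $\mathrm{ht}(\beta)$, using the identity $\glob(\Lambda)_{\Lambda-\beta}=\sum_{i\in\affI,\,k\in\bbZ} f_{i,k}\,\glob(\Lambda)_{\Lambda-\beta+\alpha_i}$. If each lower weight space $\glob(\Lambda)_{\Lambda-\beta+\alpha_i}$ is generated over $A(\Lambda)$ by finitely many vectors $w_1,\dots,w_M$, the problem reduces to showing that for each fixed $i$ and each fixed $w_j$ the string $\{f_{i,k}w_j: k\in\bbZ\}$ lies in a finitely generated $A(\Lambda)$-submodule. For this I would exploit that, for every $i\in\affI$, the elements $e_{i,k},f_{i,k},h_{i,k}$ ($k\in\bbZ$) together with $c_s$ close up into a copy of the derived affine algebra $\widehat{\mathfrak{sl}}_2$ in the $s$-loop direction inside $\aff^{(s)}$, and that on $\glob(\Lambda)$ one has $c_s v_{\Lambda}=0$ together with $f_i^{\langle h_i,\Lambda\rangle+1}v_{\Lambda}=0$ — precisely the data of a level-zero loop Weyl module for $\mathfrak{sl}_2\otimes\bbC[s^{\pm1}]$. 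Garland's identities (cf.\ \cite{MR1850556}) then express $f_{i,k}$, for $|k|$ larger than a bound depending only on $\langle h_i,\Lambda\rangle$, through the Cartan-loop elements $h_{i,m}\in A(\Lambda)$ and finitely many $f_{i,k'}$ with $|k'|$ small, which gives the desired per-string finiteness and hence the induction step.

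The genuine obstacle is that Garland's relations live in $U(\tornbar)$ and, when one reorders a monomial or pushes a relation through a product of root vectors of different indices, the bracket formula \eqref{eq:bracket} produces the central elements $c(k,l)$ with $l\neq0$, together with higher root vectors carrying shifted $s$-modes. Since these $c(k,l)$ belong to the infinite-dimensional center that is simply quotiented out in the setting of \cite{MR2017585}, they cannot be discarded and must be carried through the induction; controlling them — arranging a monomial order on the $s$-modes so that every correction term produced by reordering and by the Garland reduction is strictly lower and already finitely generated — is the technical heart of the argument and the new ingredient relative to \cite{MR2017585}. The $\tor^+$ statement in (ii) runs in parallel, now with $k_j\geq0$, the polynomial ring $A^+(\Lambda)$, and the $\bbZ_{\geq0}$-grading by $d_s$; finite dimensionality of $\loc(\Lambda,\mathbf{a})_{\Lambda-\beta}=\glob(\Lambda)_{\Lambda-\beta}\otimes_{A(\Lambda)}\bbC_{\mathbf{a}}$ and of its $\tor^+$-analogue is then immediate, being a quotient of a finitely generated $A(\Lambda)$-module (resp.\ $A^+(\Lambda)$-module) by a maximal ideal.

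Finally, for (iii) I would note $\loc(\Lambda,\mathbf{a})=U(\tornbar)v_{\Lambda,\mathbf{a}}$ and that it suffices to prove the submodule $U(\tor^+)v_{\Lambda,\mathbf{a}}=U(\tornbar\cap\tor^+)v_{\Lambda,\mathbf{a}}$ is stable under the negative $s$-modes $f_i\otimes s^{-k}$ ($k\geq1$). The point is that a maximal ideal $\mathbf{a}$ of the \emph{Laurent} ring $A(\Lambda)$ corresponds to nonzero evaluation parameters $a_{i,m}\in\bbC^{\times}$. Combined with the finite dimensionality of each weight space just obtained, the level-zero Garland relations of the second paragraph specialize at $\mathbf{a}$ to genuine relations whose leading coefficients are invertible (precisely because the parameters are nonzero), and these solve each negative mode $f_i\otimes s^{-k}$ in terms of the nonnegative modes already present in $U(\tornbar\cap\tor^+)v_{\Lambda,\mathbf{a}}$. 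This is the standard passage from the loop to the current algebra, and it yields $\loc(\Lambda,\mathbf{a})=U(\tor^+)v_{\Lambda,\mathbf{a}}$.
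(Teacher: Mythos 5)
Your overall strategy --- span the weight space by monomials in the negative generators, then bound the $s$-modes by a Garland-type reduction and an induction --- has the same skeleton as the paper's proof, and you correctly identify where the new difficulty lies: the central elements $c(k,l)$ with $l\neq 0$ produced by reordering, which cannot be discarded as in \cite{MR2017585}. The problem is that you stop exactly there. Declaring that one can ``arrange a monomial order on the $s$-modes so that every correction term is strictly lower and already finitely generated'' does not work for these central terms: $c(k,-l)$ commutes with everything, so no reordering lowers its $s$-mode, and that mode is a sum of the modes of the two factors being commuted, hence unbounded. What is needed is a separate statement that $c(k,-l)v_{\Lambda}$ itself lies in a finitely generated $A(\Lambda)$-submodule, uniformly in $k$. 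The paper's Lemma~\ref{lem:single}(ii) supplies exactly this, by the concrete trick of writing $s^{k}t^{-l}ds = [x_{\alpha}\otimes s,\, x_{-\alpha}\otimes s^{k}t^{-l}] - h_{\alpha}\otimes s^{k+1}t^{-l}$ and applying the already-established mode bound for root vectors of $\affnbar^{(t)}$ (part (i) of that lemma) to the two terms on the right. Without an argument of this kind your induction does not close; this is a genuine gap, not a routine technicality. A second, smaller issue: Garland's identities give the mode reduction for $f_{i,k}$ applied to a vector annihilated by all $e_{i,m}$ and by $f_i^{\langle h_i,\Lambda\rangle+1}$, i.e.\ to $v_{\Lambda}$, not to an arbitrary generator $w_j$ of a lower weight space; so your ``per-string finiteness'' must be obtained by commuting down to $v_{\Lambda}$ and back, which is why the paper organizes the induction over PBW monomials (induction on $a+b$ in Proposition~\ref{prop:span}) rather than over $\mathrm{ht}(\beta)$.

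For (iii) your route also diverges from the paper's and is again incomplete on the central directions. The paper obtains (iii) for free: the spanning set of Proposition~\ref{prop:span} already has all $s$-modes nonnegative ($0\leq k_j\leq N(\beta_j)$, $1\leq m_j\leq N_{l_j}$), so every spanning vector lies in $U(\tor^{+})A(\Lambda)v_{\Lambda}$, and $\loc(\Lambda,\mathbf{a})=U(\tor^{+})v_{\Lambda,\mathbf{a}}$ follows at once. Your plan of solving the negative modes $f_i\otimes s^{-k}$ using invertibility of the evaluation parameters is the right spirit for the root-vector part, but $U(\tornbar)v_{\Lambda,\mathbf{a}}=U(\tornbar\cap\tor^{+})v_{\Lambda,\mathbf{a}}$ also requires eliminating the central elements $c(k,-l)$ with $k\leq 0$, which you do not address; once the analogue of Lemma~\ref{lem:single}(ii) is in place this is automatic, which is one more reason that lemma is the indispensable ingredient your proposal is missing.
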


We start to prove the following lemma.

\begin{lem}\label{lem:single}
Let $\Lambda$ be a dominant integral weight of $\aff^{(t)}$.
\begin{enumerate}
\item
For each positive root $\beta \in \affDelta^+$, there exists a nonnegative integer $N(\beta)$ satisfying the following:  we have
\[
	(X_{-\beta} \otimes s^{k}) v_{\Lambda} \in \sum_{m=0}^{N(\beta)} (X_{-\beta} \otimes s^{m}) A(\Lambda) v_{\Lambda}
\]
for any root vector $X_{-\beta}$ of $\affnbar^{(t)}$ corresponding to a negative root $-\beta$ and any $k$.

\item
For each positive integer $l >0$, there exists a nonnegative integer $N_l$ satisfying the following:  we have
\[
	c(k,-l) v_{\Lambda} \in \sum_{m=1}^{N_l} c(m,-l) A(\Lambda) v_{\Lambda} + \sum_{m=0}^{N_l} \left( \left( \aff^{(t)} \right)_{-l\delta} \otimes s^m\right) A(\Lambda) v_{\Lambda} 
\]
for any $k$.
\end{enumerate}
\end{lem}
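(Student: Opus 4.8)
The plan is to handle the three parts in a definite order, isolating the genuinely toroidal phenomenon (the infinite-dimensional center) for the end. The backbone of everything is that $c_s$ acts by $0$ on $\glob(\Lambda)$, so that for each fixed root the associated loop subalgebra in the $s$-direction sits at level zero.

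First I would dispose of the real roots in (i). Given a real positive root $\beta\in\affDelta^+$, write $\beta=\gamma+l\delta$ with $\gamma\in\Delta$ and pick root vectors $X_{\pm\beta}$; then $X_{\beta}\otimes s^{k}$, $X_{-\beta}\otimes s^{k}$ and $h_{\beta}\otimes s^{k}$ ($k\in\bbZ$) span a copy of the loop algebra $\mathfrak{sl}_2\otimes\bbC[s^{\pm1}]$. The key point, checked directly from (\ref{eq:bracket}), is that this copy is clean: since the two $t$-exponents cancel, the only central elements that can occur are $c_s$ (zero on $\glob(\Lambda)$) and $c(\cdot,0)=c_t\otimes s^{\cdot}$, and the latter already lies in $\torh'$, hence acts through $A(\Lambda)$. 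Moreover $X_{\beta}\otimes s^{k}\in\torn$ annihilates $v_{\Lambda}$, the element $h_{\beta}\otimes s^{k}$ acts through $A(\Lambda)$, and $\aff^{(t)}$-integrability of $\glob(\Lambda)$ gives $(X_{-\beta})^{\langle h_{\beta},\Lambda\rangle+1}v_{\Lambda}=0$. Thus $v_{\Lambda}$ is a highest weight vector for this loop-$\mathfrak{sl}_2$, and the standard Garland-type relations used for affine and current Weyl modules (Chari--Pressley \cite{MR1850556}, Chari--Loktev \cite{MR2271991}) produce an integer $N(\beta)$ with the asserted reduction, valid for both signs of $k$.

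The imaginary case of (i) and all of (ii) I would prove together, because (\ref{eq:bracket}) couples the central elements $c(k,-l)$ to the imaginary root spaces $\frh\otimes s^k t^{-l}$. Fix $l>0$ and a finite positive root $\gamma$ with $(e_{\gamma},f_{\gamma})\neq0$. Computing $[e_{\gamma}\otimes s^a,\,f_{\gamma}\otimes s^{k-a}t^{-l}]$ and applying it to $v_{\Lambda}$ (using $e_{\gamma}\otimes s^a\in\torn$, then reducing $(f_{\gamma}\otimes s^{k-a}t^{-l})v_{\Lambda}$ by the already-proved real-root case for $\beta=\gamma+l\delta$, then recommuting the surviving $e_{\gamma}\otimes s^a$), one obtains a congruence of the shape $(h_{\gamma}\otimes s^k t^{-l})v_{\Lambda}+c_1(a,k)\,c(k,-l)v_{\Lambda}\in T$, where $T$ is exactly the target span on the right-hand side of (ii); the recommutation terms $c(a+m,-l)$ have bounded $s$-exponent $a+m\le N_l$ (choosing $a\in\{1,2\}$), so for $k$ outside $\{1,\dots,N_l\}$ the only appearance of $c(k,-l)$ is the direct bracket term, whose coefficient $c_1(a,k)$ is, by the cocycle in (\ref{eq:bracket}), proportional to $a$. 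This is the crux: since $c_1(a,k)$ depends affinely on $a$, taking two admissible values of $a$ and subtracting isolates $c(k,-l)v_{\Lambda}\in T$, which is (ii); feeding this back gives $(h_{\gamma}\otimes s^k t^{-l})v_{\Lambda}\in T$, and letting $\gamma$ range so that the $h_{\gamma}$ span $\frh$ yields the imaginary case of (i). The degenerate total degree $k=0$, where $c(0,-l)$ is not automatically in $T$, is handled by the companion identity coming from the $k+m=0$ case of (\ref{eq:bracket}).

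The main obstacle, and the only place where the toroidal setting genuinely departs from the affine/current theory of \cite{MR1850556,MR2271991} and from \cite{MR2017585}, is precisely this coupling in the last step: the infinite-dimensional center is not inert, and one cannot bound the $c(k,-l)$ independently of the $\frh\otimes s^k t^{-l}$. The affine dependence of the cocycle on the chosen exponent is what breaks the symmetry and lets one solve the resulting $2\times2$ system; verifying its nondegeneracy (that $c_1(1,k)\neq c_1(2,k)$) and checking that every auxiliary Cartan and central term stays within the prescribed finite range of $s$-degrees is where the real bookkeeping lies.
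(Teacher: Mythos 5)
Your argument is sound, but for part (ii) and the imaginary case of (i) it takes a genuinely different route from the paper, and a more elaborate one. The paper first establishes all of (i) — including $\beta=l\delta$ — exactly as in Chari--Le: for the imaginary case one writes $h_{\gamma}\otimes s^{k}t^{-l}=[x_{\gamma},x_{-\gamma}\otimes s^{k}t^{-l}]$ with $x_{\gamma}$ of $s$- and $t$-degree zero, so the cocycle in (\ref{eq:bracket}) contributes nothing and the real-root case applies verbatim with no central terms. Then for (ii) it brackets with $x_{\alpha}\otimes s$ instead: $[x_{\alpha}\otimes s,x_{-\alpha}\otimes s^{k}t^{-l}]=h_{\alpha}\otimes s^{k+1}t^{-l}+s^{k}t^{-l}ds$, so a \emph{single} identity isolates $c(k+1,-l)v_{\Lambda}$ as the difference of two terms already controlled by (i). In other words, the coupling you identify as the crux — forcing you to evaluate the bracket at two $s$-degrees $a\in\{1,2\}$ and solve a $2\times 2$ system using the affine dependence of the cocycle on $a$ — is real but avoidable: choosing the bracketing element of $s$-degree $0$ kills the central term entirely (decoupling the Cartan part), and then $s$-degree $1$ produces exactly the central element you want. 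Your subtraction trick is correct (the coefficient $al/k$ of $c(k,-l)$ is indeed linear in $a$, and the $k=0$ branch of (\ref{eq:bracket}) gives the coefficient $a$ there), so nothing fails; but note that the version of (i) for $\beta=l\delta$ that your route delivers is formally weaker than the statement — your right-hand side contains the terms $c(m,-l)A(\Lambda)v_{\Lambda}$, whereas the lemma asserts membership in $\sum_{m}(X_{-\beta}\otimes s^{m})A(\Lambda)v_{\Lambda}$ alone. That weaker form still suffices for the spanning statement in Proposition~\ref{prop:span}, but if you want the lemma as stated you should prove the imaginary case of (i) first, by the degree-zero bracket, before touching the central elements.
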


\begin{proof}
The assertion (i) is proved in the same way as \cite[Proposition~3.2 and Corollary~3.1]{MR2017585}.  

We prove (ii).
Take an arbitrary element $\alpha$ of $\Delta^+$ and fix root vectors $x_{\alpha} \in \frg_{\alpha}$ and $x_{-\alpha} \in \frg_{-\alpha}$ satisfying $(x_{\alpha},x_{-\alpha})=1$.
Then we have
\begin{equation*}
	\begin{split}
		(s^{k} t^{-l} ds)v_{\Lambda} &= \left( [x_{\alpha} \otimes s, x_{-\alpha} \otimes s^{k}t^{-l}] - h_{\alpha} \otimes s^{k+1} t^{-l} \right) v_{\Lambda} \\
		&= (x_{\alpha} \otimes s) (x_{-\alpha} \otimes s^{k}t^{-l}) v_{\Lambda} - (h_{\alpha} \otimes s^{k+1} t^{-l}) v_{\Lambda}.
	\end{split}
\end{equation*} 
We have
\[
	(x_{\alpha} \otimes s) (x_{-\alpha} \otimes s^{k}t^{-l}) v_{\Lambda} \in (x_{\alpha} \otimes s) \sum_{m=0}^{N(\alpha + l\delta)} (x_{-\alpha} \otimes s^{m} t^{-l}) A(\Lambda) v_{\Lambda}
\]
by (i).
The right-hand side is equal to
\[
	\sum_{m=0}^{N(\alpha + l\delta)} (h_{\alpha} \otimes s^{m+1} t^{-l} + s^m t^{-l} ds ) A(\Lambda) v_{\Lambda} = \sum_{m=1}^{N(\alpha + l\delta)+1} (h_{\alpha} \otimes s^{m} t^{-l} + c(m,-l) ) A(\Lambda) v_{\Lambda}.
\]
We have
\[
	(h_{\alpha} \otimes s^{k+1} t^{-l}) v_{\Lambda} \in \sum_{m=0}^{N(l\delta)} (h_{\alpha} \otimes s^{m} t^{-l}) A(\Lambda) v_{\Lambda}
\]
again by (i).
Hence we conclude that
\[
	(s^{k} t^{-l} ds) v_{\Lambda} \in \sum_{m=1}^{N_l} c(m,-l) A(\Lambda) v_{\Lambda} + \sum_{m=0}^{N_l} \left( \left( \aff^{(t)} \right)_{-l\delta} \otimes s^m\right) A(\Lambda) v_{\Lambda}
\]
if we put $N_l = \max(N(l\delta),N(\alpha+l\delta)+1)$.
\end{proof}

The following proposition is an analog of \cite[Proposition~1.2]{MR1850556} for the case of the affine Lie algebra and of \cite[Proposition~3.2 and Corollary~3.1]{MR2017585} for the quotient of $\tor$ modulo the elements $c(k,l)$ with $l \neq 0$ (cf.\ Remark \ref{rem:CL}).

\begin{prop}\label{prop:span}
For each positive root $\beta_j \in \affDelta^+$ and each positive integer $l >0$, there exist nonnegative integers $N(\beta_j)$ and $N_l$ such that the weight space $\glob(\Lambda)_{\Lambda-\beta}$ for $\beta \in \affQ^+$ is spanned by elements of the form
\begin{equation}
	(X_{-\beta_1} \otimes s^{k_1}) \cdots (X_{-\beta_a} \otimes s^{k_a}) \left( \prod_{j=1}^{b} c(m_j,-l_j) \right) A(\Lambda) v_{\Lambda}, \label{eq:span}
\end{equation}
where each $X_{-\beta_{j}}$ is a root vector of $\affnbar^{(t)}$ corresponding to a negative root $-\beta_j$ and each $l_j > 0$ is a positive integer satisfying $\beta = \sum_{j=1}^a \beta_j + \left(\sum_{j=1}^b l_j \right) \delta$ and $0 \leq k_j \leq N(\beta_j)$, $1 \leq m_j \leq N_{l_j}$.
A similar statement also holds for $\glob^+(\Lambda)_{\Lambda-\beta}$. 
\end{prop}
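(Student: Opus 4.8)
The plan is to begin from the PBW spanning set provided by the triangular decomposition $\tor = \tornbar \oplus \torh \oplus \torn$ and then to cut down the exponents of $s$ by repeated use of Lemma~\ref{lem:single}. First I would record that, since $\torn v_{\Lambda} = 0$, $c_s v_{\Lambda} = d_s v_{\Lambda} = 0$ and $U(\torh')v_{\Lambda} = A(\Lambda)v_{\Lambda} = \glob(\Lambda)_{\Lambda}$, the PBW theorem yields $\glob(\Lambda) = U(\tornbar)A(\Lambda)v_{\Lambda}$. Hence $\glob(\Lambda)_{\Lambda-\beta}$ is spanned over $A(\Lambda)$ by ordered monomials in the basis of $\tornbar = \affnbar^{(t)} \otimes \bbC[s^{\pm 1}] \oplus \bigoplus_{k \in \bbZ,\, l \leq -1} \bbC c(k,l)$, that is, by products of root vectors $X_{-\gamma} \otimes s^{k}$ ($\gamma \in \affDelta^+$) and central generators $c(k,-l)$ ($l \geq 1$) applied to $v_{\Lambda}$; comparing weights forces $\beta = \sum_j \beta_j + (\sum_j l_j)\delta$. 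As the $c(k,-l)$ are central, they may be gathered next to $v_{\Lambda}$, producing precisely the shape in \eqref{eq:span} save that the exponents $k_j, m_j$ are a priori unbounded, so everything reduces to bounding these exponents.

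Next I would dispose of the central factors first. Applying Lemma~\ref{lem:single}(ii) to the central generator sitting on $v_{\Lambda}$ rewrites each $c(k,-l)$ as an $A(\Lambda)$-combination of bounded central generators $c(m,-l)$ with $1 \leq m \leq N_l$ together with imaginary root vectors $\left(\aff^{(t)}\right)_{-l\delta} \otimes s^m = \frh \otimes s^m t^{-l}$ with $0 \leq m \leq N_l$. The latter are genuine factors of $\affnbar^{(t)} \otimes \bbC[s^{\pm 1}]$ of weight $-l\delta$ with bounded $s$-exponent, so this step trades one central factor for one factor of the same word length; the coefficients it produces can be pushed to the far right because $A(\Lambda)$ acts on $\glob(\Lambda)$ by $\tor'$-module endomorphisms and therefore commutes with left multiplication by $\tornbar$.

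The remaining task, bounding the $s$-exponents of the root-vector factors, I would carry out by induction on the word length $a$, with the induction hypothesis asserting that every monomial of length less than $a$, with arbitrary $s$-exponents, already lies in the claimed bounded span. Given such a monomial $(X_{-\beta_1} \otimes s^{k_1}) \cdots (X_{-\beta_a} \otimes s^{k_a})$ acting on $A(\Lambda)v_{\Lambda}$, I reduce the rightmost factor by Lemma~\ref{lem:single}(i), replacing $(X_{-\beta_a} \otimes s^{k_a})v_{\Lambda}$ by a combination of $(X_{-\beta_a} \otimes s^{m})A(\Lambda)v_{\Lambda}$ with $0 \leq m \leq N(\beta_a)$ and pulling the $A(\Lambda)$ out in front. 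To bound the next factor I commute it past the now-bounded one, writing $(X_{-\beta_{a-1}} \otimes s^{k_{a-1}})(X_{-\beta_a} \otimes s^{m}) = (X_{-\beta_a} \otimes s^{m})(X_{-\beta_{a-1}} \otimes s^{k_{a-1}}) + [X_{-\beta_{a-1}} \otimes s^{k_{a-1}}, X_{-\beta_a} \otimes s^{m}]$; the commutator term has word length $a-1$ and is absorbed by the induction hypothesis, while in the main term $X_{-\beta_{a-1}}$ is now adjacent to $v_{\Lambda}$ and is bounded by Lemma~\ref{lem:single}(i). Sweeping from right to left bounds every factor, and any final reordering needed to match \eqref{eq:span} again only creates shorter commutator terms; the base case $a = 0$ is $A(\Lambda)v_{\Lambda} = \glob(\Lambda)_{\Lambda}$.

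The delicate point, and the step I expect to require the most care, is the well-foundedness of this induction: by \eqref{eq:bracket} the commutators produced during straightening can themselves be central elements $c(k',l')$ with arbitrarily large $s$-exponent, so the recursion cannot be organized on the $s$-degree and must instead run on word length alone --- each straightening step strictly lowers the length, whereas each application of Lemma~\ref{lem:single} preserves it and only decreases exponents, which guarantees termination. Finally, the statement for $\glob^+(\Lambda)_{\Lambda-\beta}$ follows verbatim after replacing $\bbC[s^{\pm 1}]$ and $A(\Lambda)$ by $\bbC[s]$ and $A^+(\Lambda)$ and invoking the evident $\tor^+$-analog of Lemma~\ref{lem:single}; there all $s$-exponents are automatically nonnegative, so the conditions $0 \leq k_j$ and $1 \leq m_j$ hold from the outset.
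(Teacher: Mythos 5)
Your proof is correct and takes essentially the same route as the paper's: the PBW theorem gives the spanning set \eqref{eq:span} with no conditions on the exponents, and Lemma~\ref{lem:single} together with an induction on the word length $a+b$ cuts the exponents down to the ranges $0 \leq k_j \leq N(\beta_j)$, $1 \leq m_j \leq N_{l_j}$. The paper's proof is exactly this in two sentences; your write-up merely supplies the straightening details (commutator terms of strictly shorter length absorbed by the induction hypothesis, centrality of the $c(k,-l)$, and $A(\Lambda)$ acting by $\tor'$-endomorphisms) that the paper leaves implicit.
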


\begin{proof}
By the PBW theorem, we see that $\glob(\Lambda)_{\Lambda-\beta}$ is spanned by elements of the form as (\ref{eq:span}) without any conditions on $k_j$ and $m_j$.
Then we use Lemma~\ref{lem:single} to show the assertion by the induction on $a+b$.  
\end{proof}

Thus we establish Proposition~\ref{prop:weight} from Proposition~\ref{prop:span}.
We also have the following.

\begin{prop}\label{prop:character}
Let $\mathbf{a}$ be a maximal ideal of $A(\Lambda)$ and regard it also as a maximal ideal of $A^{+}(\Lambda)$.
Then we have $\ch_p \loc^+(\Lambda,\mathbf{a}) \geq \ch_p \loc(\Lambda,\mathbf{a})$.
\end{prop}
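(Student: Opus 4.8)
The plan is to exhibit a surjection of $\tor^+$-modules $\loc^+(\Lambda,\mathbf{a}) \twoheadrightarrow \loc(\Lambda,\mathbf{a})$ and to read off the character inequality from it. Since $\tor^+ \subset \tor'$, the module $\loc(\Lambda,\mathbf{a})$ restricts to a $\tor^+$-module, and by Proposition~\ref{prop:weight}(iii) it is generated over $\tor^+$ by the single vector $v_{\Lambda,\mathbf{a}}$. The core of the argument is therefore to check that $v_{\Lambda,\mathbf{a}}$ satisfies the defining relations of the local Weyl module $\loc^+(\Lambda,\mathbf{a})$, so that the universal property produces the desired homomorphism.

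First I would verify the relations one by one. The relations $e_{i,k} v_{\Lambda,\mathbf{a}} = 0$ ($i \in \affI$, $k \in \bbZ$) hold in $\loc(\Lambda,\mathbf{a})$, so in particular they hold for $k \geq 0$; the weight condition $h v_{\Lambda,\mathbf{a}} = \langle h,\Lambda\rangle v_{\Lambda,\mathbf{a}}$ ($h \in \affh^{(t)}$) and the relations $f_i^{\langle h_i,\Lambda\rangle+1} v_{\Lambda,\mathbf{a}} = 0$ ($i \in \affI$) hold verbatim. The one point that needs care is the action of $A^+(\Lambda)$. Under the inclusion $A^+(\Lambda) \hookrightarrow A(\Lambda)$ the generator $\sum_{m} z_{i,m}^k$ maps to $h_{i,k}$ for $k \geq 0$, so the maximal ideal $\mathbf{a}$ of $A(\Lambda)$ restricts to a maximal ideal $\mathbf{a} \cap A^+(\Lambda)$ of $A^+(\Lambda)$ carrying the same evaluation data; and since $\loc(\Lambda,\mathbf{a})$ is the specialization $\glob(\Lambda) \otimes_{A(\Lambda)} \bbC_{\mathbf{a}}$, we have $h_{i,k} v_{\Lambda,\mathbf{a}} = \bar{p}_{k}^{(i)} v_{\Lambda,\mathbf{a}}$ for every $i \in \affI$ and $k \geq 0$. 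This is exactly the $A^+(\Lambda)$-action imposed in $\loc^+(\Lambda,\mathbf{a})$.

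Granting these relations, the universal property of $\loc^+(\Lambda,\mathbf{a})$ furnishes a $\tor^+$-module homomorphism $\loc^+(\Lambda,\mathbf{a}) \to \loc(\Lambda,\mathbf{a})$ sending $v_{\Lambda,\mathbf{a}}^+$ to $v_{\Lambda,\mathbf{a}}$, and this map is surjective precisely because $\loc(\Lambda,\mathbf{a}) = U(\tor^+) v_{\Lambda,\mathbf{a}}$. Finally, since $\aff^{(t)} \subset \tor^+$, the homomorphism is in particular a map of $\aff^{(t)}$-modules and hence respects the weight space decomposition; surjectivity then gives $\dim \loc^+(\Lambda,\mathbf{a})_{\Lambda-\beta} \geq \dim \loc(\Lambda,\mathbf{a})_{\Lambda-\beta}$ for every $\beta \in \affQ^+$. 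Both modules have finite-dimensional weight spaces by Proposition~\ref{prop:weight}, so their $p$-characters are defined and the inequality $\ch_p \loc^+(\Lambda,\mathbf{a}) \geq \ch_p \loc(\Lambda,\mathbf{a})$ follows termwise. The genuinely nontrivial ingredient is the generation statement $\loc(\Lambda,\mathbf{a}) = U(\tor^+) v_{\Lambda,\mathbf{a}}$, which is the main obstacle but is already supplied by Proposition~\ref{prop:weight}(iii); everything else is a formal consequence of the universal property.
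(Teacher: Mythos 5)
Your proof is correct and follows essentially the same route as the paper: the paper's proof is precisely the observation that $v_{\Lambda,\mathbf{a}}$ satisfies the defining relations of $\loc^+(\Lambda,\mathbf{a})$, yielding a $\tor^+$-homomorphism $\loc^+(\Lambda,\mathbf{a}) \to \Res \loc(\Lambda,\mathbf{a})$ sending $v_{\Lambda,\mathbf{a}}^+ \mapsto v_{\Lambda,\mathbf{a}}$, which is surjective by Proposition~\ref{prop:weight}(iii). You have simply spelled out the relation-checking and the compatibility of the $A^+(\Lambda)$-action that the paper leaves implicit.
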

\begin{proof}
We have a $\tor^+$-homomorphism $\loc^+(\Lambda,\mathbf{a}) \to \Res \loc(\Lambda,\mathbf{a})$ assigning $v_{\Lambda,\mathbf{a}}^+ \mapsto v_{\Lambda,\mathbf{a}}$.
It is surjective by Proposition~\ref{prop:weight} (iii).
\end{proof}

\subsection{Upper bound for the level one Weyl module}

In this subsection, we consider the case $\Lambda=\Lambda_0$.
The ring $A(\Lambda_0)$ is identified with $\bbC[z^{\pm 1}]$ and the action on $\glob(\Lambda_0)$ is given by 
\[
	z^k (X v_{\Lambda_0}) = X (h_{0,k} v_{\Lambda_0})
\]
for $X \in U(\tor')$.
This identification induces $A^+(\Lambda_0) = \bbC[z]$.

\begin{lem}\label{lem:h_{i,k}}
We have $h_{i,k} v_{\Lambda_0} = 0$ for $i \in I$ and $k \in \bbZ$.
\end{lem}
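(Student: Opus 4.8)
The plan is to realize $h_{i,k}$ for $i \in I$ as a commutator and then exploit that $\Lambda_0$ pairs trivially with $h_i$ for $i \in I$. The defining relations of $\frt \cong \tor$ give
\[
	[e_{i,k}, f_{i,0}] = h_{i,k} + d_i^{-1} k \delta_{k,0} c_s = h_{i,k},
\]
where the central correction term vanishes because its coefficient $k\delta_{k,0}$ is zero for every $k$. Hence it suffices to evaluate $[e_{i,k}, f_{i,0}] v_{\Lambda_0}$, and no contribution of the center $c_s$ ever arises.

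First I would record the two inputs coming directly from the definition of $\glob(\Lambda_0)$. For $i \in I$ we have $\langle h_i, \Lambda_0 \rangle = \delta_{i0} = 0$, so the relation $f_i^{\langle h_i,\Lambda_0\rangle + 1} v_{\Lambda_0} = 0$ becomes $f_i v_{\Lambda_0} = 0$; under the identification $f_{i,0} = f_i$ this reads $f_{i,0} v_{\Lambda_0} = 0$. Second, the relation $e_{i,k} v_{\Lambda_0} = 0$ holds for all $i \in \affI$ and all $k \in \bbZ$.

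Then I would simply expand the commutator against $v_{\Lambda_0}$:
\[
	h_{i,k} v_{\Lambda_0} = [e_{i,k}, f_{i,0}] v_{\Lambda_0} = e_{i,k}\bigl(f_{i,0} v_{\Lambda_0}\bigr) - f_{i,0}\bigl(e_{i,k} v_{\Lambda_0}\bigr),
\]
and both terms vanish by the two inputs above, giving $h_{i,k} v_{\Lambda_0} = 0$. There is essentially no obstacle: the statement is an immediate consequence of the defining relations once $h_{i,k}$ is written as $[e_{i,k}, f_{i,0}]$, the only point worth noting being that the central term plays no role since it carries the factor $k\delta_{k,0}$.
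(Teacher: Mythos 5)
Your argument is correct and is exactly the paper's (very terse) proof spelled out: the relations $e_{i,k}v_{\Lambda_0}=0$ and $f_i v_{\Lambda_0}=0$ for $i\in I$, combined with $h_{i,k}=[e_{i,k},f_{i,0}]$, give the claim. Your extra observation that the central correction $d_i^{-1}k\delta_{k,0}c_s$ vanishes identically (and in any case $c_s v_{\Lambda_0}=0$) is a correct and harmless refinement.
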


\begin{proof}
The defining relations $e_{i,k} v_{\Lambda_0}=0$ and $f_i v_{\Lambda_0} = 0$ for $i \in I$ imply the assertion. 
\end{proof}

Recall that $\sum_{i \in \affI} h_{i,k} = s^k t^{-1} dt$.
By Lemma~\ref{lem:h_{i,k}}, we see that the action of $A(\Lambda_0)$ on $\glob(\Lambda_0)$ is given by $z^k \mapsto s^k t^{-1} dt$.
In particular, $z$ acts by $c(1,0)=st^{-1}dt$.

We have defined the local Weyl modules $\loc(\Lambda_0,a)$ for $a \in \bbC^{\times}$ and $\loc^+(\Lambda_0,a)$ for $a \in \bbC$ by
\[
	\loc(\Lambda_0,a) = \glob(\Lambda_0) \otimes_{A(\Lambda_0)} \bbC_a, \quad \loc^+(\Lambda_0,a) = \glob^+(\Lambda_0) \otimes_{A^+(\Lambda_0)} \bbC_a.
\]
\begin{prop}\label{prop:independent}
The p-character $\ch_p \loc^+(\Lambda_0,a)$ is independent of $a \in \bbC$.
\end{prop}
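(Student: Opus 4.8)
The plan is to exploit the automorphism $\tau_a$ of $\tor^+$ induced by $s \mapsto s+a$. The key observation is that $\tau_a$ fixes $\affh^{(t)}$ pointwise, so for any $\tor^+$-module $M$ the pullback $\tau_a^*M$ has the same $\aff^{(t)}$-weight spaces as $M$; in particular $\ch_p(\tau_a^*M) = \ch_p M$ whenever the $p$-character is defined (which holds here by Proposition~\ref{prop:weight}). It therefore suffices to produce, for each $a \in \bbC$, an isomorphism of $\tor^+$-modules $\loc^+(\Lambda_0,0) \cong \tau_a^*\loc^+(\Lambda_0,-a)$.

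First I would show that the generator $v_{\Lambda_0}^+$, regarded inside $\tau_a^*\glob^+(\Lambda_0)$, still satisfies the defining relations of $\glob^+(\Lambda_0)$. The only relation requiring attention is $e_{i,k}v = 0$ ($i \in \affI$, $k \geq 0$): here $\tau_a(e_{i,k})$ is a linear combination of $e_{i,0},\dots,e_{i,k}$, since $(s+a)^k$ is a combination of $s^0,\dots,s^k$, so it annihilates $v_{\Lambda_0}^+$. The Cartan relations and $f_i^{\langle h_i,\Lambda_0\rangle+1}v = 0$ are immediate because $\tau_a$ fixes each $h \in \affh^{(t)}$ and each $f_i = f_{i,0}$. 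By the universal property this gives a surjection $\glob^+(\Lambda_0) \to \tau_a^*\glob^+(\Lambda_0)$ fixing the generator; running the same argument with $\tau_{-a}$ and composing shows a surjective endomorphism fixing the cyclic generator, hence the identity, so the map is an isomorphism.

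Next I would track the $A^+(\Lambda_0) = \bbC[z]$-module structure under this isomorphism, recalling that $z$ acts by the central element $c(1,0) = s t^{-1}dt$. The crucial computation is $\tau_a(c(1,0)) = (s+a)t^{-1}dt = c(1,0) + a c_t$, and since $c_t$ is central and acts on the level-one module $\glob^+(\Lambda_0)$ by the scalar $\langle c_t,\Lambda_0\rangle = 1$, the isomorphism intertwines $z$ with $z+a$. Hence it carries $z\,\glob^+(\Lambda_0)$ onto $(z+a)\glob^+(\Lambda_0)$ and descends to $\loc^+(\Lambda_0,0) \cong \tau_a^*\loc^+(\Lambda_0,-a)$. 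Taking $p$-characters and letting $a$ range over $\bbC$ then yields $\ch_p\loc^+(\Lambda_0,b) = \ch_p\loc^+(\Lambda_0,0)$ for all $b$, which is the assertion.

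The main obstacle is bookkeeping rather than conceptual. One must check that the entire infinite family of relations $e_{i,k}v=0$ is stable under $\tau_a$, which rests on $(s+a)^k$ and $s^k$ spanning the same subspace of $\bbC[s]$, and one must correctly account for the central term $a c_t$ produced by $\tau_a(c(1,0))$: it is precisely this term, together with $c_t$ acting by the nonzero scalar $1$, that realizes the shift $z \mapsto z+a$ and so matches the two local Weyl modules.
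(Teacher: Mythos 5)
Your argument is correct and rests on the same key idea as the paper's own proof, namely twisting by the translation automorphism $\tau_a$ induced by $s \mapsto s+a$. The paper implements this directly at the level of the local Weyl module by matching its defining relations (where $h_{0,k}v = a^k v$ encodes the specialization at $a$), whereas you work with the global module and track the $\bbC[z]$-action through $\tau_a(c(1,0)) = c(1,0) + a c_t$ together with $c_t$ acting by $1$; both computations are valid and yield $\tau_a^*\loc^+(\Lambda_0,0) \cong \loc^+(\Lambda_0,a)$.
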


\begin{proof}
The defining relations of $\loc^+(\Lambda_0,a)$ are given by
\begin{gather*}
	(\torn \cap \tor^+) v_{\Lambda_0,a}^+ = 0,\quad h_{i,k} v_{\Lambda_0,a}^+ = \delta_{i,0} a^k v_{\Lambda_0,a}^+ \ (i \in \affI, k \geq 0), \quad d_t v_{\Lambda_0,a}^+ = 0,\\
	f_0^2 v_{\Lambda_0,a}^+ = 0,\quad f_i v_{\Lambda_0,a}^+ = 0 \ (i \in I). 
\end{gather*}
Hence we have $\tau_a^*\loc^+(\Lambda_0,0) \cong \loc^+(\Lambda_0,a)$, where $\tau_a$ is the automorphism of $\tor^+$ defined in Section~\ref{subsection:auto}.
This proves the assertion.  
\end{proof}

We put 
\[
	W(\Lambda_0)=\loc^+(\Lambda_0,0) = \glob^+(\Lambda_0) \otimes_{A^+(\Lambda_0)} \bbC_0
\]
and denote its highest weight vector $v_{{\Lambda_0},0}^+$ by $v_0$.
This $W(\Lambda_0)$ is regarded as a graded $\tor^+$-module by setting $\deg v_0 = 0$. 

\begin{lem}\label{lem:f}
We have $f_{i,k} v_0 = 0$ for any $i \in \affI$ and $k \geq 1$.
\end{lem}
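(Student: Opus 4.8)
The plan is to show that, for $k \geq 1$, the vector $f_{i,k}v_0$ is annihilated by the raising operator $e_{i,0}$ while being an $h_{i,0}$-eigenvector of \emph{negative} eigenvalue, so that integrability of $W(\Lambda_0)$ as an $\aff^{(t)}$-module forces it to vanish.

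First I would record the relations for the generator $v_0=v_{\Lambda_0,0}^+$ read off from the proof of Proposition~\ref{prop:independent} at $a=0$: we have $e_{i,l}v_0=0$ for all $i\in\affI$ and $l\geq 0$, together with $h_{i,0}v_0=\delta_{i,0}v_0$ and $h_{i,l}v_0=0$ for $l\geq 1$. Fix $i\in\affI$ and $k\geq 1$, and set $w=f_{i,k}v_0$. Using the defining relations of $\frs$ I then compute
\[
	e_{i,0} w = [e_{i,0},f_{i,k}]v_0 = h_{i,k}v_0 = 0,\qquad
	h_{i,0} w = [h_{i,0},f_{i,k}]v_0 + f_{i,k}h_{i,0}v_0 = (-a_{ii}+\delta_{i,0})\,w,
\]
so that $w$ is a highest weight vector for $\mathfrak{sl}_2^{(i)}=\langle e_{i,0},h_{i,0},f_{i,0}\rangle$ of $h_{i,0}$-weight $-2+\delta_{i,0}$, which is negative in every case.

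To conclude I would use integrability. As a quotient of the global Weyl module, $W(\Lambda_0)$ is generated by $v_0$, which satisfies $e_{i,0}v_0=0$ and $f_i^{\langle h_i,\Lambda_0\rangle+1}v_0=0$; since $\ad e_i$ and $\ad f_i$ act locally nilpotently on $\tor^+$, local nilpotency of $e_i,f_i$ propagates from $v_0$ to all of $U(\tor^+)v_0=W(\Lambda_0)$, so $W(\Lambda_0)$ is integrable over $\aff^{(t)}$ and each $\mathfrak{sl}_2^{(i)}$ acts locally finitely. In an integrable $\mathfrak{sl}_2$-module a vector killed by the raising operator with strictly negative weight must be zero; hence $w=f_{i,k}v_0=0$.

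I would finally remark that for $i\in I$ the conclusion is immediate without integrability: here $\langle h_i,\Lambda_0\rangle=0$ gives $f_{i,0}v_0=f_iv_0=0$, and the relation $[h_{i,k},f_{i,0}]=-a_{ii}f_{i,k}$ yields $f_{i,k}v_0=-\tfrac{1}{2}\bigl(h_{i,k}f_{i,0}v_0-f_{i,0}h_{i,k}v_0\bigr)=0$ at once. The main obstacle is precisely the affine node $i=0$: there $f_{0,0}v_0=f_0v_0\neq 0$ because $\langle h_0,\Lambda_0\rangle=1$, so this bracket manipulation collapses to a tautology and one genuinely needs the integrability input above --- equivalently, the fact that the level one local Weyl module for the current algebra $\mathfrak{sl}_2\otimes\bbC[s]$ attached to the $0$-node is the two-dimensional evaluation module at $s=0$.
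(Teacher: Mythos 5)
Your proof is correct, but for the essential case $i=0$ it takes a genuinely different route from the paper. The paper's proof is a direct two-line computation: for $i\in I$ it combines $f_i v_0=0$ with $h_{i,k}v_0=0$ via $[h_{i,k},f_i]=-2f_{i,k}$ (exactly your closing remark), and for $i=0$ it applies the nonzero mode $e_{0,k}$ to the relation $f_0^2v_0=0$, expanding $0=e_{0,k}f_0^2v_0=(-2f_{0,k}+2f_0h_{0,k})v_0$ and using $h_{0,k}v_0=0$ for $k\geq 1$. You instead work only with the zero modes $e_{i,0},h_{i,0},f_{i,0}$: you verify that $w=f_{i,k}v_0$ is annihilated by $e_{i,0}$ (via $h_{i,k}v_0=0$) and carries the strictly negative $h_{i,0}$-eigenvalue $-2+\delta_{i,0}$, and then invoke integrability of $W(\Lambda_0)$ as an $\aff^{(t)}$-module, established by propagating local nilpotency of $e_i,f_i$ from the cyclic vector using local nilpotency of $\ad e_i,\ad f_i$ on $\tor^+$; all of these steps are legitimate, and the $\mathfrak{sl}_2$ fact you use (a vector of negative integral weight killed by the raising operator in a module on which $f$ is locally nilpotent must vanish) is standard. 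The trade-off is that the paper's version is shorter and entirely self-contained, needing nothing beyond the defining relations of $W(\Lambda_0)$, whereas yours front-loads the integrability machinery (which the paper never states explicitly for $W(\Lambda_0)$, though it is implicit in its use of $T_0,T_\theta$ later) in exchange for a conceptually transparent explanation of why the $i=0$ case, where $f_0v_0\neq 0$, still works: the weight $\langle h_0,\Lambda_0\rangle-2=-1$ is negative precisely because the level is one.
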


\begin{proof}
The assertion for $i \in I$ follows from $f_i v_0 =0$ and $h_{i,k} v_0 =0$.
The assertion for $i = 0$ follows from
\[
	0 = e_{0,k} f_0^2 v_0 = [e_{0,k}, f_0^2] v_0 = (-2f_{0,k} + 2 f_0 h_{0,k}) v_0 
\]
and $h_{0,k} v_0 =0$ for $k \geq 1$.
\end{proof}

\begin{lem}\label{lem:key}
Let $k \geq 1$.
We have
\begin{enumerate}
\item
\[
	(e_{\theta} \otimes s^k t^{-l}) v_0 = \begin{cases}
		0 & \text{if } l \leq k,\\
		\displaystyle\sum_{m=1}^{l-k} c(k,-l+m) (e_{\theta} \otimes t^{-m}) v_0 & \text{if } l > k,
	\end{cases}
\]
\item
\[
	(s^k t^{-l} ds) v_0 = \begin{cases}
		0 & \text{if } l \leq k,\\
		\displaystyle\sum_{m=1}^{l-k} c(k,-l+m) (t^{-m}ds) v_0 & \text{if } l > k.
	\end{cases}
\]
\end{enumerate}
\end{lem}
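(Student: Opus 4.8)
The plan is to prove (i) and (ii) together by induction on $l$, simultaneously for all $k\ge 1$, by analysing how the lowering operator $e_\theta\otimes s^kt^{-l}$ interacts with the affine $\mathfrak{sl}_2$ attached to the highest root $\theta$ in the variable $t$. The base case $l=1$ of (i) is exactly Lemma~\ref{lem:f}: since $e_\theta\otimes s^kt^{-1}=f_{0,k}$ and $1\le k$, both sides vanish. For (ii) I would not argue in isolation but reduce to (i): writing out the double loop bracket~(\ref{eq:bracket}) gives
\[
	s^kt^{-l}ds = [f_\theta\otimes s,\ e_\theta\otimes s^kt^{-l}] + h_\theta\otimes s^{k+1}t^{-l},
\]
so that, once one controls the $h_\theta$-term, part (ii) is a consequence of part (i) and of the companion statement for $h_\theta\otimes s^{k+1}t^{-l}$ that the induction produces alongside (i).

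The engine of the induction is the relation, again read off from~(\ref{eq:bracket}) with $e_0=f_\theta\otimes t$,
\[
	[e_0,\ e_\theta\otimes s^kt^{-l}] = -\,h_\theta\otimes s^kt^{1-l} + c(k,1-l),
\]
in which the nonzero pairing $(f_\theta,e_\theta)=1$ is precisely what forces a central element $c(k,1-l)$ to appear. Applying this to $v_0$ and using $e_0v_0=0$ relates $e_0\,(e_\theta\otimes s^kt^{-l})v_0$ to the ``Cartan'' vector $(h_\theta\otimes s^kt^{1-l})v_0$ and to $c(k,1-l)v_0$; I would therefore carry the induction on the coupled family of vectors $(e_\theta\otimes s^kt^{-l})v_0$ and $(h_\theta\otimes s^kt^{-l})v_0$. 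Pushing the $t$-exponent down, these boundary terms eventually land in $s$-degree $0$, i.e.\ inside the basic representation $L(\Lambda_0)=U(\aff^{(t)})v_0$, where everything is explicit: $e_0 f_0 v_0 = h_0 v_0 = v_0$ and $e_0\,(e_\theta\otimes t^{-m})v_0 = -(h_\theta\otimes t^{1-m})v_0$ for $m\ge 2$. Together with $(h_\theta\otimes s^k)v_0=0$ (Lemma~\ref{lem:h_{i,k}}) and the Serre relation $f_0^2v_0=0$, these identities are what make the recursion terminate and assemble the central corrections into the asserted sum $\sum_{m=1}^{l-k}c(k,-l+m)(e_\theta\otimes t^{-m})v_0$; the transition between the vanishing regime $l\le k$ and the nonvanishing regime $l>k$ is exactly the point at which the central elements can no longer absorb the full $s$-degree.

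The main obstacle I anticipate is a genuine circularity. Every bracket internal to the $\theta$-triple that uses only $h_\theta$ and $e_\theta$ has vanishing central structure constant, because $(h_\theta,e_\theta)=(e_\theta,e_\theta)=0$; consequently any attempt to write $e_\theta\otimes s^kt^{-l}$ as such a bracket and re-expand merely reproduces the same vector and yields no information. The reduction must therefore be driven by $e_0=f_\theta\otimes t$, which does carry the central term but converts an $e_\theta$-vector into an $h_\theta$-vector, so the $\mathcal{E}$- and $\mathcal{H}$-type vectors genuinely have to be treated at once. To supply the independent input that breaks the circularity I would use Lemma~\ref{lem:induction}: the automorphism $T_0T_\theta$ is a lift of an affine Weyl translation and sends $e_\theta\otimes s^kt^{-l+2}$ to $e_\theta\otimes s^kt^{-l}$, whence
\[
	(e_\theta\otimes s^kt^{-l})v_0 = T_0T_\theta\bigl((e_\theta\otimes s^kt^{-l+2})\,w\bigr),\qquad w=(T_0T_\theta)^{-1}v_0,
\]
so the problem lowers the $t$-exponent by $2$ at the cost of replacing $v_0$ by the extremal vector $w$. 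The delicate part is then to compute $w$ explicitly: since $T_0^{-1}v_0=f_0v_0$, the vector $w=T_\theta^{-1}(f_0v_0)$ is a concrete extremal weight vector of $L(\Lambda_0)$, and pinning it down (using integrability and the level-one relations) is where the real work lies before the central corrections can be matched term by term against the stated formula.
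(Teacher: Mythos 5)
Your plan is essentially the paper's own argument: simultaneous induction on $l$ for (i) and (ii), with the base case $l\le 1$ from Lemma~\ref{lem:f}, the step driven by Lemma~\ref{lem:induction} via $(e_\theta\otimes s^kt^{-l})v_0=T_0T_\theta\bigl((e_\theta\otimes s^kt^{-l+2})\,T_\theta^{-1}(f_0)v_0\bigr)$ (the paper resolves your vector $w$ simply as $T_\theta^{-1}(f_0)v_0=-(f_\theta\otimes t^{-1})v_0$ since $T_\theta$ fixes $v_0$, and then commutes $-f_\theta\otimes t^{-1}$ past $e_\theta\otimes s^kt^{-l+2}$ to produce the central correction $c(k,-l+1)$), and (ii) deduced from (i) exactly through your identity $s^kt^{-l}ds=[f_\theta\otimes s,e_\theta\otimes s^kt^{-l}]+h_\theta\otimes s^{k+1}t^{-l}$ with the $h_\theta$-term controlled by (i) applied with $k+1$ in place of $k$. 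The auxiliary device of tracking a coupled family of $h_\theta$-vectors via $e_0$ is not needed in the paper's execution, but the essential inputs and the structure of the induction coincide.
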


\begin{proof}
We prove the assertions (i) and (ii) by induction on $l$.

For $l \leq 0$, $e_{\theta} \otimes s^k t^{-l}$ is an element of $\torn \cap \tor^+$, hence it kills $v_0$.
For $l = 1$, $e_{\theta} \otimes s^k t^{-1} = f_{0,k}$ kills $v_0$ by Lemma~\ref{lem:f}.
Then we have
\begin{equation*}
	\begin{split}
		(s^k t^{-l} ds)v_0 = \left( [f_{\theta} \otimes s, e_{\theta} \otimes s^k t^{-l}] - [f_{\theta}, e_{\theta} \otimes s^{k+1}t^{-l}] \right) v_0 =0
	\end{split}
\end{equation*}
for $l \leq 1$.
We thus have proved (i) and (ii) for $l \leq 1$.

Let $l \geq 2$.
We assume the assertions (i) and (ii) for all $l' < l$.
By Lemma~\ref{lem:induction}, we have
\begin{equation}
	\begin{split}
		(e_{\theta} \otimes s^k t^{-l}) v_0 &= T_0 T_{\theta} \left( (e_{\theta} \otimes s^k t^{-l+2}) T_{\theta}^{-1} T_0^{-1} v_0 \right) \\
		&= T_0 T_{\theta} \left( (e_{\theta} \otimes s^k t^{-l+2}) T_{\theta}^{-1} (f_0 v_0) \right) \\
		&= T_0 T_{\theta} \left( (e_{\theta} \otimes s^k t^{-l+2}) T_{\theta}^{-1} (f_0) v_0 \right) \\
		&= T_0 T_{\theta} \left( T_{\theta}^{-1}(f_0)(e_{\theta} \otimes s^k t^{-l+2}) v_0 + [e_{\theta} \otimes s^k t^{-l+2}, T_{\theta}^{-1} (f_0)] v_0 \right). \label{eq:induction}
	\end{split}
\end{equation}
We have
\begin{equation*}
	\begin{split}
		[e_{\theta} \otimes s^k t^{-l+2}, T_{\theta}^{-1} (f_0)] &= [e_{\theta} \otimes s^k t^{-l+2}, -f_{\theta} \otimes t^{-1}] \\
		&=- \left( [e_{\theta} \otimes s^k t^{-l+1}, f_{\theta}] + c(k,-l+1) \right) \\
		&= [f_{\theta}, e_{\theta} \otimes s^k t^{-l+1}] - c(k,-l+1).
	\end{split}
\end{equation*}
Put
\[
	A= T_{\theta}^{-1}(f_0)(e_{\theta} \otimes s^k t^{-l+2}) v_0, \quad B= f_{\theta}(e_{\theta} \otimes s^k t^{-l+1}) v_0. 
\]
Then (\ref{eq:induction}) is equal to $T_0 T_{\theta}(A+B-c(k,-l+1)v_0)$.
By the induction assumption, we have
\[
	A= T_{\theta}^{-1}(f_0) \sum_{m=1}^{l-2-k} c(k,-l+2+m)  (e_{\theta} \otimes t^{-m}) v_0,
\]
\begin{equation*}
	\begin{split}
		B= f_{\theta} \sum_{m=1}^{l-1-k} c(k,-l+1+m)  (e_{\theta} \otimes t^{-m}) v_0 = f_{\theta} \sum_{m=0}^{l-2-k} c(k,-l+2+m)  (e_{\theta} \otimes t^{-m-1}) v_0.
	\end{split}
\end{equation*}
Then (\ref{eq:induction}) is equal to
\begin{multline}
		T_0 T_{\theta} \Bigg( \sum_{m=1}^{l-2-k} c(k,-l+2+m) \Big( T_{\theta}^{-1}(f_0) (e_{\theta} \otimes t^{-m}) + f_{\theta} (e_{\theta} \otimes t^{-m-1}) \Big) v_0 \\
		+ c(k,-l+2) f_{\theta} (e_{\theta} \otimes t^{-1}) v_0 - c(k,-l+1) v_0 \Bigg) \label{eq:induction2}
\end{multline}
if $l \geq k+2$ and to $T_0 T_{\theta}(- c(k,-l+1) v_0)$ if $l \leq k+1$.

We prove (i) for $l$.
First consider the case $l \leq k$.
In this case, we have
\begin{equation*}
	(e_{\theta} \otimes s^k t^{-l}) v_0 = T_0 T_{\theta}(- c(k,-l+1) v_0) = \dfrac{k}{-l+1} T_0 T_{\theta}( (s^{k-1} t^{-(l-1)} ds) v_0) = 0
\end{equation*}
by the induction assumption.
Hence (i) holds for $l$.
Next consider the case $l = k+1$.
In this case, we have
\begin{equation*}
	(e_{\theta} \otimes s^k t^{-l}) v_0 = T_0 T_{\theta}(- c(k,-l+1) v_0) = - c(k,-l+1) T_0 T_{\theta}(v_0).
\end{equation*}
Since we have $T_0T_{\theta} (v_0)=-f_0 v = -(e_{\theta} \otimes t^{-1})v_0$, (i) holds for $l=k+1$.
Finally consider the case $l \geq k+2$.
The equality (\ref{eq:induction}) is valid even for $k=0$ and hence we have
\[
	(e_{\theta} \otimes t^{-m-2}) v_0 = T_0 T_{\theta} \Bigg( \Big( T_{\theta}^{-1} (f_0) (e_{\theta} \otimes t^{-m}) + f_{\theta} (e_{\theta} \otimes t^{-m-1}) \Big) v_0 \Bigg)
\]
for each $m$.
This implies that (\ref{eq:induction2}) is equal to
\begin{multline*}
		\sum_{m=1}^{l-2-k} c(k,-l+2+m) (e_{\theta} \otimes t^{-m-2}) v_0\\
		+ c(k,-l+2) T_0 T_{\theta} ( f_{\theta} (e_{\theta} \otimes t^{-1}) v_0) + c(k,-l+1) (e_{\theta} \otimes t^{-1}) v_0.
\end{multline*}
Since we can easily show $T_0 T_{\theta} ( f_{\theta} (e_{\theta} \otimes t^{-1}) v_0) = (e_{\theta} \otimes t^{-2})v_0$, (i) is proved for $l$.

We prove (ii) for $l$.
By (i), we have
\begin{equation*}
	\begin{split}
		&(s^k t^{-l} ds)v_0 = \left( [f_{\theta} \otimes s, e_{\theta} \otimes s^k t^{-l}] - [f_{\theta}, e_{\theta} \otimes s^{k+1}t^{-l}] \right) v_0\\
		&= (f_{\theta} \otimes s) \sum_{m=1}^{l-k} c(k,-l+m) (e_{\theta} \otimes t^{-m}) v_0 - f_{\theta} \sum_{n=1}^{l-(k+1)} c(k+1,-l+n) (e_{\theta} \otimes t^{-n}) v_0 
	\end{split}
\end{equation*}
if $l > k$ and $(s^k t^{-l} ds)v_0 = 0$ otherwise.
Therefore we may assume $l > k$.  
We have
\begin{equation*}
	\begin{split}
		(f_{\theta} \otimes s) (e_{\theta} \otimes t^{-m}) v_0 &= [f_{\theta} \otimes s,e_{\theta} \otimes t^{-m}]v_0 \\
		&= \left( [f_{\theta}, e_{\theta} \otimes s t^{-m}] + t^{-m}ds \right) v_0 \\
		&= f_{\theta} (e_{\theta} \otimes s t^{-m}) v_0 + (t^{-m}ds) v_0 \\
		&= f_{\theta} \sum_{n=1}^{m-1} c(1,-m+n)(e_{\theta} \otimes t^{-n}) v_0 + (t^{-m}ds) v_0.
	\end{split}
\end{equation*}
We claim that
\[
	\sum_{m=1}^{l-k} c(k,-l+m) \sum_{n=1}^{m-1} c(1,-m+n)(e_{\theta} \otimes t^{-n}) v_0 = \sum_{n=1}^{l-(k+1)} c(k+1,-l+n)(e_{\theta} \otimes t^{-n}) v_0
\]
holds.
Indeed this equality is obtained by applying $h_{\theta} \otimes s$ to both sides of (i).
Hence we conclude
\begin{equation*}
	\begin{split}
		(s^k t^{-l}ds) v_0 &= \sum_{m=1}^{l-k} c(k,-l+m) \Bigg( f_{\theta} \sum_{n=1}^{m-1} c(1,-m+n)(e_{\theta} \otimes t^{-n}) v_0 + (t^{-m}ds) v_0 \Bigg)\\
		&\qquad - f_{\theta} \sum_{n=1}^{l-(k+1)} c(k+1,-l+n) (e_{\theta} \otimes t^{-n}) v_0 \\
		&= \sum_{m=1}^{l-k} c(k,-l+m) (t^{-m}ds) v_0.
	\end{split}
\end{equation*}
\end{proof}

We define the subalgebra $\bar{C}$ of $U(\tor^+)$ to be generated by $c(k,-l)$ ($k \geq 1$, $l \geq 1$).
Let $\bar{C}_1$ be the subalgebra of $\bar{C}$ generated by $c(1,-l)$ ($l \geq 1$).

\begin{lem}\label{lem:degree_one}
We have $\bar{C} v_0 = \bar{C}_1 v_0$. 
\end{lem}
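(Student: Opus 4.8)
The reverse inclusion $\bar{C}_1 v_0 \subseteq \bar{C} v_0$ is immediate from $\bar{C}_1 \subseteq \bar{C}$, so the entire content is the inclusion $\bar{C} v_0 \subseteq \bar{C}_1 v_0$. The plan rests on two observations. First, the elements $c(k,l)$ all commute with one another in $U(\tor^+)$, so $\bar{C}$ is a commutative subalgebra and every $c(k,-l)$ commutes with all of $\bar{C}_1$; this centrality is what will let me reduce a product acting on $v_0$ to a single generator acting on $v_0$. Second, by the remark following the definition of $\tor$, the element $s^k t^{-l} ds$ is a nonzero multiple of $c(k+1,-l)$ and $t^{-m} ds$ is a nonzero multiple of $c(1,-m)$. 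Feeding this into Lemma~\ref{lem:key}(ii) converts that identity into a reduction formula: writing $K = k+1 \geq 2$, it expresses $c(K,-l) v_0$ as $0$ when $l < K$ and as a linear combination of the vectors $c(K-1,-l+m)\,c(1,-m) v_0$ with $1 \leq m \leq l-K+1$ when $l \geq K$. The crucial feature is that the first index has dropped from $K$ to $K-1$, while the only extra factor produced is one of the generators $c(1,-m)$ of $\bar{C}_1$.

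The first main step is to prove, by induction on $K \geq 1$, that $c(K,-l) v_0 \in \bar{C}_1 v_0$ for every $l \geq 1$. The base case $K=1$ is trivial, as is the subcase $l < K$, where $c(K,-l) v_0 = 0$. For the inductive step with $l \geq K \geq 2$ I would apply the reduction formula above. For each $m$ in the relevant range one has $l - m \geq K - 1 \geq 1$, so the inductive hypothesis gives $c(K-1,-l+m) v_0 = Q_m v_0$ with $Q_m \in \bar{C}_1$. Using commutativity to move $c(K-1,-l+m)$ past $c(1,-m)$ and onto $v_0$, each summand becomes $c(1,-m) Q_m v_0 \in \bar{C}_1 v_0$, whence $c(K,-l) v_0 \in \bar{C}_1 v_0$.

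The second main step upgrades this from a single generator acting on $v_0$ to an arbitrary product. Since $\bar{C} v_0$ is spanned by the vectors $c(k_1,-l_1)\cdots c(k_r,-l_r) v_0$, I would induct on $r$. Writing $w = c(k_2,-l_2)\cdots c(k_r,-l_r) v_0 = P v_0$ with $P \in \bar{C}_1$ by the inductive hypothesis, commutativity gives $c(k_1,-l_1) w = P\, c(k_1,-l_1) v_0$; the first step rewrites $c(k_1,-l_1) v_0 = Q v_0$ with $Q \in \bar{C}_1$, so that $c(k_1,-l_1) w = P Q v_0 \in \bar{C}_1 v_0$. This yields $\bar{C} v_0 \subseteq \bar{C}_1 v_0$ and completes the argument.

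I do not expect a genuine obstacle here: Lemma~\ref{lem:key}(ii) already performs all of the real computation, and what remains is an organizational double induction. The one point that must be used carefully, and on which the whole scheme hinges, is the commutativity of the central elements $c(k,l)$: without it one could not slide $c(K-1,\cdot)$ onto $v_0$ in order to invoke the inductive hypothesis, and the reduction would not close up.
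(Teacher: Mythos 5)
Your proof is correct and follows essentially the same route as the paper: the paper likewise rewrites Lemma~\ref{lem:key}~(ii) as the statement that $(s^{k}t^{-l}ds)v_0$ is a combination of $(s^{k-1}t^{-l+m}ds)(t^{-m}ds)v_0$, i.e.\ that $c(k+1,-l)v_0$ is a polynomial in the $c(1,-m)$ applied to $v_0$, leaving the iteration implicit. You have merely made explicit the double induction (on the first index, then on the length of a monomial) and the use of commutativity of the $c(k,l)$, both of which are exactly the points the paper's terse argument relies on.
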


\begin{proof}
Suppose $k \geq 1$ and $l \geq 1$.
We rewrite Lemma~\ref{lem:key} (ii) as
\[
	(s^{k} t^{-l} ds) v_0 = \begin{cases}
		0 & \text{if } l \leq k,\\
		\displaystyle\sum_{m=1}^{l-k} \dfrac{k}{l-m} (s^{k-1} t^{-l+m} ds) (t^{-m}ds) v_0 & \text{if } l > k.
	\end{cases}
\]
This implies that the action of $c(k+1,-l) = ((k+1)/l) s^{k}t^{-l} ds$ on $v_0$ is written in terms of a polynomial in $c(1,-m) = (1/m)t^{-m} ds$ with $m \geq 1$.
\end{proof}

\begin{lem}\label{lem:key2}
We have
\[
	\left(\affnbar^{(t)} \otimes s\bbC[s]\right) v_0 \subset \bar{C}_1 U(\affnbar^{(t)}) v_0.
\]
\end{lem}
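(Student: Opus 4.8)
The plan is to work with the subspace $M = \bar{C}_1 U(\affnbar^{(t)}) v_0$ and to show that $(X \otimes s^k) v_0 \in M$ for every $X \in \affnbar^{(t)}$ and every $k \geq 1$; since $\affnbar^{(t)} \otimes s\bbC[s] = \bigoplus_{k \geq 1} \affnbar^{(t)} \otimes s^k$, this is exactly the assertion. The first point to record is that $M$ is a $U(\affnbar^{(t)})$-submodule of $W(\Lambda_0)$ containing $v_0$. Indeed, each generator $c(1,-l)$ of $\bar{C}_1$ commutes with $\frg \otimes \bbC[s,t^{\pm 1}]$ and with $c_t$, hence with all of $\affnbar^{(t)}$; therefore $x \bar{C}_1 U(\affnbar^{(t)}) v_0 = \bar{C}_1 x U(\affnbar^{(t)}) v_0 \subset M$ for $x \in \affnbar^{(t)}$. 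Recall also that $\affnbar^{(t)}$ is spanned by the finite negative root vectors $x_{-\alpha}$ ($\alpha \in \Delta^+$), forming $\mathfrak{n}^- = \bigoplus_{\alpha \in \Delta^+} \frg_{-\alpha}$, together with $\frg \otimes t^{-l}$ for $l \geq 1$. Since $(X \otimes s^k) v_0$ is linear in $X$ and $M$ is a subspace, it suffices to treat $X$ of these two types.

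For $X \in \frg \otimes t^{-l}$ ($l \geq 1$) I would first settle the highest root vector. By Lemma~\ref{lem:key}(i) we have, for $l > k$,
\[
	(e_\theta \otimes s^k t^{-l}) v_0 = \sum_{m=1}^{l-k} c(k,-l+m) (e_\theta \otimes t^{-m}) v_0,
\]
and the left-hand side is $0$ when $l \leq k$. Each $c(k,-l+m)$ lies in $\bar{C}$ and is central, so it commutes past $e_\theta \otimes t^{-m}$; using Lemma~\ref{lem:degree_one} to rewrite $c(k,-l+m) v_0 \in \bar{C} v_0 = \bar{C}_1 v_0$, we obtain $(e_\theta \otimes s^k t^{-l}) v_0 \in \bar{C}_1 U(\affnbar^{(t)}) v_0 = M$. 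To pass from $e_\theta$ to an arbitrary $Z \in \frg$, I would apply the lowering operators $f_i = f_i \otimes 1 \in \affnbar^{(t)}$ ($i \in I$). Because $f_i$ has $t$-degree $0$, the bracket $[f_i \otimes 1, Z \otimes s^k t^{-l}] = [f_i, Z] \otimes s^k t^{-l}$ carries no central correction, and $f_i v_0 = 0$, so
\[
	(f_i \otimes 1)(Z \otimes s^k t^{-l}) v_0 = ([f_i, Z] \otimes s^k t^{-l}) v_0.
\]
Iterating and using that $U(\mathfrak{n}^-) e_\theta = \frg$ (the adjoint representation is irreducible with highest weight vector $e_\theta$), together with the $U(\affnbar^{(t)})$-stability of $M$, gives $(Z \otimes s^k t^{-l}) v_0 \in M$ for all $Z \in \frg$ and all $l \geq 1$, $k \geq 1$.

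It remains to handle $X \in \frg_{-\alpha}$ with $\alpha \in \Delta^+$ (the $t$-degree $0$ part), for which I would induct on the height of $\alpha$. When $\alpha = \alpha_i$ is simple, $x_{-\alpha_i} \otimes s^k = f_{i,k}$ kills $v_0$ by Lemma~\ref{lem:f}, so $(x_{-\alpha_i} \otimes s^k) v_0 = 0 \in M$. For $\alpha$ of larger height, choose a simple $\alpha_i$ with $\beta := \alpha - \alpha_i \in \Delta^+$ and $[x_{-\alpha_i}, x_{-\beta}] = c\, x_{-\alpha}$ for some $c \neq 0$. Both factors have $t$-degree $0$ and the $s$-degrees add to $k \neq 0$, so again no central term arises in $[x_{-\alpha_i} \otimes 1, x_{-\beta} \otimes s^k]$; since $x_{-\alpha_i} v_0 = 0$ we get that $(x_{-\alpha} \otimes s^k) v_0$ is a scalar multiple of $(x_{-\alpha_i} \otimes 1)(x_{-\beta} \otimes s^k) v_0$, which lies in $M$ by the inductive hypothesis $(x_{-\beta} \otimes s^k) v_0 \in M$ and the stability of $M$. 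This exhausts both types of $X$ and completes the argument.

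The main obstacle is the bookkeeping of central terms: the reductions work precisely because every step is carried out with an operator of $t$-degree $0$ (namely $f_i \otimes 1$ or $x_{-\alpha_i} \otimes 1$), which kills $v_0$ and produces no $c(k,l)$ contribution in the relevant bracket, so that the only central elements ever created are those coming from Lemma~\ref{lem:key}, all of which are absorbed into $\bar{C}_1 v_0$ via Lemma~\ref{lem:degree_one}. The one genuinely separate input is that the $t$-degree $0$ part $\mathfrak{n}^-$ is not reached from Lemma~\ref{lem:key} and must instead be treated by the height induction based on Lemma~\ref{lem:f}.
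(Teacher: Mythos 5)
Your proof is correct and takes essentially the same route as the paper: the decisive inputs are identical (Lemma~\ref{lem:key}(i) together with Lemma~\ref{lem:degree_one} for $e_{\theta} \otimes s^k t^{-l}$, Lemma~\ref{lem:f} for the $t$-degree zero part, and propagation by operators in $\affnbar^{(t)}$ that annihilate $v_0$ and create no central terms). The only organizational difference is that you reach the remaining root spaces $\frg_{\alpha} \otimes s^k t^{-l}$ ($l \geq 1$) in one stroke by iterating $\ad f_i$ on $e_{\theta}$ and invoking $U(\mathfrak{n}^-)e_{\theta} = \frg$, whereas the paper treats $\alpha \in \Delta^+$, $\alpha = 0$, and $\alpha \in \Delta^-$ separately, handling the last case via $(x \otimes s^k t^{-l})v_0 = -\tfrac{1}{2}x(h_{\alpha} \otimes s^k t^{-l})v_0$; both variants are sound.
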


\begin{proof}
Note that we have
\begin{equation*}
	\affnbar^{(t)} \otimes s^k = \bigoplus_{\substack{\alpha \in \Delta^+ \cup \{0\}\\ l \geq 1}} \frg_{\alpha} \otimes s^k t^{-l} \oplus  \bigoplus_{\substack{\alpha \in \Delta^- \\ l \geq 0}} \frg_{\alpha} \otimes s^k t^{-l}.
\end{equation*}
Suppose $k \geq 1$.
We show
\begin{equation}
	(x \otimes s^k t^{-l}) v_0 \in \bar{C}_1 U(\affnbar^{(t)}) v_0 \label{eq:contain}
\end{equation}
for
\begin{itemize}
\item
$x \in \frg_{\alpha}$ ($\alpha \in \Delta^+ \cup \{0\}$) and $l \geq 1$;

\item
$x \in \frg_{\alpha}$ ($\alpha \in \Delta^-$) and $l \geq 0$.
\end{itemize}
Lemma~\ref{lem:key} (i) and \ref{lem:degree_one} imply (\ref{eq:contain}) for $x=e_{\theta}$ and $l \geq 1$.
Then we obtain (\ref{eq:contain}) for $x \in \frg_{\alpha}$ ($\alpha \in \Delta^+$) and $l \geq 1$ by successively applying $f_i$'s ($i \in I$) to $(e_{\theta} \otimes s^k t^{-l}) v_0$.
We obtain (\ref{eq:contain}) for $x = h_i$ ($i \in I$) and $l \geq 1$ by applying $f_i$ to $(e_{i} \otimes s^k t^{-l}) v_0$.
We show (\ref{eq:contain}) for $x \in \frg_{\alpha}$ ($\alpha \in \Delta^-$) and $l \geq 0$.
The case $l=0$ is immediate from Lemma~\ref{lem:f}.
Assume $l \geq 1$.
We use $[h_{\alpha} \otimes s^k t^{-l}, x] = 2 x \otimes s^k t^{-l}$ and $x v_0 = 0$ to deduce
\[
	(x \otimes s^k t^{-l}) v_0 = -\dfrac{1}{2} x(h_{\alpha} \otimes s^k t^{-l}) v_0 \in x \bar{C}_1 U(\affnbar^{(t)}) v_0 \subset \bar{C}_1 U(\affnbar^{(t)}) v_0.
\]
\end{proof}

\begin{prop}\label{prop:upper_bound}
We have
\[
	W(\Lambda_0) = \bar{C}_1 U(\affnbar^{(t)}) v_0.
\]
In particular, we have an inequality
\[
	\ch_{p,q} W(\Lambda_0) \leq \ch_p L(\Lambda_0) \displaystyle\prod_{n > 0} \dfrac{1}{1-p^n q}.
\]
\end{prop}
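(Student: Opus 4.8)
The plan is to prove the generation statement $W(\Lambda_0) = \bar{C}_1 U(\affnbar^{(t)}) v_0$, from which the character inequality follows by a dimension-counting argument. The key structural fact is the triangular decomposition
\[
	\tor^+ = \left( \tornbar \cap \tor^+ \right) \oplus \left( \torh \cap \tor^+ \right) \oplus \left( \torn\cap \tor^+ \right),
\]
together with the explicit description
\[
	\tornbar \cap \tor^+ = \affnbar^{(t)} \otimes \bbC[s] \oplus \bigoplus_{\substack{k \geq 1\\ l \leq -1}} \bbC c(k,-l).
\]
Since $W(\Lambda_0)$ is a highest weight module generated by $v_0$ with $(\torn \cap \tor^+) v_0 = 0$ and the Cartan-type part acting by scalars (indeed $h_{i,k} v_0 = 0$ for $i \in I$ and $h_{0,k} v_0 = 0$ for $k \geq 1$ by Lemma~\ref{lem:h_{i,k}} and the specialization at $\mathbf{a}=0$), the PBW theorem gives $W(\Lambda_0) = U(\tornbar \cap \tor^+) v_0$. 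So the whole task reduces to showing that monomials in $U(\tornbar \cap \tor^+)$ applied to $v_0$ can be rewritten to lie in $\bar{C}_1 U(\affnbar^{(t)}) v_0$.

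\medskip

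First I would exploit Lemma~\ref{lem:key2}, which already handles the positive-$s$-degree part of the loop directions: it shows $\left(\affnbar^{(t)} \otimes s\bbC[s]\right) v_0 \subset \bar{C}_1 U(\affnbar^{(t)}) v_0$. Combined with Lemma~\ref{lem:degree_one}, which shows $\bar{C} v_0 = \bar{C}_1 v_0$ (all central generators $c(k,-l)$ with $k \geq 1$ reduce to the $k=1$ family when acting on $v_0$), this means every generator of $\tornbar \cap \tor^+$ either already lies in $\affnbar^{(t)} \otimes \bbC[s]$ with $s$-degree zero (i.e., in $\affnbar^{(t)}$ itself), or when applied to $v_0$ lands in $\bar{C}_1 U(\affnbar^{(t)}) v_0$. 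The remaining work is to promote these single-generator statements to statements about arbitrary products: given a PBW monomial $X_1 \cdots X_r v_0$, I would move the central (and positive $s$-degree) factors leftward past the $\affnbar^{(t)}$-factors, using that the $c(1,-l)$ are central and that the commutators among the $X_j$ stay inside $\tornbar \cap \tor^+$, with $s$-degree non-increasing. A downward induction on the total $s$-degree of the monomial, with Lemma~\ref{lem:key2} supplying the base reduction each time an $s^k$ ($k \geq 1$) factor is encountered, should close the argument and yield $W(\Lambda_0) = \bar{C}_1 U(\affnbar^{(t)}) v_0$.

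\medskip

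For the character bound, I would observe that $\bar{C}_1$ is a polynomial algebra in the commuting generators $c(1,-l)$ ($l \geq 1$), where $c(1,-l)$ carries weight $-l\delta$ and $s$-degree $1$; hence $\bar{C}_1$ contributes the factor $\prod_{n>0} (1-p^n q)^{-1}$ to the $(p,q)$-character, the variable $p$ tracking $-\delta$ and $q$ tracking the $s$-grading. The factor $U(\affnbar^{(t)}) v_0$ surjects onto the $s$-degree-zero, level-one data, which is bounded above by $L(\Lambda_0)$ since $v_0$ satisfies all the defining relations of the irreducible highest weight module $L(\Lambda_0)$ as an $\aff^{(t)}$-module (the relations $e_i v_0 = 0$, $f_i^{\langle h_i,\Lambda_0\rangle+1} v_0 = 0$ hold). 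Because the two families of generators act independently up to the reordering just described, the spanning statement gives
\[
	\ch_{p,q} W(\Lambda_0) \leq \ch_p L(\Lambda_0) \prod_{n > 0} \dfrac{1}{1-p^n q},
\]
which is exactly the claimed upper bound.

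\medskip

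\textbf{Main obstacle.} The delicate point is the inductive reordering in the generation step: commuting a positive-$s$-degree element of $\affnbar^{(t)} \otimes s\bbC[s]$ past the $\affnbar^{(t)}$-part and the central factors produces, via the bracket~(\ref{eq:bracket}), both lower-$s$-degree loop terms and new central terms $c(k+m, \cdot)$, and one must verify that every such term is again absorbed by Lemma~\ref{lem:key2} and Lemma~\ref{lem:degree_one} rather than escaping the claimed span. Keeping careful track that the induction parameter (total $s$-degree, or the pair $(a+b)$ as in Proposition~\ref{prop:span}) strictly decreases under these rewrites, and that the $\bar{C}_1$-factors can always be collected on the left because they are central, is where the real care is needed; the character estimate is then a formal consequence once the spanning set is in hand.
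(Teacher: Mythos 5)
Your proposal is correct and follows essentially the same route as the paper: PBW to get $W(\Lambda_0) = U(\tornbar \cap \tor^+)v_0$, then Lemma~\ref{lem:degree_one} to reduce $\bar{C}$ to $\bar{C}_1$ and Lemma~\ref{lem:key2} together with the fact that $\affnbar^{(t)} \otimes s\bbC[s]$ is $\ad\affnbar^{(t)}$-invariant modulo central terms to absorb the positive-$s$-degree factors by induction, followed by the same counting of $\bar{C}_1$ against $\prod_{n>0}(1-p^nq)^{-1}$ and of $U(\affnbar^{(t)})v_0$ against $L(\Lambda_0)$. The ``main obstacle'' you flag is exactly the point the paper disposes of with the $\ad$-invariance remark, so your inductive reordering is the intended argument.
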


\begin{proof}
Let $N$ be the $\bbC$-span of monomials in $\affnbar^{(t)} \otimes s\bbC[s]$.
Then the PBW theorem and Lemma~\ref{lem:degree_one} imply
\[
	W(\Lambda_0) = U(\tornbar \cap \tor^+)v_0 = \bar{C}_1 U(\affnbar^{(t)}) N v_0.
\]
Since $\affnbar^{(t)} \otimes s\bbC[s]$ is $\ad \affnbar^{(t)}$-invariant modulo central elements, we prove the assertion by Lemma~\ref{lem:key2} and \ref{lem:degree_one}. 
\end{proof}

\begin{rem}
We will show in Corollary~\ref{cor:character} that the equality
\[
	\ch_{p,q} W(\Lambda_0) = \ch_p L(\Lambda_0) \displaystyle\prod_{n > 0} \dfrac{1}{1-p^n q}
\]
holds.
\end{rem}

\begin{rem}
By Proposition~\ref{prop:character}, \ref{prop:independent} and \ref{prop:upper_bound}, we have an inequality
\[
	\ch_{p} \loc(\Lambda_0,a) \leq \ch_p L(\Lambda_0) \displaystyle\prod_{n > 0} \dfrac{1}{1-p^n}.
\]
We will show in Corollary~\ref{cor:character} that the equality holds.
In fact, we can directly prove this inequality for $\ch_{p} \loc(\Lambda_0,a)$ by a similar calculation for $\loc(\Lambda_0,a)$ instead of $W(\Lambda_0)$.
More precisely, we can show $\loc(\Lambda_0,a) = \bar{C}_1 U(\affnbar^{(t)}) v_{\Lambda_0,a}$.
Moreover, we can show that
\[
	\loc(\Lambda_0,a) = \bar{C}_0 U(\affnbar^{(t)}) v_{\Lambda_0,a}
\]
also holds, where $\bar{C}_0$ is the subalgebra of $U(\tor')$ generated by $c(0,-l)$ ($l \geq 1$).

Here we gave the calculation for $W(\Lambda_0)$ by two reasons:
\begin{enumerate}
\item
we are interested in the $(p,q)$-characters of the graded local Weyl modules for $\tor^+$;

\item
the calculation for $W(\Lambda_0)$ is easier than that for $\loc(\Lambda_0,a)$.
\end{enumerate}
\end{rem}

\section{Vertex operator construction and Weyl modules}\label{section:Vertex operator construction}

\subsection{Heisenberg Lie algebras}\label{subsection:Heisenberg}

We assume that $\frg$ is of type ADE in Section~\ref{subsection:Heisenberg} and \ref{subsection:vertex}.
Recall that $\affQ = \bigoplus_{i \in \affI} \bbZ \alpha_i$ is the root lattice of $\aff^{(t)}$.
We fix a bimultiplicative 2-cocycle $\ve \colon \affQ \times \affQ \to \{\pm 1\}$ satisfying
\[
	\ve(\alpha,\alpha) = (-1)^{(\alpha,\alpha)/2}, \quad \ve(\alpha,\beta)\ve(\beta,\alpha) = (-1)^{(\alpha,\beta)}, \quad \ve(\alpha,\delta)=1
\]
as in \cite[Section~4]{MR1066569}.
Let $\bbC[\affQ]$ be the group algebra of $\affQ$ with a $\bbC$-basis denoted by $e^{\alpha}$ ($\alpha \in \affQ$).
We make $\bbC[\affQ]$ into a $\bbC[\affQ]$-module via $\ve$, that is, we define $e^{\alpha} \cdot e^{\beta} = \ve(\alpha,\beta)e^{\alpha+\beta}$. 
We denote by $\bbC_{\ve}[\affQ]$ this module.
We define an action of $h \in \affh^{(t)}$ on $\bbC_{\ve}[\affQ]$ by $h \cdot e^{\alpha} = \langle h, \alpha \rangle e^{\alpha}$.

The toroidal Lie algebra $\tor$ contains a Heisenberg Lie algebra 
\[
	\calH = \displaystyle\bigoplus_{\substack{i \in \affI\\k \neq 0}} \bbC h_{i,k} \oplus \bbC c_s.
\]
Define the Fock representation $\affF$ of $\calH$ by
\[
	\affF = U(\calH) / \sum_{\substack{i \in \affI\\ k >0}}U(\calH) h_{i,k} + U(\calH)(c_s-1).
\]
We set
\[
	\bbV(0) = \affF \otimes \bbC_{\ve}[\affQ].
\]
Define the degree on $\bbV(0)$ by $\deg h_{i,k}=k$ and $\deg e^{\alpha}= (\alpha,\alpha)/2$.
Then we regard $\bbV(0)$ as a module of $\torh = \calH \oplus \affh^{(t)} \oplus \bbC d_s$ via the actions of $\calH$ and $\affh^{(t)}$ on $\affF$ and $\bbC_{\ve}[\affQ]$ respectively, and so that $d_s$ counts the degree.

Similarly we define $\mathcal{F}$ to be the Fock representation for a Heisenberg Lie subalgebra
\[
	\displaystyle\bigoplus_{\substack{i \in I\\k \neq 0}} \bbC h_{i,k} \oplus \bbC c_s
\]
of $\aff^{(s)}$.

\subsection{Vertex representations}\label{subsection:vertex}

For each $\alpha \in \affDelta$, we set
\[
	X(\alpha,u) = u^{(\alpha,\alpha)/2} \left( e^{\alpha} u^{h_{\alpha}} \right) \exp\left( \sum_{k>0} \dfrac{h_{\alpha} \otimes s^{-k}}{k} u^{k} \right) \exp\left( -\sum_{k>0} \dfrac{h_{\alpha} \otimes s^{k}}{k} u^{-k} \right)
\]
as an element of $( \End_{\bbC} \bbV(0) )[[u^{\pm1}]]$.
Here $u^{h_{\alpha}}$ acts by
\[
	u^{h_{\alpha}} \cdot e^{\beta} = u^{(\alpha,\beta)} e^{\beta}.
\]
Define $X_{k}(\alpha)$ by the expansion
\[
	X(\alpha,u) = \sum_{k \in \bbZ} X_k(\alpha) u^{-k}.
\]

\begin{thm}[\cite{MR1066569} Proposition~4.3]\label{thm:MEY}
We can extend the action of $\torh = \calH \oplus \affh^{(t)} \oplus \bbC d_s$ to $\tor$ on $\bbV(0)$ by
\[
	e_{i,k} \mapsto X_{k}(\alpha_i), \quad f_{i,k} \mapsto X_{k}(-\alpha_i).
\]
\end{thm}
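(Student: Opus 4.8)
The plan is to verify that the operators $X_{k}(\alpha_i)$ and $X_{k}(-\alpha_i)$, together with the given action of $\torh = \calH \oplus \affh^{(t)} \oplus \bbC d_s$, satisfy every defining relation of the presentation $\frt$ of $\tor$; by that presentation it then suffices to check the relations among the Chevalley-type generators, and the central elements $c(k,l)$ with $l \neq 0$ need no separate treatment since they arise as iterated brackets of these generators. As $\frg$ is of type ADE we have $d_i = 1$ throughout, which simplifies the normalizations. The single computational tool is the operator product expansion of the fields $X(\alpha,u)$.

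First I would establish the contraction formula. Writing $X(\alpha,u)$ as the product of its creation exponential, the lattice operator $e^{\alpha} u^{h_{\alpha}}$, and its annihilation exponential, a Baker--Campbell--Hausdorff computation in $\calH$ combined with the cocycle action on $\bbC_{\ve}[\affQ]$ gives, for $|u| > |v|$,
\[
	X(\alpha,u) X(\beta,v) = \ve(\alpha,\beta)\, (u-v)^{(\alpha,\beta)}\, {:}X(\alpha,u) X(\beta,v){:},
\]
where $(u-v)^{(\alpha,\beta)}$ is expanded in the region $|u| > |v|$ and ${:}\,{:}$ denotes normal ordering. This identity is the heart of the argument.

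From it I would extract the mode relations. The Cartan relations follow at once from $[h_{i,k}, h_{\alpha_j} \otimes s^{-m}]$ inside $\calH$ and the eigenvalue of $h_i$ on $e^{\alpha_j}$, giving the field identity $[h_{i,k}, X(\alpha_j,u)] = a_{ij} u^{k} X(\alpha_j,u)$ and hence $[h_{i,k}, e_{j,l}] = a_{ij} e_{j,k+l}$, $[h_{i,k}, f_{j,l}] = -a_{ij} f_{j,k+l}$. For the remaining relations I would form $[X(\alpha,u), X(\beta,v)]$ as the difference of the two region expansions of the contraction formula; the cocycle identities $\ve(\alpha,\beta)\ve(\beta,\alpha) = (-1)^{(\alpha,\beta)}$ and $\ve(\alpha,\alpha) = (-1)^{(\alpha,\alpha)/2}$ make the outcome depend only on the sign of $(\alpha,\beta)$. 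When $(\alpha,\beta) \geq 0$ the prefactor is a Laurent polynomial, the two expansions agree, and the fields commute; this yields $[e_{i,k}, e_{i,l}] = 0$ and the vanishing of $[e_{i,k}, e_{j,l}]$ whenever $(\alpha_i,\alpha_j) = 0$. When $(\alpha,\beta) = -1$ a simple pole produces the formal delta series, whose residue is a single vertex operator attached to $\alpha + \beta$. When $\beta = -\alpha$ a double pole produces the Heisenberg current together with a derivative-of-delta term; isolating its residue reproduces exactly $[e_{i,k}, f_{i,l}] = h_{i,k+l} + k\delta_{k+l,0} c_s$. The grading relations $[d_s, e_{i,k}] = k e_{i,k}$ and the $d_t$-relations are then immediate from the homogeneity of $X_{k}(\alpha)$ under $\deg$ and the $\delta$-weight of $e^{\alpha}$ on $\bbC_{\ve}[\affQ]$.

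The Serre relations for $i \neq j$ reduce, in type ADE, to the commuting case already handled: if $a_{ij} = 0$ then $(\ad e_{i,0}) e_{j,k} = [e_{i,0}, e_{j,k}] = 0$ since $(\alpha_i,\alpha_j) = 0$; if $a_{ij} = -1$ then one bracket produces a vertex operator attached to $\alpha_i + \alpha_j$, and since $(\alpha_i, \alpha_i + \alpha_j) = 1 \geq 0$ the second bracket $(\ad e_{i,0})^{2} e_{j,k}$ vanishes, and likewise for the $f$'s. I expect the double-pole computation to be the main obstacle: extracting the central $c_s$-term with the precise coefficient $k\delta_{k+l,0}$ demands careful bookkeeping of the cocycle sign at $\beta = -\alpha$ and of the regularized value of the contraction at $u = v$, and it is exactly here that the normalization $\ve(\alpha,\alpha) = (-1)^{(\alpha,\alpha)/2}$ is indispensable.
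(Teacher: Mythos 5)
The paper gives no proof of this statement: it is quoted directly from Moody--Eswara Rao--Yokonuma \cite[Proposition~4.3]{MR1066569}, so there is no internal argument to compare against. Your sketch is the standard Frenkel--Kac-type verification that the cited source itself carries out --- reduce to the presentation $\frt$ of $\tor$ so that only the relations among $e_{i,k}$, $f_{i,k}$, $h_{i,k}$, $c_s$, $d_s$, $d_t$ need checking, establish the contraction formula $X(\alpha,u)X(\beta,v)=\ve(\alpha,\beta)(u-v)^{(\alpha,\beta)}{:}X(\alpha,u)X(\beta,v){:}$, and read off the mode relations from the singularity structure of $(u-v)^{(\alpha,\beta)}$ according to the sign of $(\alpha,\beta)$ --- and it is correct in outline, with the double-pole case $\beta=-\alpha$ (producing $h_{i,k+l}+k\delta_{k+l,0}c_s$) being, as you yourself flag, the only genuinely delicate computation.
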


We denote by $\tau$ the action of $c(0,1)$ on $\bbV(0)$.
Then by \cite[(4.1) and Proposition~5.3 (ii)]{MR1066569}, the action of $c(0,k)$ for $k \neq 0$ is given by $\tau^k$.
The subalgebra of $\End_{\bbC} \bbV(0)$ generated by $\tau^k$ ($k \in \bbZ$) is isomorphic to the Laurent polynomial algebra $\bbC[\tau^{\pm 1}]$. 

We denote by $\delta(k)$ the action of $c(k,0)$ on $\bbV(0)$ for $k<0$.
They freely generate a polynomial subalgebra of $\End_{\bbC} \bbV(0)$ and we denote it by $D$. 
We have an isomorphism of $\bbC$-vector spaces 
\[
	\affF \cong \mathcal{F} \otimes D.
\]

\begin{prop}[\cite{MR1066569} Lemma~5.6]\label{prop:freeness_vertex_rep}
The multiplication map gives an isomorphism
\[
	\bbV(0) \cong \mathcal{F} \otimes \bbC_{\ve}[Q] \otimes D \otimes \bbC[\tau^{\pm 1}]
\]
of $\bbC$-vector spaces.
In particular, $\bbV(0)$ is free over $\bbC[\tau^{\pm 1}]$.
\end{prop}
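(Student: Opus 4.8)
The plan is to read off the asserted isomorphism from two essentially independent decompositions---one of the Fock factor $\affF$ and one of the group algebra $\bbC_{\ve}[\affQ]$---so that, once the action of $\tau$ is pinned down, the statement reduces to bookkeeping with the PBW theorem and the structure of twisted group algebras. The decomposition $\affF \cong \mathcal{F} \otimes D$ of the Fock factor is already recorded in the excerpt; concretely, since $\sum_{i \in \affI} h_{i,k} = c(k,0)$, the relation $h_{0,k} = c(k,0) - \sum_{i \in I} h_{i,k}$ shows that the free polynomial generators $\{h_{i,k} : i \in \affI,\ k < 0\}$ of $\affF$ may be replaced by $\{h_{i,k} : i \in I,\ k<0\} \cup \{c(k,0) : k<0\}$, the first set generating $\mathcal{F}$ and the second generating $D$. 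Hence I only need to analyze the lattice factor together with the operator $\tau$.

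First I would decompose the twisted group algebra. Because $\alpha_0 = -\theta + \delta$ with $\theta \in Q$, we have $\affQ = Q \oplus \bbZ\delta$ as abelian groups. The normalization $\ve(\alpha,\delta)=1$ combined with the cocycle relation $\ve(\alpha,\delta)\ve(\delta,\alpha) = (-1)^{(\alpha,\delta)} = 1$ forces the cocycle to be trivial in the $\delta$-direction, so left multiplication by $e^{\delta}$ on $\bbC_{\ve}[\affQ]$ is the untwisted translation $e^{\beta} \mapsto e^{\beta + \delta}$. Thus $\{e^{n\delta}\}_{n \in \bbZ}$ acts freely and
\[
	\bbC_{\ve}[\affQ] = \bigoplus_{\alpha \in Q} \bbC[e^{\pm\delta}]\, e^{\alpha} \cong \bbC_{\ve}[Q] \otimes \bbC[e^{\pm\delta}]
\]
as vector spaces equipped with an $e^{\delta}$-translation.

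Next I would identify this translation with the action of $\tau$. From the explicit vertex-operator formulas of \cite{MR1066569} (namely (4.1) and Proposition~5.3(ii)) the central element $c(0,1)$ acts on $\bbV(0)$ purely as the lattice translation, that is, as $\mathrm{id}_{\affF} \otimes (e^{\delta}\cdot)$; note this operator is degree-preserving, since $(\delta,\delta)=(\alpha,\delta)=0$ gives $\deg e^{\alpha+\delta} = \deg e^{\alpha}$, and it is invertible, so the powers $\tau^k$ span $\bbC[\tau^{\pm1}]$ and the previous display becomes $\bbC_{\ve}[\affQ] \cong \bbC_{\ve}[Q] \otimes \bbC[\tau^{\pm1}]$. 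Tensoring with $\affF \cong \mathcal{F}\otimes D$ and reordering the factors yields the claimed
\[
	\bbV(0) \cong \mathcal{F} \otimes \bbC_{\ve}[Q] \otimes D \otimes \bbC[\tau^{\pm1}],
\]
and freeness over $\bbC[\tau^{\pm1}]$ is immediate because $\tau$ acts only through the last factor.

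The hard part is precisely this last identification: confirming, from the construction in \cite{MR1066569}, that $c(0,1)$ acts as the pure $\delta$-translation on the lattice and trivially on the Heisenberg modes of $\affF$. This must be disentangled from the action of the other central elements $c(k,0)$, which by contrast act on $\affF$ itself---they are exactly the generators of $D$---so the two families of central elements play genuinely different roles and must be separated cleanly. Everything after that is routine.
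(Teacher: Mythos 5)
Your argument is correct, but note that the paper itself offers no proof of this proposition --- it is quoted verbatim from \cite{MR1066569}, Lemma~5.6 --- so what you have written is essentially a reconstruction of the argument in that reference rather than an alternative to anything in this article. The three ingredients you isolate are exactly the right ones: the PBW/change-of-generators argument for $\affF \cong \mathcal{F} \otimes D$, the splitting $\affQ = Q \oplus \bbZ\delta$ together with the triviality of $\ve$ in the $\delta$-direction (your derivation of $\ve(\delta,\alpha)=1$ from bimultiplicativity, $\ve(\alpha,\delta)=1$ and $(\alpha,\delta)=0$ is the right way to see that $e^{\delta}$ acts as the plain translation), and the identification of $\tau$ with $\id_{\affF}\otimes(e^{\delta}\cdot)$, which indeed is the content of (4.1) and Proposition~5.3(ii) of \cite{MR1066569} that the paper also takes as input. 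One small inaccuracy, inherited from the paper: the identity $\sum_{i\in\affI} h_{i,k} = c(k,0)$ holds literally only in type $A$; in general $c(k,0) = \sum_{i\in\affI} a_i^{\vee} h_{i,k}$ with the comarks $a_i^{\vee}$ (and $a_0^{\vee}=1$). This does not affect your change of generators, since all you need is that $c(k,0) \equiv h_{0,k}$ modulo the span of $\{h_{i,k} : i \in I\}$, but you should state it that way.
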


The $\aff^{(s)}$-submodule $\mathcal{F} \otimes \bbC_{\ve}[Q]$ is known to be isomorphic to the level one integrable irreducible $\aff^{(s)}$-module $L(\Lambda_0)^{(s)}$ with highest weight $\Lambda_0$ by Frenkel-Kac \cite{MR595581}. 
Hence it has the following defining relations:
\begin{gather}
	(f_{\theta} \otimes s) (1 \otimes e^0) = 0,\quad e_i (1 \otimes e^0) = 0 \ (i \in I), \label{eq:Frenkel-Kac1}\\
	c_s (1 \otimes e^0) = 1 \otimes e^0,\quad h_i (1 \otimes e^0) = 0 \ (i \in I),\quad d_s (1 \otimes e^0) = 0,\label{eq:Frenkel-Kac2}\\
	(e_{\theta} \otimes s^{-1})^2 (1 \otimes e^0) = 0,\quad f_i (1 \otimes e^0) = 0 \ (i \in I).\label{eq:Frenkel-Kac3}
\end{gather}
We will determine the defining relations of $\bbV(0)$ as a $\tor$-module as a main result of this article.

\subsection{General construction}

We review the construction of $\tor$-modules given by Iohara-Saito-Wakimoto~\cite{MR1688100} and Eswara Rao~\cite{MR3076215}.
Assume that $\frg$ is an arbitrary simple Lie algebra.
Let $D$ be the polynomial algebra generated by the elements $\delta(k)$ ($k < 0$). 
For a given smooth $\aff^{(s)}$-module $M$, we will define a $\tor$-module structure on
\[
	M \otimes D \otimes \bbC[\tau^{\pm 1}]
\]
as follows.
For an element $x$ of $\frg$, we put $x(u) = \sum_{k \in \bbZ} (x \otimes s^k) u^{-k}$.
Define a formal series $\Delta_l(u)$ for each $l \in \bbZ$ by
\[
	\Delta_l(u) = \exp \left( \sum_{k > 0} \dfrac{l \delta(-k)}{k} u^{k} \right).
\]
We make $D$ into a graded algebra by $\deg \delta(k) = k$ and let $d^{(D)}$ be the operator which counts the degree on $D$.
We make $\bbC[\tau^{\pm 1}]$ into a graded algebra by $\deg \tau = 1$ and let $d^{(\tau)}$ be the operator which counts the degree on $\bbC[\tau^{\pm 1}]$.

\begin{thm}[\cite{MR1688100} Lemma~2.1, \cite{MR3076215} Theorem~4.1]\label{thm:ISW-E}
Let $M$ be a smooth $\aff^{(s)}$-module.
The assignment
\[
	\sum_{k \in \bbZ} (x \otimes s^k t^l) u^{-k} \mapsto x(u) \otimes \Delta_l(u) \otimes \tau^l
\]
for $x \in \frg,$
\[
	\sum_{k \in \bbZ} (s^{k-1} t^l ds) u^{-k} \mapsto c_s \otimes \Delta_l(u) \otimes \tau^l, \quad 
	s^{k} t^{-1} dt \mapsto \begin{cases}
		\id \otimes \delta(k) \otimes \id & \text{ if } k < 0,\\
		0 & \text{ if } k \geq 0,
	\end{cases}
\]
\[
	d_s \mapsto d_s \otimes \id \otimes \id + \id \otimes d^{(D)} \otimes \id, \quad d_t \mapsto \id \otimes \id \otimes d^{(\tau)}
\]
gives a $\tor$-module structure on $M \otimes D \otimes \bbC[\tau^{\pm 1}]$.
\end{thm}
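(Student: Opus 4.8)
The plan is to verify that the proposed assignment is a homomorphism of Lie algebras, i.e.\ that it respects every defining bracket of $\tor$. Since $\tor$ is presented by the bracket (\ref{eq:bracket}) together with the relations involving $d_s$ and $d_t$ (there are no Serre-type relations to worry about in this presentation), checking these brackets suffices. First I would record that smoothness of $M$ guarantees that $x(u)$, and hence all the formal series appearing on the right-hand sides, act by well-defined (locally finite) operators on $M \otimes D \otimes \bbC[\tau^{\pm 1}]$. The crucial structural observation is that the three tensor factors are acted on by mutually commuting operators: $\Delta_l(u)$ is a multiplication operator on the commutative algebra $D$ and $\tau^l$ is a multiplication operator on $\bbC[\tau^{\pm 1}]$. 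Writing $\mathcal{X}(u) = x(u) \otimes \Delta_l(u) \otimes \tau^l$ and $\mathcal{Y}(v) = y(v) \otimes \Delta_n(v) \otimes \tau^n$, this yields the clean formula
\[
	[\mathcal{X}(u), \mathcal{Y}(v)] = [x(u), y(v)] \otimes \Delta_l(u)\Delta_n(v) \otimes \tau^{l+n},
\]
reducing the entire computation to the affine commutator in the first factor.

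Next I would insert the $\aff^{(s)}$-commutator, which in generating-function form reads $[x(u), y(v)] = [x,y](v)\bigl(\sum_{j}(v/u)^j\bigr) + (x,y)\,c_s\bigl(\sum_j j (v/u)^j\bigr)$, and process the two summands separately via the formal-delta identities. For the first summand, the identity $f(u)\sum_j (v/u)^j = f(v)\sum_j (v/u)^j$ together with $\Delta_l(v)\Delta_n(v) = \Delta_{l+n}(v)$ collapses $\Delta_l(u)\Delta_n(v)$ to $\Delta_{l+n}(v)$, and extracting the coefficient of $u^{-k}v^{-m}$ reproduces exactly the image of the principal term $[x,y] \otimes s^{k+m}t^{l+n}$.

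The heart of the proof, and the step I expect to be the main obstacle, is matching the central contribution coming from the second summand against the three cases of (\ref{eq:bracket}). Here the relevant computation rests on the logarithmic-derivative identity $u\partial_u \Delta_l(u) = l\bigl(\sum_{a>0}\delta(-a)u^a\bigr)\Delta_l(u)$, in which the factor on the right is independent of $l$. Using this together with the derivative-of-delta identities and $\Delta_l\Delta_n = \Delta_{l+n}$, a careful coefficient extraction should produce precisely the weight $\frac{lm-kn}{k+m}$ on $c(k+m,l+n)$ when $l+n \neq 0$ (realized through the $ds$-type image $c_s \otimes \Delta_{l+n}(u)$), the weight $k$ on $c(0,l+n)$ when $k+m=0$, and — when $l+n = 0$ — the $dt$-type central elements $\delta(k+m)$ with the coefficient dictated by (\ref{eq:bracket}), recovering the $c_s$ and $c_t$ central terms in the doubly degenerate case (where $c_t$ acts as zero). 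Carrying out this case-by-case bookkeeping, and verifying that the $ds$- and $dt$-type generators are assigned consistently with the central values forced by these brackets, is the delicate part.

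Finally I would dispatch the remaining relations. The images of $c(k,l)$, $c_s$ and $c_t$ are central because they act either as the (Heisenberg/scalar) central element on $M$ or as multiplication operators on $D$, all of which commute with the images of $\frg \otimes \bbC[s^{\pm1},t^{\pm1}]$ by the same tensor-factor argument as above. The relations with $d_s$ and $d_t$ are a routine grading check: $d_s \mapsto d_s \otimes \id \otimes \id + \id \otimes d^{(D)} \otimes \id$ measures the total $s$-degree recorded jointly by $M$ and by $\deg \delta(k)=k$ on $D$, while $d_t \mapsto \id \otimes \id \otimes d^{(\tau)}$ measures the $t$-degree via $\deg\tau = 1$; both commute with the central images and act on $x(u)\otimes\Delta_l(u)\otimes\tau^l$ with the expected eigenvalues, so these relations follow without difficulty.
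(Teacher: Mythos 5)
The paper does not actually prove this statement: it is imported wholesale from \cite{MR1688100} (Lemma~2.1) and \cite{MR3076215} (Theorem~4.1), so there is no in-paper proof to compare against. Your outline is the standard direct verification carried out in those references, and its skeleton is sound: the three tensor factors are acted on by mutually commuting operators (multiplication operators on $D$ and $\bbC[\tau^{\pm 1}]$), so the bracket of two images reduces to the $\aff^{(s)}$-commutator $[x(u),y(v)]$ in the first factor; the non-central part is handled by $f(u)\delta(v/u)=f(v)\delta(v/u)$ together with $\Delta_l(v)\Delta_n(v)=\Delta_{l+n}(v)$; and the central part is handled by the derivative-of-delta identity combined with $v\partial_v\Delta_l(v)=l\bigl(\sum_{a>0}\delta(-a)v^a\bigr)\Delta_l(v)$, which, after extracting the coefficient of $u^{-k}v^{-m}$, does reproduce the weights $\tfrac{lm-kn}{k+m}$, $k$, and $kc_s+lc_t$ in the three cases of (\ref{eq:bracket}) (I checked the bookkeeping you deferred; it closes). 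Two caveats. First, your assertion that $d_s$ and $d_t$ ``commute with the central images'' is wrong as stated: they commute only with the images of $c_s$ and $c_t$, while on the image of $c(k,l)$ they must (and do) act with eigenvalues $k$ and $l$, via $[d^{(D)},\Delta_l^{(k)}]=k\Delta_l^{(k)}$, $[d^{(\tau)},\tau^l]=l\tau^l$; this is exactly the relation $[d_s,c(k,l)]=kc(k,l)$, $[d_t,c(k,l)]=lc(k,l)$, so the fix is only a rewording. Second, since the toroidal center is $\Omega_{\bbC[s^{\pm1},t^{\pm1}]}/\Ima d$ and the theorem assigns images to the redundant spanning set $\{s^{k-1}t^l\,ds\}\cup\{s^kt^{-1}dt\}$ rather than to a basis, you should also record the (easy) well-definedness check that elements of $\Ima d$ go to zero --- e.g.\ $s^{k-1}ds$ for $k\neq 0$ maps to the coefficient of $u^{-k}$ in the constant series $c_s\otimes\Delta_0(u)\otimes 1$, which vanishes. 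With those two points repaired, your sketch is a faithful reconstruction of the cited proof.
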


\begin{rem}
Let us give a remark on the results of \cite{MR1688100} and \cite{MR3076215} stated above.
In \cite{MR1688100}, the authors consider a Lie algebra bigger than $\tor$ and the module they construct is bigger than $M \otimes D \otimes \bbC[\tau^{\pm 1}]$.
If one restricts the action to $\tor$, we can take $M \otimes D \otimes \bbC[\tau^{\pm 1}]$ as a $\tor$-submodule.
Moreover, although they assume that $\frg$ is of type ADE in \cite{MR1688100}, the construction does not need the assumption.
Later this construction of $\tor$-modules has been generalized in \cite{MR3076215} to some Lie superalgebras.   
\end{rem}

Take $M$ as the level one integrable irreducible $\aff^{(s)}$-module  $L(\Lambda_0)^{(s)}$ with highest weight $\Lambda_0$ and set
\[
	\bbV(0) = L(\Lambda_0)^{(s)} \otimes D \otimes \bbC[\tau^{\pm 1}].
\]
This definition is compatible with the construction given in Section~\ref{subsection:Heisenberg} and \ref{subsection:vertex} if $\frg$ is of type ADE.
Indeed, the definition of the vertex operator $X(\alpha,u)$ implies that
\[
	X(\beta+l\delta,u) = \begin{cases}
		X(\beta,u) \otimes \Delta_l(u) \otimes \tau^l & \text{if } \beta \in \Delta,\\
		\id \otimes \Delta_l(u) \otimes \tau^l & \text{if } \beta = 0,
	\end{cases}
\]
when we write $\alpha \in \affDelta$ as $\alpha = \beta + l\delta$ with $\beta \in \Delta \cup \{0\}$ and $l \in \bbZ$.

Let $v^{(s)}$ be a highest weight vector of $L(\Lambda_0)^{(s)}$. 
We generalize the relations given in (\ref{eq:Frenkel-Kac1}), (\ref{eq:Frenkel-Kac2}), (\ref{eq:Frenkel-Kac3}).

\begin{lem}\label{lem:highest}
We have
\begin{gather}
	(f_{\theta} \otimes s) (v^{(s)} \otimes 1 \otimes 1) = 0,\quad e_i (v^{(s)} \otimes 1 \otimes 1) = 0 \ (i \in I), \label{eq:Frenkel-Kac1new}\\
	c_s (v^{(s)} \otimes 1 \otimes 1) = v^{(s)} \otimes 1 \otimes 1, \quad h_i (v^{(s)} \otimes 1 \otimes 1) = 0 \ (i \in I),\quad d_s (v^{(s)} \otimes 1 \otimes 1) = 0,\label{eq:Frenkel-Kac2new}\\
	(e_{\theta} \otimes s^{-1})^2 (v^{(s)} \otimes 1 \otimes 1) = 0,\quad f_i (v^{(s)} \otimes 1 \otimes 1) = 0 \ (i \in I).\label{eq:Frenkel-Kac3new}
\end{gather}
\end{lem}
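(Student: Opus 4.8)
The plan is to verify the relations \eqref{eq:Frenkel-Kac1new}, \eqref{eq:Frenkel-Kac2new}, \eqref{eq:Frenkel-Kac3new} one by one by feeding the explicit module structure from Theorem~\ref{thm:ISW-E} into the Frenkel-Kac relations \eqref{eq:Frenkel-Kac1}, \eqref{eq:Frenkel-Kac2}, \eqref{eq:Frenkel-Kac3} for $L(\Lambda_0)^{(s)}$. The vector $v^{(s)} \otimes 1 \otimes 1$ sits at the bottom degree of both tensor factors $D$ and $\bbC[\tau^{\pm 1}]$, so the point is that each generator appearing in the relations acts through Theorem~\ref{thm:ISW-E} in a way that isolates the $\aff^{(s)}$-action on the first factor, where the Frenkel-Kac relations directly apply.

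First I would dispose of the generators with $t$-degree $l=0$, namely those in the subalgebra $\aff^{(s)}$. For $x \in \frg$ the formula reads $\sum_k (x \otimes s^k) u^{-k} \mapsto x(u) \otimes \Delta_0(u) \otimes \tau^0$, and since $\Delta_0(u) = \exp(0) = \id$ and $\tau^0 = \id$, the action on $v^{(s)} \otimes 1 \otimes 1$ is simply $(x \otimes s^k) v^{(s)} \otimes 1 \otimes 1$. This immediately reduces $e_i$, $f_i$, $h_i$ (all $l=0$), $f_\theta \otimes s$, $(e_\theta \otimes s^{-1})^2$, and $c_s$ to the corresponding relations in $L(\Lambda_0)^{(s)}$, giving \eqref{eq:Frenkel-Kac1new}, \eqref{eq:Frenkel-Kac3new}, the $c_s$ and $h_i$ relations in \eqref{eq:Frenkel-Kac2new}. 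For $c_s$ one uses that it acts as $c_s \otimes \Delta_0(u) \otimes \tau^0 = c_s \otimes \id \otimes \id$, picking out the level-one eigenvalue $1$ on $v^{(s)}$. The relation $d_s(v^{(s)} \otimes 1 \otimes 1)=0$ follows because $d_s = d_s \otimes \id \otimes \id + \id \otimes d^{(D)} \otimes \id$: the first summand kills the highest weight vector $v^{(s)}$ (under the convention $\langle d_s, \Lambda_0\rangle = 0$) and the second counts the degree of $1 \in D$, which is zero.

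The only relation that genuinely exercises the twisting is $(e_\theta \otimes s^{-1})^2$, but note this element already has $l=0$, so it too collapses to the $\aff^{(s)}$-relation \eqref{eq:Frenkel-Kac3} via $\Delta_0 = \id$, $\tau^0 = \id$. Thus \emph{every} element occurring in Lemma~\ref{lem:highest} happens to have vanishing $t$-degree, and the higher factors $D$ and $\bbC[\tau^{\pm 1}]$ never get disturbed. I expect no serious obstacle here: the content of the lemma is precisely that the Frenkel-Kac generating relations for the $s$-affine algebra transport verbatim to $v^{(s)}\otimes 1\otimes 1$ once one checks that the formulas of Theorem~\ref{thm:ISW-E} act trivially on the new tensor factors in $t$-degree zero. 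The one mild subtlety worth spelling out is the bookkeeping for $d_s$ and $c_s$, where the tensor-product formula mixes operators across factors; here I would explicitly write out the two summands in $d_s$ and confirm the degree of $1 \otimes 1$ is zero, so that the computation genuinely reduces to the highest-weight condition on $v^{(s)}$.
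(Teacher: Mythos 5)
Your proposal is correct and matches the paper's argument: the paper's proof is the one-line observation that these relations are ``direct consequences of the definition of the action and the relations in $L(\Lambda_0)^{(s)}$,'' and your write-up simply makes explicit why — every generator involved has $t$-degree $l=0$, so $\Delta_0(u)=\id$ and $\tau^0=\id$ leave the factors $D$ and $\bbC[\tau^{\pm 1}]$ untouched and everything reduces to the Frenkel--Kac relations \eqref{eq:Frenkel-Kac1}--\eqref{eq:Frenkel-Kac3}. The extra care you take with the two summands of $d_s$ and with $c_s$ is exactly the right bookkeeping.
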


\begin{proof}
These are direct consequences of the definition of the action and the relations in $L(\Lambda_0)^{(s)}$.
\end{proof}

\begin{lem}\label{lem:vertex}
We have $\aff^{(t)} (v^{(s)} \otimes 1 \otimes 1) = 0$.
\end{lem}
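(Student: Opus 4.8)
The plan is to reduce the statement to a finite check on generators. Write $v = v^{(s)} \otimes 1 \otimes 1$ and set $\mathcal{K} = \{ x \in \aff^{(t)} : x v = 0 \}$. Since $xv = yv = 0$ forces $[x,y]v = x(yv) - y(xv) = 0$, the subspace $\mathcal{K}$ is automatically a Lie subalgebra. Now $\aff^{(t)} = \frg \otimes \bbC[t^{\pm 1}] \oplus \bbC c_t \oplus \bbC d_t$, and its derived subalgebra $\frg \otimes \bbC[t^{\pm 1}] \oplus \bbC c_t$ is generated as a Lie algebra by the Chevalley generators $e_{i,0}, f_{i,0}$ ($i \in \affI$). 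Hence it suffices to prove
\[
	e_{i,0} v = f_{i,0} v = 0 \ (i \in \affI), \quad c_t v = 0, \quad d_t v = 0,
\]
after which $\mathcal{K}$, being a subalgebra containing all of these elements, must equal $\aff^{(t)}$.

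For $i \in I$ the identities $e_{i,0} v = e_i v = 0$ and $f_{i,0} v = f_i v = 0$ are exactly the relations (\ref{eq:Frenkel-Kac1new}) and (\ref{eq:Frenkel-Kac3new}) of Lemma~\ref{lem:highest}. The genuinely new cases are $e_{0,0} = f_{\theta} \otimes t$ and $f_{0,0} = e_{\theta} \otimes t^{-1}$, and here I would read off the module structure of Theorem~\ref{thm:ISW-E}: these two elements are the $u^0$-coefficients of $f_{\theta}(u) \otimes \Delta_1(u) \otimes \tau$ and $e_{\theta}(u) \otimes \Delta_{-1}(u) \otimes \tau^{-1}$ respectively. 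The key observation is that $v^{(s)}$ is a highest weight vector of $L(\Lambda_0)^{(s)}$, so it is annihilated by the positive nilpotent subalgebra of $\aff^{(s)}$, and since $\langle h_i, \Lambda_0 \rangle = 0$ for $i \in I$ it spans a trivial $\frg$-submodule and is annihilated by $\frg$. Together these give $(f_{\theta} \otimes s^k) v^{(s)} = 0$ and $(e_{\theta} \otimes s^k) v^{(s)} = 0$ for all $k \geq 0$, so both $f_{\theta}(u) v^{(s)}$ and $e_{\theta}(u) v^{(s)}$ involve only strictly positive powers of $u$. As $\Delta_{\pm 1}(u) \cdot 1$ involves only nonnegative powers of $u$ and $\tau^{\pm 1}$ is $u$-independent, the $u^0$-coefficient of each product vanishes, yielding $e_{0,0} v = f_{0,0} v = 0$.

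Finally, $c_t = s^0 t^{-1} dt$ falls into the $k \geq 0$ branch of the assignment $s^k t^{-1} dt \mapsto 0$ in Theorem~\ref{thm:ISW-E}, so $c_t v = 0$; and $d_t$ acts as $\id \otimes \id \otimes d^{(\tau)}$, which kills $v^{(s)} \otimes 1 \otimes 1$ because $1 \in \bbC[\tau^{\pm 1}]$ has $\tau$-degree $0$. This completes all the required checks, so $\mathcal{K} = \aff^{(t)}$. I expect the main obstacle to be precisely the series computation for $e_{0,0}$ and $f_{0,0}$: one must use the highest-weight annihilation properties of $v^{(s)}$ to see that only strictly positive powers of $u$ survive in $f_{\theta}(u) v^{(s)}$ and $e_{\theta}(u) v^{(s)}$, so that multiplying by the nonnegatively graded series $\Delta_{\pm 1}(u)$ leaves no constant term in $u$; everything else is a direct reading of the module structure.
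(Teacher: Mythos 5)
Your proof is correct and follows essentially the same route as the paper: the paper likewise observes $\frg(v^{(s)}\otimes 1\otimes 1)=0$, computes the actions of $e_0=f_\theta\otimes t$ and $f_0=e_\theta\otimes t^{-1}$ as the $u^0$-coefficients of $f_\theta(u)\otimes\Delta_1(u)\otimes\tau$ and $e_\theta(u)\otimes\Delta_{-1}(u)\otimes\tau^{-1}$ using $(f_\theta\otimes s^k)v^{(s)}=(e_\theta\otimes s^k)v^{(s)}=0$ for $k\geq 0$, and reads off $c_t,d_t$ from the definition of the action. Your only addition is to make explicit the (implicit in the paper) observation that the annihilator of $v^{(s)}\otimes 1\otimes 1$ is a Lie subalgebra, so checking generators suffices.
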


\begin{proof}
We have $\frg (v^{(s)} \otimes 1 \otimes 1) = (\frg v^{(s)}) \otimes 1 \otimes 1 = 0$.
To see the action of $e_0 = f_{\theta} \otimes t$, consider the assignment
\[
	\sum_{k \in \bbZ} (f_{\theta} \otimes s^k t) u^{-k} \mapsto f_{\theta} (u) \otimes \Delta_1(u) \otimes \tau.
\]
Expand $\Delta_1(u) = \sum_{k \geq 0} \Delta_1^{(-k)} u^k$.
Then the action of $e_0 = f_{\theta} \otimes t$ is given by $\sum_{k \geq 0} (f_{\theta}\otimes s^k) \otimes \Delta_1^{(-k)} \otimes \tau$.
Since we have $(f_{\theta}\otimes s^k) v^{(s)} = 0$ for $k \geq 0$, we have $e_0(v^{(s)} \otimes 1 \otimes 1)=0$.
Similarly the action of $f_0 = e_{\theta} \otimes t^{-1}$ is given by $\sum_{k \geq 0} (e_{\theta}\otimes s^k) \otimes \Delta_{-1}^{(-k)} \otimes \tau^{-1}$, hence it acts on $v^{(s)} \otimes 1 \otimes 1$ by $0$.
We have $c_t (v^{(s)} \otimes 1 \otimes 1) = 0$ and $d_t (v^{(s)} \otimes 1 \otimes 1) = 0$ by the definition of the action of $c_t$ and $d_t$.
\end{proof}

\subsection{Isomorphisms}

We define a $\tor$-module $\bbV$ by the pull-back of $\bbV(0)$ via the automorphism $S^{-1}$, that is, $\bbV = (S^{-1})^*\bbV(0)$.
Denote the vector of $\bbV$ corresponding to $v^{(s)} \otimes 1 \otimes 1 \in \bbV(0)$ by $\bfv$.

The action of $c(1,0)$ on $\bbV$ corresponds to $\tau^{-1}$ on $\bbV(0)$ via $S^{-1}$ since $S^{-1}(c(1,0)) = c(0,-1)$.
We regard $\bbV$ as a module over $A(\Lambda_0)=\bbC[z^{\pm 1}]$ via $z \mapsto c(1,0)$ and then $\bbV$ becomes a free $A(\Lambda_0)$-module by Proposition~\ref{prop:freeness_vertex_rep}.
We put $\bbV_a = \bbV \otimes_{A(\Lambda_0)} \bbC_a$ for $a \in \bbC^{\times}$.
This $\bbV_a$ is a $\tor'$-module.
The character of $\bbV_a$ is given as follows.

\begin{prop}\label{prop:character_V}
We have $\ch_p \bbV_a = \ch_p L(\Lambda_0) \displaystyle\prod_{n > 0} \dfrac{1}{1-p^n}$.
\end{prop}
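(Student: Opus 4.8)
The plan is to compute $\ch_p\bbV_a$ by unwinding the definition $\bbV = (S^{-1})^*\bbV(0)$ together with the explicit tensor-product description $\bbV(0) = L(\Lambda_0)^{(s)} \otimes D \otimes \bbC[\tau^{\pm 1}]$ from Theorem~\ref{thm:ISW-E}.

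First I would determine how $S^{-1}$ transforms the subalgebra $\aff^{(t)}$ and its Cartan data. Since $S^{-1}$ is induced by $s \mapsto t^{-1}$, $t \mapsto s$, it fixes $\frg \otimes 1$ (hence $\frh$) pointwise and sends $x \otimes t^k \mapsto x \otimes s^k$ and $c_t \mapsto c_s$. A short computation using $[d_t, x \otimes s^k t^l] = l\,(x\otimes s^k t^l)$ shows $S^{-1}(d_t) = d_s$. Consequently the $\aff^{(t)}$-module structure on $\bbV$ is precisely the $\aff^{(s)}$-module structure on $\bbV(0)$, under the identifications $\frh \leftrightarrow \frh$, $c_t \leftrightarrow c_s$, $d_t \leftrightarrow d_s$, and $\frg \otimes t^k \leftrightarrow \frg \otimes s^k$. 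In particular the $\frh$-weight and the $d_t$-grading of a vector in $\bbV$ (the $p$-character records the latter via $m = -\langle d_t, \cdot\rangle$) are computed from the $\frh$-weight and the $d_s$-grading of the corresponding vector in $\bbV(0)$.

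Next I would read off the $\aff^{(s)}$-action on $\bbV(0)$ from Theorem~\ref{thm:ISW-E}: the elements $x \otimes s^k$ ($x \in \frg$) act only on the first factor $L(\Lambda_0)^{(s)}$, $c_s$ acts by the scalar $1$, and $d_s$ acts as $d_s \otimes \id \otimes \id + \id \otimes d^{(D)} \otimes \id$, while $\bbC[\tau^{\pm 1}]$ carries no $\aff^{(s)}$-action. Thus $D \otimes \bbC[\tau^{\pm 1}]$ is a graded multiplicity space and the $\frg \otimes \bbC[s^{\pm 1}]$-action decouples from the $d_s$-grading coming from $D$. Since $z = c(1,0)$ corresponds to $\tau^{-1}$ and $\bbV(0)$ is free over $\bbC[\tau^{\pm 1}]$ with coefficient space $L(\Lambda_0)^{(s)} \otimes D$ (Proposition~\ref{prop:freeness_vertex_rep} in the ADE case, and by construction in general), specializing $z = a$ yields an isomorphism $\bbV_a \cong L(\Lambda_0)^{(s)} \otimes D$ of $\aff^{(t)}$-modules. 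The first factor contributes $\ch_p L(\Lambda_0)^{(s)} = \ch_p L(\Lambda_0)$, the two affine modules being abstractly isomorphic with matching Cartan data. For the second factor I would check that $\delta(-n)$, the action of $c(-n,0)$, has $\frh$-weight $0$ and $d^{(D)}$-degree $-n$, hence $d_t$-degree $-n$ on $\bbV$, so it contributes $p^n$; therefore the polynomial algebra $D = \bbC[\delta(-1), \delta(-2), \ldots]$ has graded dimension $\prod_{n>0}(1-p^n)^{-1}$. As the gradings add and the weights decouple, the character factorizes into $\ch_p L(\Lambda_0)\prod_{n>0}(1-p^n)^{-1}$, as claimed.

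The main obstacle is purely bookkeeping: correctly tracking how $S^{-1}$ interchanges the two families of grading/degree operators (especially confirming $S^{-1}(d_t)=d_s$ and that $\frh$-weights are preserved), and verifying that the $p$-character, defined through the $\aff^{(t)}$-structure, is governed by the $d_s$-grading on $\bbV(0)$. Once the identification $\bbV_a \cong L(\Lambda_0)^{(s)} \otimes D$ of $\aff^{(t)}$-modules is established with the correct degree assignments, the character computation is immediate.
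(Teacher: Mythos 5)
Your argument is correct and is exactly the paper's (the paper's proof is the single sentence that the assertion ``obviously follows from the construction of the action of $\tor$ on $\bbV(0) = L(\Lambda_0)^{(s)} \otimes D \otimes \bbC[\tau^{\pm 1}]$''); you have simply made the bookkeeping explicit, including the correct identifications $S^{-1}(c_t)=c_s$ and $S^{-1}(d_t)=d_s$ and the graded dimension $\prod_{n>0}(1-p^n)^{-1}$ of $D$.
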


\begin{proof}
The assertion obviously follows from the construction of the action of $\tor$ on $\bbV(0) = L(\Lambda_0)^{(s)} \otimes D \otimes \bbC[\tau^{\pm 1}]$. 
\end{proof}

Let us study relation between the level one global Weyl module $\glob(\Lambda_0)$ and $\bbV$. 

\begin{lem}\label{lem:relation}
We have 
\[
	h_{i,k} \bfv = \begin{cases} 0 & \text{if } i \in I, \\ z^k \bfv & \text{if } i=0 \end{cases}
\]
for any $k \in \bbZ$.	
\end{lem}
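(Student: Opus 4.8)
The plan is to unwind the definition $\bbV=(S^{-1})^*\bbV(0)$, under which $h_{i,k}$ acts on $\bfv$ by the operator $S^{-1}(h_{i,k})$ acting on $v^{(s)}\otimes 1\otimes 1\in\bbV(0)$. So the first step is to identify $S^{-1}(h_{i,k})$ as an element of $\tor$. Using the presentation isomorphism we have $h_{i,k}=h_i\otimes s^k$ for $i\in I$ and $h_{0,k}=-h_\theta\otimes s^k+s^kt^{-1}dt$. Since $S^{-1}$ is induced by $s\mapsto t^{-1}$, $t\mapsto s$ and satisfies $S^{-1}(c(k,l))=c(l,-k)$, I would compute $S^{-1}(h_i\otimes s^k)=h_i\otimes t^{-k}$ and, recalling $s^kt^{-1}dt=c(k,0)$ (resp.\ $c_t$ when $k=0$, with $S^{-1}(c_t)=c_s$), $S^{-1}(h_{0,k})=-h_\theta\otimes t^{-k}+c(0,-k)$.

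Next I would evaluate these operators on $v^{(s)}\otimes 1\otimes 1$. For $i\in I$ the element $h_i\otimes t^{-k}$ lies in $\frg\otimes\bbC[t^{\pm1}]\subset\aff^{(t)}$, so Lemma~\ref{lem:vertex} gives $h_{i,k}\bfv=0$. For $i=0$ the term $-h_\theta\otimes t^{-k}$ again lies in $\aff^{(t)}$ and is killed by Lemma~\ref{lem:vertex}, so only $c(0,-k)$ survives. Reading off the constant term in $u$ of the assignment $\sum_k(s^{k-1}t^lds)u^{-k}\mapsto c_s\otimes\Delta_l(u)\otimes\tau^l$ from Theorem~\ref{thm:ISW-E} shows that $c(0,l)=s^{-1}t^lds$ acts as $c_s\otimes 1\otimes\tau^l$, i.e.\ as multiplication by $\tau^l$ because $c_s$ acts as the identity on $L(\Lambda_0)^{(s)}$. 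Hence $h_{0,k}\bfv=v^{(s)}\otimes 1\otimes\tau^{-k}$.

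It remains to match this with the $A(\Lambda_0)=\bbC[z^{\pm1}]$-action $z\mapsto c(1,0)$. As already noted, $c(1,0)$ acts on $\bbV$ as $\tau^{-1}$ on $\bbV(0)$ via $S^{-1}(c(1,0))=c(0,-1)$, so $z^k\bfv=v^{(s)}\otimes 1\otimes\tau^{-k}$, which is exactly $h_{0,k}\bfv$; the case $k=0$ reduces to the level-one relation $c_s v=v$. I expect the whole argument to be routine bookkeeping: the only delicate points are keeping the twist $S^{-1}$ straight and correctly reading off from Theorem~\ref{thm:ISW-E} that $c(0,-k)$ acts as multiplication by $\tau^{-k}$ on the last tensor factor, which must then be reconciled with the normalization $z\mapsto c(1,0)$ of the $A(\Lambda_0)$-module structure.
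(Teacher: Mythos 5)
Your proposal is correct and follows essentially the same route as the paper: compute $S^{-1}(h_{i,k})$ (getting $h_i\otimes t^{-k}$ for $i\in I$ and $s^{-1}t^{-k}ds-h_\theta\otimes t^{-k}$ for $i=0$), kill the $\aff^{(t)}$-part by Lemma~\ref{lem:vertex}, and identify the action of $c(0,-k)$ with $\tau^{-k}$, which corresponds to $z^k$ under the normalization $z\mapsto c(1,0)$. The paper's proof is just a terser version of the same bookkeeping.
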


\begin{proof}
We have 
\[
	S^{-1}(h_{i,k}) = \begin{cases} h_i \otimes t^{-k} & \text{if } i \in I, \\ s^{-1} t^{-k} ds - h_{\theta} \otimes t^{-k} & \text{if } i=0. \end{cases}
\]
By Lemma~\ref{lem:vertex}, we have $(h_i \otimes t^{-k}) (v^{(s)} \otimes 1 \otimes 1) = (h_{\theta} \otimes t^{-k}) (v^{(s)} \otimes 1 \otimes 1) =0$.
Since we have $(s^{-1} t^{-k} ds) (v^{(s)} \otimes 1 \otimes 1) = \tau^{-k} (v^{(s)} \otimes 1 \otimes 1)$ and $\tau^{-1}$ corresponds to $z$, the assertion is proved.
\end{proof}

\begin{lem}\label{lem:surjection}
We have a surjective homomorphism $\glob(\Lambda_0) \to \bbV$ of modules over both $\tor$ and $A(\Lambda_0)$.
\end{lem}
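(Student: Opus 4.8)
The plan is to invoke the universal property of the global Weyl module. Since $\glob(\Lambda_0)$ is generated by $v_{\Lambda_0}$ subject to the relations in its definition, a $\tor$-module homomorphism $\glob(\Lambda_0) \to \bbV$ with $v_{\Lambda_0} \mapsto \bfv$ exists as soon as $\bfv$ satisfies those relations, and it will be surjective once we verify $\bbV = U(\tor)\bfv$. Throughout I use that $x \in \tor$ acts on $\bbV = (S^{-1})^*\bbV(0)$ by the operator $S^{-1}(x)$ on $\bbV(0)$, together with $S^{-1}(c_s) = -c_t$, $S^{-1}(c_t) = c_s$, $S^{-1}(d_s) = -d_t$, and $S^{-1}(d_t) = d_s$.

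First I would check the defining relations on $\bfv$. The weight conditions $h_{i,k}\bfv = 0$ ($i \in I$) come from Lemma~\ref{lem:relation}, and the values of $h \in \affh^{(t)}$ follow from the formulas above together with Lemma~\ref{lem:vertex} and Lemma~\ref{lem:highest}: one gets $c_s\bfv = -c_t(v^{(s)}\otimes 1\otimes 1) = 0$, $d_s\bfv = -d_t(v^{(s)}\otimes 1\otimes 1) = 0$, while $c_t\bfv = c_s(v^{(s)}\otimes 1\otimes 1) = \bfv$ and $d_t\bfv = d_s(v^{(s)}\otimes 1\otimes 1) = 0$, matching $\langle c_t,\Lambda_0\rangle = 1$ and $\langle d_t,\Lambda_0\rangle = 0$. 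The relations $f_i\bfv = 0$ and $e_{i,k}\bfv = 0$ for $i \in I$ follow from Lemma~\ref{lem:vertex}, since $S^{-1}(f_{i,0}) = f_i$ and $S^{-1}(e_{i,k}) = e_i \otimes t^{-k}$ lie in $\aff^{(t)}$; and $f_0^2\bfv = 0$ holds because $S^{-1}(f_0) = e_\theta \otimes s^{-1}$, so this is exactly the relation $(e_\theta\otimes s^{-1})^2(v^{(s)}\otimes 1\otimes 1)=0$ of Lemma~\ref{lem:highest}. The one genuinely new computation, which I expect to be the main obstacle, is $e_{0,k}\bfv = 0$ for $k \neq 0$: here $S^{-1}(e_{0,k}) = f_\theta \otimes s t^{-k}$ lies in neither $\aff^{(s)}$ nor $\aff^{(t)}$, so I would extract it as the coefficient of $u^{-1}$ in $f_\theta(u)\otimes\Delta_{-k}(u)\otimes\tau^{-k}$ via Theorem~\ref{thm:ISW-E}. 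Since $(f_\theta\otimes s^j)v^{(s)} = 0$ for every $j \geq 0$, the series $f_\theta(u)(v^{(s)})$ involves only strictly positive powers of $u$, whereas $\Delta_{-k}(u)$ involves only nonnegative powers; hence the $u^{-1}$-coefficient vanishes.

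Next I would prove surjectivity. As $S^{-1}$ is an automorphism, the subspace $U(\tor)\bfv$ of $\bbV$ coincides with $U(\tor)(v^{(s)}\otimes 1\otimes 1)$ inside $\bbV(0)$, so it suffices to show the latter equals $\bbV(0) = L(\Lambda_0)^{(s)}\otimes D\otimes\bbC[\tau^{\pm 1}]$. The currents $\frg\otimes\bbC[s^{\pm 1}] \subset \aff^{(s)}$ act on the first tensor factor alone and generate $L(\Lambda_0)^{(s)}$ from $v^{(s)}$, giving $L(\Lambda_0)^{(s)}\otimes 1\otimes 1$. The central elements $c(0,l)$ act as $\tau^l$ (using that $c_s$ acts by the level $1$), and the $c(k,0)$ with $k < 0$ act as $\delta(k)$ on the $D$-factor; applying products of these to $L(\Lambda_0)^{(s)}\otimes 1\otimes 1$ produces all vectors $w\otimes\delta(k_1)\cdots\delta(k_n)\otimes\tau^l$, which span $\bbV(0)$ by Proposition~\ref{prop:freeness_vertex_rep}.

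Finally, for $A(\Lambda_0)$-linearity I would observe that on both $\glob(\Lambda_0)$ and $\bbV$ the generator $z$ of $A(\Lambda_0) = \bbC[z^{\pm 1}]$ acts through the single element $c(1,0) \in \tor$, and $z^{-1}$ through $c(-1,0)$. Since the homomorphism is $\tor$-equivariant it commutes with these operators, hence with the whole $\bbC[z^{\pm 1}]$-action, which completes the proof.
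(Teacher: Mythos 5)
Your proof is correct and follows the same overall strategy as the paper: verify that $\bfv$ satisfies the defining relations of $\glob(\Lambda_0)$ by pulling back the relations of Lemma~\ref{lem:highest} and Lemma~\ref{lem:vertex} through $S^{-1}$. The one step where you genuinely diverge is the relation $e_{0,k}\bfv=0$ for $k\neq 0$: you extract $S^{-1}(e_{0,k})=f_{\theta}\otimes s\,t^{-k}$ as the coefficient of $u^{-1}$ in $f_{\theta}(u)\otimes\Delta_{-k}(u)\otimes\tau^{-k}$ and observe that $f_{\theta}(u)v^{(s)}$ has only positive powers of $u$ while $\Delta_{-k}(u)$ has only nonnegative ones — a correct direct computation in the spirit of Lemma~\ref{lem:vertex}. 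The paper instead deduces $e_{i,k}\bfv=0$ from $e_{i,0}\bfv=0$ and Lemma~\ref{lem:relation} via the bracket $[h_{i,k},e_{i,0}]=2e_{i,k}$ from the presentation of $\tor$; that route is shorter and avoids any further contact with the vertex-operator formulas, whereas yours is self-contained at the level of the construction in Theorem~\ref{thm:ISW-E}. You also spell out two points the paper leaves implicit — that $\bbV=U(\tor)\bfv$ (using that $\frg\otimes\bbC[s^{\pm1}]$ generates $L(\Lambda_0)^{(s)}$ from $v^{(s)}$ and that the central elements $c(0,l)$ and $c(k,0)$, $k<0$, sweep out the $\bbC[\tau^{\pm1}]$ and $D$ factors), and that $A(\Lambda_0)$-linearity is automatic because $z^{\pm1}$ acts through the central elements $c(\pm1,0)$ — both of which are accurate and make the argument more complete than the printed proof.
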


\begin{proof}
The equalities (\ref{eq:Frenkel-Kac1new}), (\ref{eq:Frenkel-Kac2new}), (\ref{eq:Frenkel-Kac3new}) are equivalent to
 \begin{gather*}
	e_i \bfv = 0 \ (i \in \affI), \\
	c_t \bfv = \bfv, \quad h_i \bfv = 0 \ (i \in I),\quad d_t \bfv = 0,\\
	f_0^2 \bfv = 0,\quad f_i \bfv = 0 \ (i \in I).
\end{gather*}
Moreover we have 
\begin{align*}
	c_s \bfv &= S^{-1}(c_s)(v^{(s)} \otimes 1 \otimes 1) = -c_t (v^{(s)} \otimes 1 \otimes 1) = 0,\\
	d_s \bfv &= S^{-1}(d_s)(v^{(s)} \otimes 1 \otimes 1) = -d_t (v^{(s)} \otimes 1 \otimes 1) = 0
\end{align*}
by Lemma~\ref{lem:vertex}.
We need to check $e_{i,k} \bfv = 0$ for $i \in \affI$ and $k \in \bbZ$.
This follows from $e_i \bfv = 0$ and Lemma~\ref{lem:relation}.
\end{proof}

By Lemma~\ref{lem:surjection}, we have a surjective $\tor'$-homomorphism $\loc(\Lambda_0,a) \to \bbV_a$ for every $a \in \bbC^{\times}$. 
Hence we have inequalities of the characters
\begin{equation}
	\ch_p \loc^+(\Lambda_0,a) \geq \ch_p \loc(\Lambda_0,a) \geq \ch_p \bbV_a \label{eq:inequality}
\end{equation}
by Proposition~\ref{prop:character}.

\begin{thm}\label{thm:main}
We have isomorphisms
\[
	\glob(\Lambda_0) \stackrel{\cong}{\longrightarrow} \bbV, \qquad \loc(\Lambda_0,a) \stackrel{\cong}{\longrightarrow} \bbV_a
\]
of modules over $\tor$ and $\tor'$ respectively.
\end{thm}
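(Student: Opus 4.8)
The plan is to prove the global isomorphism first and then deduce the local one by base change. By Lemma~\ref{lem:surjection} we already possess a surjective $\tor$-homomorphism $\glob(\Lambda_0) \to \bbV$ that is simultaneously $A(\Lambda_0)$-linear, where $A(\Lambda_0) = \bbC[z^{\pm 1}]$ acts via $z \mapsto c(1,0)$. Since $\bbV$ is a \emph{free} $A(\Lambda_0)$-module by Proposition~\ref{prop:freeness_vertex_rep}, it suffices to show this surjection is an isomorphism, and the natural device for that is to compare characters after specializing $z$ to a point.

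\textbf{Reduction to a character count.} First I would fix $a \in \bbC^{\times}$ and apply the base change $- \otimes_{A(\Lambda_0)} \bbC_a$ to the surjection of Lemma~\ref{lem:surjection}, obtaining a surjective $\tor'$-homomorphism $\loc(\Lambda_0,a) \to \bbV_a$. The key point is the chain of inequalities \eqref{eq:inequality}, namely
\[
	\ch_p \loc^+(\Lambda_0,a) \geq \ch_p \loc(\Lambda_0,a) \geq \ch_p \bbV_a.
\]
On the one hand, Proposition~\ref{prop:character_V} computes $\ch_p \bbV_a = \ch_p L(\Lambda_0) \prod_{n>0}(1-p^n)^{-1}$. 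On the other hand, combining Proposition~\ref{prop:upper_bound} (the upper bound $\ch_{p,q} W(\Lambda_0) \leq \ch_p L(\Lambda_0)\prod_{n>0}(1-p^nq)^{-1}$, specialized at $q=1$) with Proposition~\ref{prop:independent} gives $\ch_p \loc^+(\Lambda_0,a) \leq \ch_p L(\Lambda_0)\prod_{n>0}(1-p^n)^{-1}$. Thus the two outer terms of \eqref{eq:inequality} agree, forcing equality throughout; in particular $\ch_p \loc(\Lambda_0,a) = \ch_p \bbV_a$. A surjection of vector spaces that preserves every (finite-dimensional, by Proposition~\ref{prop:weight}) weight space and is an equality of characters must be an isomorphism, so $\loc(\Lambda_0,a) \xrightarrow{\cong} \bbV_a$ for all $a \in \bbC^{\times}$.

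\textbf{From local to global.} To promote the local isomorphisms back to the global statement, I would use freeness and the pointwise information. The surjection $\glob(\Lambda_0) \to \bbV$ has some kernel $K$, an $A(\Lambda_0)$-submodule; by Proposition~\ref{prop:weight}(i) each weight space $\glob(\Lambda_0)_{\Lambda_0-\beta}$ is finitely generated over the Noetherian ring $A(\Lambda_0) = \bbC[z^{\pm 1}]$, hence so is $K_{\Lambda_0-\beta}$. Having shown that the specialization $K \otimes_{A(\Lambda_0)} \bbC_a$ vanishes in each weight space for every $a \in \bbC^{\times}$—which follows because $\bbV$ is $A(\Lambda_0)$-free (so $\mathrm{Tor}_1^{A(\Lambda_0)}(\bbV,\bbC_a)=0$, making specialization of the short exact sequence $0 \to K \to \glob(\Lambda_0) \to \bbV \to 0$ exact) together with the local isomorphism above—I conclude that the finitely generated $\bbC[z^{\pm 1}]$-module $K_{\Lambda_0-\beta}$ has empty support, hence is zero. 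Therefore $K=0$ and $\glob(\Lambda_0) \xrightarrow{\cong} \bbV$.

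\textbf{Main obstacle.} The genuinely nontrivial input is the matching of the two characters, and this has already been front-loaded into Propositions~\ref{prop:upper_bound}, \ref{prop:independent}, and \ref{prop:character_V}; granting those, the argument here is a squeeze. The one step requiring care is the descent from local to global: one must justify that vanishing of every specialization $K\otimes_{A(\Lambda_0)}\bbC_a$ implies $K=0$, which rests on the finite generation from Proposition~\ref{prop:weight} and the $A(\Lambda_0)$-freeness of $\bbV$ from Proposition~\ref{prop:freeness_vertex_rep} to keep the specialization exact. I expect the verification that the surjection is $A(\Lambda_0)$-linear (already recorded in Lemma~\ref{lem:surjection}) and the flatness bookkeeping to be the only points needing explicit attention; everything else is formal.
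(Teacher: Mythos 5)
Your proposal is correct and follows essentially the same route as the paper: the local isomorphism comes from squeezing $\ch_p \loc(\Lambda_0,a)$ between $\ch_p \bbV_a$ and the upper bound of Proposition~\ref{prop:upper_bound} combined with Propositions~\ref{prop:independent} and \ref{prop:character_V}, and the global statement then follows from $\Ker\otimes_{A(\Lambda_0)}\bbC_a=0$ plus Nakayama. The only cosmetic difference is that you invoke $\mathrm{Tor}_1$-vanishing from freeness of $\bbV$ to keep the specialization exact, whereas the paper takes an explicit $A(\Lambda_0)$-module splitting of the short exact sequence; these are interchangeable.
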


\begin{proof}
First we prove the isomorphism $\loc(\Lambda_0,a) \cong \bbV_a$.
We have
\begin{equation}
	\ch_p \loc^+(\Lambda_0,a) = \ch_p W(\Lambda_0) \leq \ch_p L(\Lambda_0) \prod_{n>0} \dfrac{1}{1-p^n} = \ch_p \bbV_a \label{eq:inequality2}
\end{equation}
by Proposition~\ref{prop:independent}, \ref{prop:upper_bound}, \ref{prop:character_V}.
Then the inequalities (\ref{eq:inequality}) and (\ref{eq:inequality2}) imply $\ch_p \loc(\Lambda_0,a) = \ch_p \bbV_a$.
This shows that the surjective homomorphism $\loc(\Lambda_0,a) \to \bbV_a$ is an isomorphism for every $a \in \bbC^{\times}$.
Next we prove the isomorphism $\glob(\Lambda_0) \cong \bbV$.
Since $\bbV$ is a free $A(\Lambda_0)$-module, we can take a splitting of the exact sequence
\[
	0 \to \Ker \to \glob(\Lambda_0) \to \bbV \to 0
\]
of $A(\Lambda_0)$-modules.
The isomorphism $\loc(\Lambda_0,a) \cong \bbV_a$ implies $\Ker \otimes_{A(\Lambda_0)} \bbC_a = 0$ for every $a \in \bbC^{\times}$.
Then by Nakayama's lemma, we see that $\Ker = 0$ and obtain the isomorphism $\glob(\Lambda_0) \cong \bbV$.
\end{proof}

\begin{cor}\label{cor:character}
We have
\[
	\ch_{p} \loc(\Lambda_0,a) = \ch_{p} \loc^+(\Lambda_0,a) = \ch_p L(\Lambda_0) \left( \prod_{n>0} \dfrac{1}{1-p^n} \right)
\]
for $a \in \bbC^{\times}$ and
\[
	\ch_{p,q} W(\Lambda_0) = \ch_p L(\Lambda_0) \left( \prod_{n>0} \dfrac{1}{1-p^n q} \right).
\]
\end{cor}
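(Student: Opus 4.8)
The plan is to deduce the corollary formally from Theorem~\ref{thm:main} and the upper bound of Proposition~\ref{prop:upper_bound}, so that essentially no new computation is required. First I would dispose of the ungraded $p$-characters. Theorem~\ref{thm:main} gives $\loc(\Lambda_0,a)\cong\bbV_a$ as $\tor'$-modules, so Proposition~\ref{prop:character_V} yields $\ch_p\loc(\Lambda_0,a)=\ch_p\bbV_a=\ch_p L(\Lambda_0)\prod_{n>0}(1-p^n)^{-1}$. To pin down $\ch_p\loc^+(\Lambda_0,a)$ as well, I would combine the inequalities already recorded in the proof of Theorem~\ref{thm:main} into the chain
\[
	\ch_p\bbV_a \leq \ch_p\loc(\Lambda_0,a) \leq \ch_p\loc^+(\Lambda_0,a) = \ch_p W(\Lambda_0) \leq \ch_p\bbV_a,
\]
where the first two inequalities are (\ref{eq:inequality}) and the equality together with the last inequality is (\ref{eq:inequality2}) (which in turn uses Proposition~\ref{prop:independent}). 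Since the two extreme terms coincide, every term equals $\ch_p\bbV_a$, which is exactly the first displayed equality of the corollary.

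For the graded character I would start from
\[
	\ch_{p,q} W(\Lambda_0) \leq \ch_p L(\Lambda_0)\prod_{n>0}\dfrac{1}{1-p^n q}
\]
in Proposition~\ref{prop:upper_bound} and upgrade it to an equality by specializing the grading variable. Setting $q=1$ forgets the $s$-grading, so the left-hand side becomes the ungraded character $\ch_p W(\Lambda_0)$ and the right-hand side becomes $\ch_p L(\Lambda_0)\prod_{n>0}(1-p^n)^{-1}$; by the first part these two specializations already agree. Thus the coefficient-wise inequality of $(p,q)$-series becomes an equality after summing over the $q$-degree.

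The one point that needs care — and really the only genuine obstacle — is passing from equality of the $q=1$ specializations back to equality of the full $(p,q)$-characters, for which I would invoke a finiteness argument. Writing the two sides as $\sum f_{\lambda,m,n}e^{\lambda}p^m q^n$ and $\sum g_{\lambda,m,n}e^{\lambda}p^m q^n$ with $f_{\lambda,m,n}\leq g_{\lambda,m,n}$ for all $\lambda,m,n$, I would observe that for each fixed $(\lambda,m)$ the total $\sum_n f_{\lambda,m,n}=\dim W(\Lambda_0)_{\lambda-m\delta}$ is finite by Proposition~\ref{prop:weight} (ii), so only finitely many graded pieces of each weight space are nonzero. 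The $q=1$ equality says $\sum_n f_{\lambda,m,n}=\sum_n g_{\lambda,m,n}$, and a finite nonnegative sum with term-wise domination and equal totals forces $f_{\lambda,m,n}=g_{\lambda,m,n}$ for every $n$. This yields $\ch_{p,q}W(\Lambda_0)=\ch_p L(\Lambda_0)\prod_{n>0}(1-p^n q)^{-1}$ and completes the proof.
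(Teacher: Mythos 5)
Your proposal is correct and follows essentially the same route as the paper: the $p$-character equalities are obtained by squeezing the chain of inequalities (\ref{eq:inequality}) and (\ref{eq:inequality2}) already assembled in the proof of Theorem~\ref{thm:main}, and the $(p,q)$-character equality is deduced from the $p$-character equality together with the coefficient-wise upper bound of Proposition~\ref{prop:upper_bound}. The paper leaves the final ``termwise domination plus equal finite totals forces termwise equality'' step implicit, whereas you spell it out correctly using the finite-dimensionality of weight spaces from Proposition~\ref{prop:weight}.
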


\begin{proof}
The equalities for the $p$-characters are verified in the proof of Theorem~\ref{thm:main}.
The equality for the $(p,q)$-character follows from that for the $p$-character and Proposition~\ref{prop:upper_bound}. 
\end{proof}

\newcommand{\etalchar}[1]{$^{#1}$}
\def\cprime{$'$} \def\cprime{$'$} \def\cprime{$'$} \def\cprime{$'$}
\providecommand{\bysame}{\leavevmode\hbox to3em{\hrulefill}\thinspace}
\providecommand{\MR}{\relax\ifhmode\unskip\space\fi MR }
\providecommand{\MRhref}[2]{%
  \href{http://www.ams.org/mathscinet-getitem?mr=#1}{#2}
}
\providecommand{\href}[2]{#2}

\end{document}